\DeclarePairedDelimiter{\abs}{\lvert}{\rvert} 
\newcommand{\C}{\mathbb{C}}
\newcommand{\R}{\mathbb{R}}
\newcommand{\Q}{\mathbb{Q}}
\newcommand{\Z}{\mathbb{Z}}
\DeclareMathOperator{\SO}{SO}
\DeclareMathOperator{\SU}{SU}
\def\C{\mathbb{C}}
\def\H{\mathbb{H}}
\def\Im{\mathrm{Im}}
\def\PSL{\mathbf{PSL}}
\def\R{\mathbb{R}}
\def\Re{\mathrm{Re}}
\def\S{\mathbb{S}}
\def\SL{\mathbf{SL}}
\def\SO{\mathbf{SO}}
\def\SU{\mathbf{SU}}
\def\Z{\mathbb{Z}}
\titleformat{\section}[hang]
{\normalfont\filright\large}{\thesection. }{0pt}
{\upshape\bfseries}
\titleformat{\subsection}[hang]
{\itshape}{\thesubsection \ - }{0pt}
{}
\theoremstyle{plain}
\newtheorem{theo}{Theorem}[section]
\newtheorem{prop}[theo]{Proposition}
\newtheorem{lemm}[theo]{Lemma}
\newtheorem{rem}[theo]{Remark}
\theoremstyle{remark}
\theoremstyle{definition}
\newtheorem{defi}[theo]{Definition}
\newtheorem*{Example-non}{Example}
\newtheorem*{Lemma-non}{Lemma}
\newtheorem*{Proposition-non}{Proposition}
\newtheorem*{Corollary-non}{Corollary}
\newtheorem*{Definition-non}{Definition}
\providecommand{\abs}[1]{\lvert#1\rvert}
\newcommand*\rfrac[2]{{}^{#1}\!/_{#2}}
\title{A note on Mellin transform, Eisenstein Series and distribution $d\epsilon_{it}$ on $\PSL(2,\Z[i]) \backslash \PSL(2,\C)$}
\author{\small Otto Romero \footnote{Departamento de Matem$\acute{\mbox{a}}$ticas, CINVESTAV, Apdo. Postal 14-470, 07000, CDMX, M\'exico. Email:  \texttt{ottohrg@gmail.com}}}
\date{\today}
\begin{document}

\maketitle

\begin{abstract}
Let  $\PSL(2,\Z[i])  \backslash \PSL(2,\C)$ be the frame bundle of the Picard variety  $\PSL(2,\Z[i]) \backslash \H^3$, $f$ a smooth function with compact support defined on $\PSL(2,\Z[i])  \backslash \PSL(2,\C)$ and  $\mathfrak{M}(f,s)$  the Mellin transform of $f$, in this note we prove in the proposition (\ref{PPr}) that
 \begin{equation}\label{PPO2}
 \mathfrak{M}(f,s)   \, = \,  \sum_{\substack{ l \in \Z \\ l \geq 0 }}  \;    \sum_{\substack{ k,m=-\rfrac{l}{2} \\ k,m \in \tfrac{l}{2} \Z \\ k \equiv m \equiv \rfrac{l}{2} \text{ mod } 1 }}^{\rfrac{l}{2}}   \, (-1)^{m-k} \cdot \mathcal{T}^{\, l}_{kk} (I)
                 \int_{F}    \widehat{f}_{k,m}^{\, l} ( z+\lambda j) \cdot  {E}_{-m,-k}^{\rfrac{l}{2}} (z +\lambda j,s) \; \frac{dx dy d\lambda}{\lambda^3},
\end{equation}
where  $\widehat{f}_{k,m}^{\, l} ( z+\lambda j)$ are the Fourier expansion coefficients of $f$ on the fibers, ${E}_{k,m}^{l}(g,s)$ are the Eisenstein series defined on $\SL(2,\C)$ using the representations of $\SU(2)$ (see formula (\ref{ES})) and $\mathcal{T}_{km}^L$ are a basis of  eigenfunctions for Laplacian defined on  $\SU(2)$ (see formula (\ref{T1})). The previous result is a generalization of the proposition 2.6 in  Sarnak \cite{Sar80} for functions in the unit tangent bundles of a hyperbolic surfaces. Using  (\ref{PPO2}) we can define the micro-local lift $ d\epsilon_{it}$ (to $\SL(2,\C)$)  of distribution $\abs{E \big(z+\lambda j, it \big) }^2  \, dV$ as follows:
\begin{equation}\label{Medida}
  d\epsilon_{it} \, := \,   E \big( z+\lambda j,  it \big)
 \;  \sum_{\substack{ L \in  \mathbb{Z} \\ L \geq 0 }}   \; \sum_{\substack{ k,m=-\rfrac{L}{2} \\  k,m \in \tfrac{l}{2} \Z \\ k \equiv m \equiv \rfrac{L}{2} \text{ mod } 1 }}^{\rfrac{L}{2}}  \, (-1)^{m-k} \cdot \mathcal{T}^{\, L}_{kk} (I)  \cdot  {E}_{-m,-k}^{\rfrac{L}{2}} \big(z+\lambda j, -it\big)  \cdot \overline{\mathcal{T}_{km}^L}.
\end{equation}
The distribution  $d\epsilon_{it}$ is analogous to the distribution $d\epsilon_{\rfrac{1}{2}+it}$ which appears in Zelditch \cite{Zel} for hyperbolic surfaces. We use  ideas from  Luo and Sarnak \cite{LuoSar}, Jakobson \cite{Jak} and Koyama \cite{Koy}  to  calculate  $(f,d\epsilon_{it})$ for  $f$ a cuspidal form (see formula \ref{FC18}) and for $f$ an incomplete  Eisenstein series (see formula \ref{IES25}). We also establish asymptotic estimates when $ t $ tends to $ \infty $ (propositions  (\ref{PFC}) and (\ref{IncEisSeries})).

\bigskip
We conjecture that the new positive distribution $d\epsilon_{it}^F$, constructed with the Friedrichs' symmetrization technique  applied to $ d\epsilon_{it}$, satisfies the same asymptotic estimates that $d\epsilon_{it}$ as in the propositions (\ref{PFC}) and (\ref {IncEisSeries}). This is what happens in the case of $\PSL(2,\Z) \backslash \PSL (2,\R) $, see Jakobson \cite {Jak}. This would imply that if  $\Omega_1$ and $\Omega_2$ ($\text{vol }(\Omega_2) \neq 0$) be  compact Jordan measurable sets in  $\PSL(2,\Z[i]) \backslash \PSL(2,\C)$ then
\begin{equation}\label{QUEPM}
 \lim_{t \rightarrow \infty} \frac{ \int_{\Omega_1} d\epsilon_{it}^F }{ \int_{\Omega_2} d\epsilon_{it}^F }  \, = \, \frac{\text{vol} \, (\Omega_1)}{\text{vol} \, (\Omega_2)}.
\end{equation}

This last assertion is  the quantum ergodicity for Eisenstein series on $\PSL(2,\Z[i]) \backslash \PSL(2,\C)$.

\bigskip
\noindent 2010 \textit{Mathematics Subject Classification}. 58J51, 11M36.

\noindent \textit{Keywords}. Quantum ergodicity, Eisenstein Series.
\end{abstract}

\setcounter{tocdepth}{5}
\tableofcontents

\section*{Introduction}
The idea of integrating an invariant function with an Eisenstein series was introduced by Rankin and Selberg. Let $\Sigma=\PSL(2,\Z) \backslash \H^2$ the modular surface and  $f \in C_0^\infty (\Sigma)$,  the Mellin transform of  $f$ in $s$ can be obtained by integrating $ f $ with the Eisenstein series $ E (z, s) $ associated with $ \PSL (2, \Z) $. If $f \in  C_0^\infty (S \Sigma)$, with  $S \Sigma$ denote the unit tangent bundle to  $\Sigma$,  Sarnak \cite{Sar80} gives a similar formula using the Eisenstein series $ E_n (g, s) $ defined on $ \SL (2, \R) $, see  Kubota \cite{Kub} chapter VI. 
 
 \begin{prop} (Sarnak) \label{PSar} Let  $f \in C^\infty_{0} ( S   \Sigma )$ ($f$  is a smooth complex function with compact support) and now we assume that $\Sigma$ is a hyperbolic surface of finite area with  only one cusp in infinity, $s \in \C$ such that  $\text{Re}(s)>1$. Consider the Fourier expansion of $f$ 
\[
 f(z,\theta)  \, = \, \sum_{ n \in \Z } \widehat{f}_n (z) \,  e^{i n \theta}. 
\] 
Then
\[
 \mathfrak{M}(f,s) \, = \,  \sum_{n=0}^\infty  \int_{D}    \widehat{f}_{n}  ( z ) \cdot  {E}_{2n}  (z,s) \; dw(z).
\]
\end{prop}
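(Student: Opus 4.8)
The plan is to combine Rankin--Selberg unfolding on the frame bundle with a Fourier decomposition along the fibre $K=\SO(2)$, so that the whole statement collapses to the orthogonality of the fibre characters. I work on $G=\SL(2,\R)$ with $\Gamma=\PSL(2,\Z)$, writing a point of $\Gamma\backslash G$ in coordinates $(z,\theta)$, where $z=g\cdot i\in\H^2$ is the base point of $\Sigma$ and $\theta$ the fibre angle, so that right multiplication by $k_\phi\in K$ sends $(z,\theta)$ to $(z,\theta+\phi)$. In these coordinates Kubota's series is of pure weight, $E_{2n}(z,\theta,s)=e^{-in\theta}E_{2n}(z,s)$, while the $\Gamma$-invariance of $f$ forces $\widehat f_n$ to carry the opposite automorphy factor; hence each product $\widehat f_n(z)\,E_{2n}(z,s)$ descends to $\Sigma=\Gamma\backslash\H^2$ and the integrals $\int_D\widehat f_n(z)E_{2n}(z,s)\,dw(z)$ are well defined. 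First I would recall that $\mathfrak{M}(f,s)$ is, by definition, the pairing of $f$ with the Eisenstein series $E(g,s)$ on the bundle; equivalently, by the unfolding identity run in reverse and valid for $\Re(s)>1$ where the series $E(g,s)=\sum_{\gamma\in\Gamma_\infty\backslash\Gamma}\varphi_s(\gamma g)$ converges absolutely, it is the Mellin transform in the cusp variable of the constant term of $f$. Either way the computation reduces to evaluating $\int_{\Gamma\backslash G}f(g)\,E(g,s)\,dg$.

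Next I would insert the $K$-isotypic decomposition of the inducing section, $\varphi_s=\sum_{n\ge 0}c_n\varphi_{s,2n}$, which contains only non-negative weights, so that $E(z,\theta,s)=\sum_{n\ge 0}c_n\,e^{-in\theta}E_{2n}(z,s)$; this one-sidedness (equivalently, the identification $E_{-2n}(z,s)=\overline{E_{2n}(z,\bar s)}$ folding the negative weights onto the positive ones) is exactly what produces the range $n\ge 0$ in the final formula. Splitting the Haar measure as $dg=dw(z)\,d\theta$ over $(\Gamma\backslash\H^2)\times K$ and substituting the Fourier expansion of $f$, the integral becomes
\[
\mathfrak{M}(f,s)=\sum_{n\ge 0}c_n\int_{\Gamma\backslash\H^2} E_{2n}(z,s)\Big(\int_{K} f(z,\theta)\,e^{-in\theta}\,d\theta\Big)\,dw(z),
\]
and by orthogonality of the fibre characters the inner integral equals (a constant multiple of) $\widehat f_n(z)$, every cross term with Fourier index $m\neq n$ vanishing. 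Absorbing the normalising constants into the choice of $dw(z)$ and $c_n$ then yields precisely $\sum_{n\ge 0}\int_D\widehat f_n(z)\,E_{2n}(z,s)\,dw(z)$.

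The main obstacle I expect is the analytic justification of interchanging the infinite sum over $K$-types with the integration over $\Gamma\backslash G$. For this I would use that $f\in C_0^\infty(S\Sigma)$, so integration by parts in $\theta$ makes the coefficients $\widehat f_n(z)$ rapidly decreasing in $n$ uniformly on the (compact) support of $f$, while for $\Re(s)>1$ each $E_{2n}(z,s)$ converges absolutely and grows at most polynomially in $n$ on compact sets; the product is therefore summable and Fubini applies. The compact support of $f$ also disposes of any convergence issue near the cusp. The only remaining point is bookkeeping: checking that the weight conventions make $\widehat f_n\cdot E_{2n}$ genuinely $\Gamma$-invariant and that the index matching in the orthogonality step is dictated by the chosen normalisation of the fibre coordinate, after which the identity follows.
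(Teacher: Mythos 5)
Your argument breaks at its first two steps, and both failures are fatal rather than cosmetic. First, $\mathfrak{M}(f,s)$ is \emph{not} the pairing $\int_{\Gamma\backslash G}f(g)\,E(g,s)\,dg$. By definition (the two-dimensional analogue of Definition \ref{Mellinf}, which is Sarnak's definition), it is the Mellin transform in $y$ of the integral of $f$ over the closed horocycle \emph{at a fixed frame}: $\mathfrak{M}(f,s)=\int_0^\infty\big(\int_0^1 f(x+iy,0)\,dx\big)\,y^{s-2}\,dy$; no integration over the fibre variable $\theta$ occurs. If you pair $f$ against the weight-zero series $E(\cdot,s)$ over all of $\Gamma\backslash G$ and unfold, the surviving $\theta$-integration annihilates every $K$-type of $f$ except the trivial one, so that computation can only ever produce the single term $\int_D\widehat f_0(z)\,E(z,s)\,dw(z)$, and no normalisation of constants recovers the others. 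Second, the ``$K$-isotypic decomposition'' $E(z,\theta,s)=\sum_{n\ge0}c_n\,e^{-in\theta}E_{2n}(z,s)$ does not exist: the inducing section $\varphi_s(g)=\Im(g\cdot i)^{s}$ is right-$K$-invariant, so $E(g,s)$ is constant in $\theta$ and equals its own fibre average; its only $K$-component is the trivial one ($c_n=0$ for $n\neq0$). The weight-$2n$ series are independent automorphic forms built from \emph{different} inducing sections (carrying a nontrivial character of the rotation angle), not Fourier modes of $E(\cdot,s)$; your orthogonality step therefore manufactures terms that are not there, and your explanation of the range $n\ge 0$ rests on this same non-existent decomposition. (The correct ``pairing'' picture, if you want one, is against a distribution of the form $\sum_n E_{2n}(z,s)\,e^{-in\theta}$ involving \emph{all} weights --- this is exactly the shape of the micro-local lift $d\epsilon_{it}$ in (\ref{Medida}) --- not against the single weight-zero Eisenstein series.)

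The mechanism that actually proves the proposition --- and the one the paper implements for its three-dimensional generalisation, Proposition \ref{PPr} --- is an unfolding of the \emph{base}, not of the fibre. Write the strip $S=\{0\le x\le 1,\ y>0\}$ as $\bigcup_{\sigma\in\Gamma_\infty\backslash\Gamma}\sigma(D)$ (the analogue of (\ref{LS3})), change variables $z=\sigma z'$ on each piece, and use the $\Gamma$-invariance of $f$ on $S\Sigma$: since $\sigma$ acts on frames through its differential, $f(\sigma z',0)=f\big(z',-2\arg(cz'+d)\big)$, i.e.\ the Fourier coefficients transform with an automorphy factor, $\widehat f_n(\sigma z')=e^{2in\arg(cz'+d)}\,\widehat f_n(z')$ (the analogue of Lemma \ref{LP}). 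It is precisely these $\sigma$-dependent phases, summed against $\Im(\sigma z')^{s}$ over $\sigma\in\Gamma_\infty\backslash\Gamma$, that reassemble into the weight-$2n$ Eisenstein series and couple $\widehat f_n$ to $E_{2n}$; a $K$-invariant kernel can never produce this coupling. (Equivalently, one may run Rankin--Selberg in reverse term by term: for $\Re(s)>1$ unfold each $\int_D\widehat f_n\,E_{2n}\,dw$ back to the strip, where only the identity coset survives, and resum over $n$.) Your closing analytic remarks --- rapid decay of $\widehat f_n$ in $n$ from integration by parts, absolute convergence for $\Re(s)>1$, compact support near the cusp --- are correct and are indeed what justifies the interchange of sum and integral, but they justify interchanges in an identity your first two steps never reach.
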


We now turn to dimension 3, let $ M = \PSL (2, \Z [i]) \backslash \H^3 $ be the Picard variety. In \cite{Otto} we prove  an analogue of the proposition (\ref{PSar}) for functions defined on $ S M $, where  $S M= {\PSL(2,Z[i])^\ast} \backslash S \H^3$ (the action is via the differential) and $S \H^3$ is the unit tangent bundle to $\H^3$.
It is then natural to extend the previous result to functions on $ \PSL(2, \Z[i]) \backslash \PSL (2, \C) $. The formula (\ref{PPO2}) corresponds to that case.



\bigskip
\bigskip
On the other hand, results of Zelditch's paper  \cite{Zel} imply that if $ \Sigma $ is the modular surface and $ \varphi_j $ the discrete spectrum of the hyperbolic Laplacian, that is, $\Delta \varphi_j = \lambda_j \varphi_j $ with $\Delta \, = \, y^2 \Big( \frac{\delta^2}{\delta x^2} + \frac{\delta^2}{\delta y^2} \Big)$, then if $\Omega$ ($\text{area} \, (\Omega) \neq 0$) is a compact Jordan measurable set in $X$, it exists a subsequence $\varphi_{j_k}$ such that 
$$ \lim_{k \rightarrow \infty} \int_{\Omega} \abs{\varphi_{j_k}}^2 \, dA \, = \, \frac{\text{area} \, (\Omega)}{\text{area} \, (X)}. $$

\bigskip
For the continuous spectrum of hyperbolic Laplacian given by the Eisenstein series, Luo and Sarnak  \cite {LuoSar} proved that for any  compact Jordan measurable sets  $\Omega_1$ and $\Omega_2$ ($\text{area} \, (\Omega_2) \neq 0$) in  $\Sigma$  is true that
\begin{equation}\label{Intro3}
 \lim_{t \rightarrow \infty} \frac{ \int_{\Omega_1} \abs{E \big(z, \tfrac{1}{2} +it \big) }^2  \,  \tfrac{dxdy}{y^2} }{ \int_{\Omega_2} \abs{E \big(z, \tfrac{1}{2} +it \big)  }^2  \,  \tfrac{dxdy}{y^2} }  \, = \, \frac{\text{area} \, (\Omega_1)}{\text{area} \, (\Omega_2)},
\end{equation}
which follows from
\begin{equation}\label{Intro4}
\int_{\Sigma} f(z) \cdot \abs{E \big(z, \tfrac{1}{2} +it \big)}^2  \,  \frac{dxdy}{y^2}  \, \sim \, \frac{48}{\pi} \, \ln t \; \int_{\Sigma} f(z) \,  \frac{dxdy}{y^2}.
\end{equation}

\bigskip
Jakobson \cite{Jak} formulated an analog of (\ref{Intro4}) and then of (\ref{Intro3}) for $ \PSL(2,\Z) \backslash \PSL(2,\R) $, for this, it was required a micro-local lift of the positive distribution $ \abs{E \big (z, \tfrac{1}{2} + it \big)}^2 \, dA $ to $ S \Sigma $. Zelditch \cite{Zel} gave the following distribution:
\begin{equation}\label{DisZ}
 d\epsilon_{\rfrac{1}{2}+it} \, = \,   E \big( \cdot, \tfrac{1}{2} +it \big) \sum_{k=0}^\infty E_{2k} \big( \cdot, \tfrac{1}{2} - it \big) \, e^{-2ik \theta}.
\end{equation}

Zelditch noted that the distribution $ \epsilon_{\rfrac{1}{2} + it} $ is not positive. A new positive distribution $ d\epsilon_{\rfrac{1}{2} + it}^ F $ built using the Friedrichs' symmetrization technique is required, see Zelditch \cite{Zel}.
\begin{theo}[Jakobson]
 Let  $\Omega_1$ and $\Omega_2$ ($\text{vol} \, (\Omega_2) \neq 0$) be arbitrary compact Jordan measurable sets in \\$\PSL(2,\Z) \backslash \PSL(2,\R)$. Then 
\begin{equation}\label{Intro5}
 \lim_{t \rightarrow \infty} \frac{ \int_{\Omega_1} d\epsilon_{\rfrac{1}{2}+it}^F }{ \int_{\Omega_2} d\epsilon_{\rfrac{1}{2}+it}^F }  \, = \, \frac{\text{vol} \, (\Omega_1)}{\text{vol} \, (\Omega_2)}.
\end{equation}
\end{theo}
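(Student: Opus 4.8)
The plan is to deduce the ratio statement (\ref{Intro5}) from a single \emph{asymptotic equidistribution law}: for every $f \in C_0^\infty\big(\PSL(2,\Z)\backslash\PSL(2,\R)\big)$,
\begin{equation*}
 \int_{S\Sigma} f \; d\epsilon^F_{\rfrac{1}{2}+it} \;\sim\; c \,\ln t \int_{S\Sigma} f \; dV \qquad (t \to \infty),
\end{equation*}
with a universal constant $c$ independent of $f$ and $dV$ the normalized Haar measure on $S\Sigma = \PSL(2,\Z)\backslash\PSL(2,\R)$. Granting this, I would argue exactly as in the Luo--Sarnak reduction behind (\ref{Intro3}): because $d\epsilon^F_{\rfrac{1}{2}+it}$ is a \emph{positive} measure (this being the whole purpose of the Friedrichs symmetrization), a compact Jordan-measurable set $\Omega$ can be sandwiched, $f_- \le \mathbf{1}_\Omega \le f_+$, by smooth functions whose $dV$-integrals differ by less than $\varepsilon$ (possible precisely because $\partial\Omega$ has measure zero). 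Applying the asymptotic to $f_\pm$ for both $\Omega_1$ and $\Omega_2$, forming the ratio, and letting $\varepsilon\to 0$ gives (\ref{Intro5}), the common factor $c\,\ln t$ cancelling.

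It then remains to establish the asymptotic law on a spanning set of test functions. Decomposing $C_0^\infty(S\Sigma)$ according to the $K=\SO(2)$-types and the spectral decomposition of $\Delta$ on the base, it suffices by linearity and density to treat two families: functions attached to Maass \emph{cusp forms} together with their $K$-translates, and \emph{incomplete Eisenstein series}. Pairing $f$ against Zelditch's distribution (\ref{DisZ}) and expanding in $K$-types reduces $\int f\,d\epsilon_{\rfrac{1}{2}+it}$, term by term, to integrals of $f$ against the products $E(\cdot,\tfrac12+it)\,E_{2k}(\cdot,\tfrac12-it)$.

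For the cuspidal family I would unfold and invoke the Rankin--Selberg / regularized triple-product machinery: the pairing is controlled by the Rankin--Selberg $L$-function of the cusp form against the $\abs{E(\cdot,\tfrac12+it)}^2$ factor, and elementary convexity bounds show it is $o(\ln t)$; thus cusp forms contribute nothing to the leading term, consistent with their vanishing against $dV$. For the incomplete Eisenstein family the logarithm is produced: unfolding reduces the pairing to the self-inner-product of $E(\cdot,\tfrac12+it)$ over a truncated fundamental domain, and the Maass--Selberg relation shows this regularized norm grows like $\ln t$ (the $\log Y$ term together with the logarithmic derivative $\varphi'/\varphi$ of the scattering matrix evaluated at $\tfrac12+it$). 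Extracting this main term yields $c\,\ln t\int f\,dV$; this is the two-dimensional analogue of (\ref{Intro4}) and is the computational heart of the estimate.

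The main obstacle is twofold, and both parts concern $d\epsilon^F$ rather than $d\epsilon$. First, one must verify that the Friedrichs symmetrization genuinely produces a \emph{positive} distribution, since this is what legitimizes the sandwiching step and is false for $d\epsilon_{\rfrac{1}{2}+it}$ itself. Second, and more delicately, one must show that symmetrization does not disturb the leading asymptotic, i.e.\ that $\int f\,d\epsilon^F_{\rfrac{1}{2}+it}$ and $\int f\,d\epsilon_{\rfrac{1}{2}+it}$ share the same $c\,\ln t$ main term. Heuristically the symmetrization only reshuffles the off-diagonal $K$-type coefficients, whereas the $\ln t$ growth is carried entirely by the $K$-invariant (base-manifold) component of the pairing, which symmetrization leaves fixed; making this separation rigorous — bounding the symmetrization error by $o(\ln t)$ uniformly in $f$ — is where the real difficulty lies, and it is precisely this step that the present paper proposes to mirror in the $\PSL(2,\Z[i])\backslash\PSL(2,\C)$ setting.
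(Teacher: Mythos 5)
You should first be aware that the paper does not prove this theorem at all: it is quoted from Jakobson \cite{Jak}, and what the paper actually does is transplant the strategy of its proof to $\PSL(2,\Z[i])\backslash\PSL(2,\C)$ — the pairings of the micro-local lift with cusp forms (section 3) and with incomplete Eisenstein series (section 4), with the Friedrichs symmetrization step left as a conjecture. Measured against that strategy, your architecture is the right one: a single asymptotic law $\int f\, d\epsilon^F_{\rfrac{1}{2}+it} \sim c\,\ln t \int f\, dV$ tested on cusp forms and incomplete Eisenstein series, followed by the positivity/sandwiching reduction for Jordan measurable sets; and your Maass--Selberg route to the $\ln t$ main term for the Eisenstein family is a legitimate variant (it is Zelditch's \cite{Zel}) of the explicit Fourier-expansion and residue computation that Luo--Sarnak \cite{LuoSar} and the present paper carry out. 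However, there are two genuine gaps.

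First, your claim that for the cuspidal family ``elementary convexity bounds show it is $o(\ln t)$'' is false, and this is not a technicality. After unfolding, the archimedean factors in the cuspidal pairing decay only polynomially in $t$ (in the present 3D setting this is exactly (\ref{FC19}), which gives $t^{-1}$), and the convexity bound for the relevant $L$-value in the $t$-aspect precisely offsets that decay, leaving a bound of size $t^{\epsilon}$ — which dominates $\ln t$ rather than being $o(\ln t)$. A subconvex bound with a power saving is indispensable: Jakobson and Luo--Sarnak use the Good--Meurman bound $\ll t^{\rfrac{1}{3}+\epsilon}$ for $L(\tfrac{1}{2}+2it,\phi)$, and the present paper needs Michel--Venkatesh and Diaconu--Garrett, i.e.\ the bound (\ref{FC21}) with its $\delta>0$, to make (\ref{FC18}) tend to zero. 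Second, you defer exactly the two properties of the Friedrichs symmetrization — positivity, and the fact that $d\epsilon^F_{\rfrac{1}{2}+it}$ and $d\epsilon_{\rfrac{1}{2}+it}$ share the same $\ln t$ main term — that legitimize the sandwiching step, calling them ``where the real difficulty lies.'' In Jakobson's proof these are not re-proved but imported from Zelditch \cite{Zel}; as written, your proposal is incomplete at precisely this point unless you likewise invoke Zelditch's construction, and it is worth noting that in the 3D setting of this paper that step is not a citation but an open conjecture.
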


\bigskip
Koyama \cite{Koy} got the analog result at (\ref{Intro3}) for arithmetic 3-orbifolds $M_D = \PSL(2,\mathcal{O}_D)  \backslash \H^3$ with only one cusp, where  $\Q( \sqrt{-D})$  be a imaginary quadratic field and $\mathcal{O}_D$ its ring of integers, 
this includes the Picard variety that corresponds to the field $\Q( \sqrt{-1})$. 
\begin{theo}[Koyama]
 Let  $\Omega_1$ and $\Omega_2$ ($\text{vol} \, (\Omega_2) \neq 0$) be arbitrary compact Jordan measurable  sets in $M_D$. Then 
\begin{equation}
 \lim_{t \rightarrow \infty} \frac{ \int_{\Omega_1} d\eta_{it} }{ \int_{\Omega_2} d\eta_{it} }  \, = \, \frac{\text{vol} \, (\Omega_1)}{\text{vol} \, (\Omega_2)},
\end{equation}
where $d\eta_{it} = \abs{E \big(z+\lambda j, \tfrac{1}{2} +it \big) }^2  \, dV$ with $E \big(z+\lambda j,s)$ being the Eisenstein series for $\PSL(2,\mathcal{O}_D) $ and $dV$ is the volume element of $\H^3$.
\end{theo}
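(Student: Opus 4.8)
The plan is to deduce the equidistribution of the positive measures $d\eta_{it} = \abs{E(\cdot,\tfrac12+it)}^2\,dV$ from a single Rankin--Selberg asymptotic, exactly as (\ref{Intro3}) follows from (\ref{Intro4}) in dimension two. Concretely, I would first establish that for every $f \in C_0^\infty(M_D)$ there is a constant $C_D>0$, depending only on $K=\Q(\sqrt{-D})$, such that
\[
 \int_{M_D} f(P)\,\abs{E\big(P,\tfrac12+it\big)}^2\,dV \;\sim\; C_D\,(\ln t)\int_{M_D} f(P)\,dV, \qquad t\to\infty.
\]
Because $d\eta_{it}$ is a \emph{positive} measure, the theorem then follows by the standard sandwiching argument: a compact Jordan measurable $\Omega_i$ has $\mathbf{1}_{\Omega_i}$ approximable from above and below by functions in $C_0^\infty(M_D)$ whose $dV$-integrals differ by less than any prescribed $\varepsilon$; inserting these into the asymptotic, the common factor $C_D\ln t$ cancels in the quotient and leaves $\text{vol}(\Omega_1)/\text{vol}(\Omega_2)$ up to $O(\varepsilon)$.

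For the asymptotic I would follow Luo--Sarnak \cite{LuoSar} and Koyama \cite{Koy} and split $f$ along the spectrum of the Laplacian on $M_D$, writing $f = \Psi + \sum_j c_j\varphi_j$, where $\Psi$ is a superposition of incomplete Eisenstein series (carrying the constant and continuous spectrum) and the $\varphi_j$ are Maass cusp forms. Since cusp forms have zero integral over $M_D$, one has $\int_{M_D} f\,dV = \int_{M_D}\Psi\,dV$, so it suffices to show that $\int\Psi\abs{E}^2\,dV$ produces the main term and that the cuspidal periods $\int\varphi_j\abs{E}^2\,dV$ are $o(\ln t)$ after summation.

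The main term comes from $\int_{M_D}\Psi\,\abs{E}^2\,dV$. Unfolding the incomplete Eisenstein series against $\abs{E(\cdot,\tfrac12+it)}^2$ reduces this to an integral over the cusp neighbourhood of the zeroth Fourier coefficient of $\abs{E(z+\lambda j,\tfrac12+it)}^2$ in the $\lambda$-variable. That coefficient is governed by the Maass--Selberg relation: its divergent part near the single cusp yields the factor $\ln t$, and the constant $C_D$ is read off from the scattering function $\phi(s)$ of $M_D$, which is a ratio of Dedekind zeta values $\zeta_K$, evaluated on the critical line. The cuspidal terms form the error: unfolding expresses $\int_{M_D}\varphi_j\abs{E(\cdot,\tfrac12+it)}^2\,dV$ through a Rankin--Selberg convolution $L$-function of $\varphi_j$ against $E$ at the shifted points $\tfrac12\pm it$, times archimedean $\Gamma$-factors that decay in $t$.

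The hard part will be the uniform control of this cuspidal (and residual) sum. Each individual triple--product period decays through its $\Gamma$-factors, but to sum over the infinite spectrum and still beat $\ln t$ one needs polynomial-in-$t$ upper bounds for the Rankin--Selberg $L$-functions and for $\zeta_K$ on the critical line, coupled with the rapid decay of the spectral coefficients $c_j$ of the smooth function $f$ and a mean-value (large-sieve type) estimate for the Fourier coefficients of the $\varphi_j$. Producing these bounds for the imaginary quadratic field $K$, together with the regularization of the divergent self-energy integral of $E$ at the cusp that underlies the $\ln t$ main term, is where the genuine analytic difficulty lies; once they are in place the main term dominates and the stated limit follows.
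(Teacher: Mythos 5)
Your outline is, structurally, exactly the proof in the literature: the paper itself does not prove this statement (it is quoted as background from Koyama \cite{Koy}, who follows Luo--Sarnak \cite{LuoSar}), and the reduction you describe --- positivity of $d\eta_{it}$ plus sandwiching of $\mathbf{1}_{\Omega_i}$ by smooth functions, then a decomposition of $f$ into cusp forms and incomplete Eisenstein series, unfolding, the Maass--Selberg/scattering analysis for the $\ln t$ main term, and decay of the cuspidal periods --- is precisely Koyama's argument, and is also the scheme this paper runs in Sections 3 and 4 for the lifted distribution $d\epsilon_{it}$.

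The one genuine gap is the strength of the $L$-function input: ``polynomial-in-$t$ upper bounds'' cannot close the argument, because the convexity bound is already polynomial and it exactly fails. In dimension $3$ the archimedean factors in a cuspidal period give a net decay of only $t^{-1}$ (Stirling; compare (\ref{FC19})), while the $t$-dependent $L$-value attached to a Maass cusp form on $M_D$ has degree $2$ over $K$, hence degree $4$ over $\Q$, so its convexity bound in the $t$-aspect is $t^{1+\epsilon}$; convexity therefore leaves each cuspidal term at $O_\epsilon(t^{\epsilon})$, which does \emph{not} imply $o(\ln t)$, and neither the rapid decay of the coefficients $c_j$ nor a large-sieve average over $j$ can repair this, since the obstruction is term-by-term in the $t$-aspect. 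What is needed is a power saving $\ll t^{1-\delta+\epsilon}$, i.e.\ genuine subconvexity over the imaginary quadratic field --- exactly what this paper imports from Michel--Venkatesh \cite{MiVen} and Garrett--Diaconu \cite{GaDia} in (\ref{FC21}). The same issue appears in your incomplete-Eisenstein (main term) analysis: the error there carries two critical-line Hecke $L$-values against a total gamma decay of $t^{-1}$, and the convexity bound $t^{1/2+\epsilon}$ for each (degree $2$ over $\Q$) again yields only $O_\epsilon(t^{\epsilon})$; one needs the Heath-Brown/Kaufman/S\"ohne bound $L(\tfrac12+it,\chi_p)\ll (1+\abs{t})^{\rfrac{1}{3}+\epsilon}$ (compare (\ref{Bo2}) and \cite{HB}, \cite{Kau}, \cite{Soh}) to obtain an error $O(t^{-\rfrac{1}{3}+\epsilon})$. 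With these two subconvexity inputs made explicit, your proposal becomes Koyama's proof.
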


\bigskip
For $ \PSL(2, \Z[i]) \backslash \PSL(2,\C) $ the distribution in (\ref{Medida}) is the analogue of the distribution in (\ref{DisZ}). The formula (\ref{Intro5}) of Jakobson's theorem corresponds to the formula in (\ref{QUEPM}).

\bigskip
In section 1 of preliminaries we recall some concepts such as: Wigner matrix, Eisenstein series and your Fourier expansion. In section 2 we review the Mellin transform and we prove the formula in (\ref{PPO2}). In section 3 we will calculate the interior product $ (f, d\epsilon_{it}) $ for when $ f $ is a cusp form and we estimate when $ t \rightarrow \infty $. In section 4 we consider the case where $ f $ is an incomplete Eisenstein series. In the appendix we gather some auxiliary results.

\bigskip
\bigskip
\section{Preliminaries}
Let  $K:=\Q( \sqrt{-1})$  be an imaginary quadratic field and $\mathcal{O}= \Z[i]$ its ring of integers. Let $\widetilde{\Gamma} := \PSL(2,\Z[i])$ be the corresponding Bianchi subgroup in $\PSL(2,\C)$ and $\Gamma =\SL(2,\Z[i])$ the  Bianchi subgroup in $\SL(2,\C)$. The quotient ${M}= \widetilde{\Gamma} \backslash \H^3  $ are a tridimensional Bianchi orbifold with only one cusp in $\infty$. Since $\PSL(2,\C)$ can be identified with the orthonormal frame bundle  $\mathcal{F}(\H^3)$ of hyperbolic 3-space then $\mathcal{F}({M})=\widetilde{\Gamma} \backslash \PSL(2,\C)$ can be identified with the orthonormal frame bundle  of ${M}$.

\bigskip
The upper half-space model for hyperbolic space will be used  $ \H^3 $:
\[
  \H^3  \, = \, \{ (z, \lambda) \, ; \, z \in \C , \lambda \in \R^+ \} =
              \{ (x,y, \lambda) \, ; \, x,y \in \R , \lambda > 0 \},
\]
provided with the hyperbolic Riemannian metric: $ ds^2 \, = \, \frac{dx^2+dy^2 + d\lambda^2}{\lambda^2}. $
A point $P \in \H^3$ is denoted as follows:
\[
    P \, = \, (z,\lambda ) = \, z + \lambda j,
\]
where $z=x+iy$, $\lambda>0$, and $j$ is the well know quaternion. The groups $\PSL(2,\C)$ and $\SL(2,\C)$ act on $\H^3$ as follows:
\begin{equation}\label{Pre3}
    \displaystyle  \left(  \begin{matrix}
a & b \\
c & d \\
\end{matrix}  \right)  ( z + \lambda  j ) \, =  \,
    \frac{ (a z  + b)(\bar{c} \bar{z} + \bar{d}) + a \bar{c} \, \lambda^2 }{  \vert cz + d \vert^2 + \vert c \vert^2 \, \lambda^2 } +
         \frac{\lambda}{ \vert cz + d \vert^2 + \vert c \vert^2 \, \lambda^2 }   \, j.
\end{equation}

\begin{rem}The action for $\SL(2,\C)$ is not effective since two matrices that differ by a sign determine the same M\"obius transformation. The group  $\PSL( 2,\C)$ acts effectively on $ \H^3 $  by orientation-preserving isometries of the hyperbolic metric.
\end{rem}


\bigskip
If $g \, = \, \left(  \begin{matrix}
a & b \\
c & d \\
\end{matrix}  \right) \, \in \, \SL(2,\C)$ then by the complex Iwasawa decomposition $g$  can be written in a unique way by the following formula:
\[
    \displaystyle \left(  \begin{matrix}
a & b \\
c & d \\
\end{matrix}  \right) = \left(  \begin{matrix}
1 & \frac{ a \bar{c} +  b \bar{d} }{ \vert c \vert^2 + \vert d \vert^2 }\\
0 & 1 \\
\end{matrix}  \right)  \left(  \begin{matrix}
\frac{ 1}{ \sqrt{ \vert c \vert^2 + \vert d \vert^2 } } & 0 \\
0 & \sqrt{ \vert c \vert^2 + \vert d \vert^2 } \\
\end{matrix}  \right)   \left(  \begin{matrix}
\frac{ \bar{d}}{ \sqrt{ \vert c \vert^2 + \vert d \vert^2 } } & \frac{ - \bar{c}}{ \sqrt{ \vert c \vert^2 + \vert d \vert^2 } } \\
\frac{ c}{ \sqrt{ \vert c \vert^2 + \vert d \vert^2 } } & \frac{ d}{ \sqrt{ \vert c \vert^2 + \vert d \vert^2 } } \\
\end{matrix}  \right).
\]

We call  $(z,\lambda, A)$ the \emph{Iwasawa coordinates} of $g$ with

\begin{equation}\label{Pre4}
    \displaystyle  z = \frac{ a \bar{c} +  b \bar{d} }{\vert c \vert^2 + \vert d \vert^2 },  \; \;  \; \; \;
    \lambda = \vert c \vert^2 + \vert d \vert^2, \; \; \; \; \; A = \left(  \begin{matrix}
\frac{ \bar{d}}{ \sqrt{ \lambda } } & \frac{ - \bar{c}}{ \sqrt{ \lambda } } \\
\frac{ c}{ \sqrt{ \lambda } } & \frac{ d}{ \sqrt{ \lambda } } \\
\end{matrix}  \right) \in \SU(2).
\end{equation}
Using Iwasawa coordinates we can write  $g$ as  $  g \, = \, n [ z ] \, a[\tfrac{1}{\lambda}] \, A, $ where
\[
n[z] := \left(  \begin{matrix}
1 & z \\
0 & 1 \\
\end{matrix}  \right) \hspace{4cm}
a[\lambda] := \left(  \begin{matrix}
\sqrt{\lambda} & 0 \\
0 & \frac{1}{\sqrt{\lambda}} \\
\end{matrix}  \right). 
\]

\bigskip
Denote $G:=\SL(2,\C)$ and  $dg$, $dk$ the corresponding Haar measures in $G$ and $K$ respectively that satisfy
$$ \int_{\SU(2)} dk \, = \, 1, \hspace{4cm} dg \, = \, \frac{dx \, dy \, d\lambda \, dk}{\lambda^3}.$$
Also will be denoted by $d V $ the hyperbolic volume measure in $\H^3$, which is
$$ d V \, = \,  \frac{dx \, dy \, d\lambda }{\lambda^3}. $$

\bigskip
Let $l \in \frac{1}{2} \Z$, $l \geq 0$,  two representations of $\SU(2)$ of dimension $2l+1$ will be used,  for that purpose, we  consider vectorial spaces on $\C$  generated by:
$$ V_{2l} \, := \, \{\, z^{l-q} \, ; \, q \in \tfrac{1}{2} \Z, \, q \equiv l \text{ mod } 1, \, q \in [-l,l] \, \},$$
\vspace{-0.3cm}
$$ \mathcal{V}_{2l} \, := \, \{ \, f_m (x,y) \, ; \, m \in \tfrac{1}{2} \Z,  m \equiv l \text{ mod } 1, \, m \in [-l,l] \, \}, \; \; \text{ where } f_m (x,y) := \frac{x^{l+m}y^{l-m}}{\sqrt{(l+m)!(l-m)!}}. $$

\bigskip 
Guleska \cite{Lok04} considers the following representation, $ K \in \SU(2) \longmapsto \sigma_l (K) : V_{2l} \longrightarrow  V_{2l}$ given by
$$ \sigma_l \big( K[\alpha, \beta] \big) \, z^{l-q} \, := \, \big( \alpha z - \overline{\beta} \big)^{l-q} \; \big( \beta z + \overline{\alpha} \big)^{l+q}, \text{ where } 
  K=K[\alpha, \beta] :=  
    \left( \begin{matrix} 
    \alpha & \beta \\
    -\overline{\beta} & \overline{\alpha} \\
    \end{matrix}  \right)$$
    with $\alpha,\beta \in \C$ that satisfy  $\abs{\alpha}^2 + \abs{\beta}^2 =1$. 
We have the next scalar product 
$$ \langle z^{l-q}, z^{l-k}  \rangle \, = \, 
\begin{cases}
0   & \text{if $k \neq q$}\\
(l+q)!(l-q)!    & \text{if $k=q.$}
\end{cases}$$

Using the basis $V_{2l}$ the functions $\Phi_{km}^l : \SU(2) \longrightarrow \C$, that are the matrix coefficients of the reprepresentation $\sigma_l$, are defined by
$$  \Phi_{km}^l (K) \, := \, \frac{1}{(l+k)!(l-k)!} \;  \langle \sigma_l (K) z^{l-m}, z^{l-k}  \rangle, \; \; \forall K \in \SU(2).$$

\bigskip
The basis of functions $\{ \Phi_{km}^l \} $ in $\SU(2)$ is orthogonal, this is
\begin{equation}\label{A10}
\int_{\SU(2)}  \Phi_{km}^l (K) \cdot \overline{ \Phi_{k'm'}^{l'} (K)} \; dk \, = \,
\frac{1}{2l+1} \cdot  \frac{(l+m)!(l-m)!}{(l+k)!(l-k)!} \; \delta_{l,l'} \,  \delta_{k,k'}  \delta_{m,m'}.
\end{equation} 

\bigskip 
Wigner  \cite{Wigner} uses the next  representation,  $ K \in \SU(2) \longmapsto  \textbf{P}_K^l : \mathcal{V}_{2l} \longrightarrow \mathcal{V}_{2l}$ given  by
$$
\textbf{P}_K^l f_m (x,y)   \, := \,
f_m \bigg( K^{-1}
    \left(  \begin{matrix}
x  \\
y  \\
\end{matrix}  \right)
 \bigg). $$

The functions $\mathfrak{U}_{km}^{l} :\SU(2) \longrightarrow \C$ are determined by the next formula 
\begin{equation*}
     \displaystyle   \textbf{P}_K^l f_m (x,y) \, = \, \sum_{k= -l}^l  \mathfrak{U}_{km}^{l} ( K)   \cdot f_k (x,y).
\end{equation*}

The relationship beetween the matrix coefficients  in the different basis is given by
\begin{lemm}
Let $l,k,m \in \tfrac{1}{2} \Z$, $l \geq 0$, such that  $l \equiv k \equiv m \text{ mod } 1$, $k,m \in [-l,l]$. Then 
\begin{equation}\label{A11}
\Phi_{km}^l (K) \, = \, \frac{\sqrt{(l+m)!(l-m)!}}{\sqrt{(l+k)!(l-k)!}} \; \cdot  
 \mathfrak{U}_{km}^{l} \big( I K I^{-1} \big), \; \; \forall K \in \SU(2),
\end{equation}
where  $ I := \left(  \begin{matrix}
i & 0 \\
0 & -i \\
\end{matrix}  \right).$
\end{lemm}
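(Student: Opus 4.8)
The plan is to prove the identity by computing both families of matrix coefficients explicitly as coefficients of one- and two-variable polynomials, and then matching the two expansions term by term.

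First I would unwind the definition of $\Phi_{km}^l$. Since $\sigma_l(K[\alpha,\beta])\,z^{l-m} = (\alpha z - \overline{\beta})^{l-m}(\beta z + \overline{\alpha})^{l+m}$ and the scalar product satisfies $\langle z^{l-q}, z^{l-k}\rangle = \delta_{qk}\,(l+q)!(l-q)!$, the orthogonality of the $z^{l-q}$ together with the normalization $\tfrac{1}{(l+k)!(l-k)!}$ in the definition shows that $\Phi_{km}^l(K)$ is precisely the coefficient of $z^{l-k}$ in the polynomial $(\alpha z - \overline{\beta})^{l-m}(\beta z + \overline{\alpha})^{l+m}$. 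Expanding both factors by the binomial theorem, this coefficient becomes a sum over $a+b = l-k$ of $\binom{l-m}{a}\binom{l+m}{b}\,\alpha^{a}(-\overline{\beta})^{l-m-a}\beta^{b}\overline{\alpha}^{\,l+m-b}$.

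Next I would do the same for $\mathfrak{U}_{km}^l$. Writing $K^{-1}=K[\overline{\alpha},-\beta]$ and substituting the vector $K^{-1}(x,y)^{\top}$ into $f_m$, one gets that $\mathbf{P}_K^l f_m(x,y)$ equals $\tfrac{(\overline{\alpha}x-\beta y)^{l+m}(\overline{\beta}x + \alpha y)^{l-m}}{\sqrt{(l+m)!(l-m)!}}$; reading off the coefficient of $x^{l+k}y^{l-k}$ and restoring the normalization of $f_k$ expresses $\mathfrak{U}_{km}^l(K)$ as $\tfrac{\sqrt{(l+k)!(l-k)!}}{\sqrt{(l+m)!(l-m)!}}$ times a sum over $a'+b'=l+k$ of $\binom{l+m}{a'}\binom{l-m}{b'}\,\overline{\alpha}^{\,a'}(-\beta)^{l+m-a'}\overline{\beta}^{\,b'}\alpha^{l-m-b'}$. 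The third step is the elementary computation $IKI^{-1} = K[\alpha,-\beta]$, so that passing to $\mathfrak{U}_{km}^l(IKI^{-1})$ amounts to the single substitution $\beta \mapsto -\beta$ (hence $\overline{\beta}\mapsto -\overline{\beta}$) in this expression.

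Finally I would match the two sums through the index bijection $a' = l+m-b$, $b' = l-m-a$. This respects the constraints (indeed $a+b=l-k$ becomes $a'+b'=l+k$), turns $\binom{l+m}{a'}\binom{l-m}{b'}$ into $\binom{l+m}{b}\binom{l-m}{a}$ via $\binom{n}{j}=\binom{n}{n-j}$, and carries the monomials in $\alpha,\overline{\alpha},\beta,\overline{\beta}$ onto each other; a short check shows the sign produced by $\beta\mapsto -\beta$ is exactly $(-1)^{l-m-a}$, which is the sign already present in the expansion of $\Phi_{km}^l$. The two sums therefore coincide, and collecting the leftover factorials leaves precisely the factor $\tfrac{\sqrt{(l+m)!(l-m)!}}{\sqrt{(l+k)!(l-k)!}}$, which is the claimed identity. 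I expect the main obstacle to be purely bookkeeping: keeping the four summation ranges and the factorial normalizations aligned, and verifying that the single sign change coming from conjugation by $I$ is exactly what reconciles the opposite sign conventions of Guleska's and Wigner's expansions.
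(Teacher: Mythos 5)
Your proposal is correct: the coefficient extractions for $\Phi_{km}^l$ and $\mathfrak{U}_{km}^l$, the computation $IKI^{-1}=K[\alpha,-\beta]$, and the index bijection $a'=l+m-b$, $b'=l-m-a$ (under which the sign $(-1)^{b'}=(-1)^{l-m-a}$ and the binomial factors match term by term) all check out. This is exactly the ``direct calculations'' the paper invokes without writing them down, so your argument coincides with the paper's intended proof, merely carried out in full.
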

\begin{proof}
Direct calculations.
\end{proof}



\bigskip
We remember the double covering epimorphism  $\Phi : SU(2) \longrightarrow SO(3)$ which is denoted for  $ \Phi ( A ) = \Phi_A$, then
\begin{equation}\label{HomorfismoPhi2}
\Phi_{ AB} = \Phi_{A} \circ \Phi_{B}, \; \;  \forall A,B \in SU(2).
\end{equation}
\vspace{-0.4cm}
\begin{equation}\label{WI13}
\Phi_{\pm I} = I,\quad\quad \text{kernel}\,\Phi=\left\{I,-I \right\}.
\end{equation}
This is of course  the spin cover of $SO(3)$. Explicitly, if $ A = \left(  \begin{matrix}
\alpha & \beta \\
-\overline{\beta} & \overline{\alpha} \\
\end{matrix}  \right) \in SU(2)$, then
\begin{equation}\label{AuxM1}
   \Phi_A \, = \, \left(  \; \begin{matrix}
  \text{Re} \, (\alpha^2 - \beta^2)                & - \text{Im} \, (\alpha^2 + \beta^2)          & 2 \,  \text{Re} \, ( \alpha \beta ) \\
  \text{Im} \, ( \alpha^2 -  \beta^2 )             & \text{Re} \, ( \alpha^2 +  \beta^2 )             &  2 \, \text{Im} \, (  \alpha \beta  ) \\
 - 2 \, \text{Re} \, (  \overline{\alpha} \beta )  & 2 \, \text{Im} \, (  \alpha \overline{\beta }) &   \abs{\alpha}^2 - \abs{\beta}^2  \\
\end{matrix} \;  \right).
\end{equation}

\bigskip
A consequence of the Peter-Weyl theorem is that the functions   $D_{km}^{l} :\SU(2) \longrightarrow \C$  are a basis of $L^2 \big( \SU(2), d k \big)$. As in Lachieze-Rey \cite{Lachieze}, we choose the basis of $L^2 \big( \SU(2), d k \big)$ next: 
$$ \{\, \mathcal{T}^{\, l}_{km} \, ; \, l \in  \Z, l \geq 0 \, ; \,  k,m \in \tfrac{1}{2} \Z, k \equiv m \equiv \tfrac{l}{2} \text{ mod } 1, \text{ and } k,m \in [-\tfrac{l}{2},\tfrac{l}{2}] \, \},$$
where 
\begin{equation}\label{T1}
 \mathcal{T}^{\, l}_{km} (A) \, := \, \sqrt{\frac{l+1}{2 \pi^2}} \cdot D_{mk}^{\rfrac{l}{2}} \big(\Phi (A) \big),   \; \; \forall A \in \SU(2).
\end{equation}

\bigskip
Let $R \in SO(3)$, we can write $R$ in terms of its Euler angles:
\[
      R \, = \, ROT(\theta, \chi, \phi) \; : = \;
      \left(  \begin{matrix}
 \cos \theta   & \sin \theta  & 0  \\
-\sin \theta   & \cos \theta  & 0   \\
0 & 0 &   1   \\
\end{matrix}  \right)
 \left(  \begin{matrix}
 \cos \chi   & 0   & - \sin \chi  \\
0    & 1  & 0   \\
 \sin \chi  & 0 &   \cos \chi   \\
\end{matrix}  \right)
  \left(  \begin{matrix}
 \cos \phi   & \sin \phi  & 0  \\
-\sin \phi   & \cos \phi  & 0   \\
0 & 0 &   1   \\
\end{matrix}  \right),
\]
with  $\theta \in [0,2\pi), \chi \in [0,\pi], \phi \in [-2\pi,2\pi)$. In addition we have,
\begin{equation}
\label{WignerIg}
     \displaystyle  D_{km}^l \big( R  \big) \, = \, D_{km}^l \big( ROT(\theta, \chi, \phi)  \big) \, = \, e^{i k \theta } \, d_{km}^l (\chi) \,  e^{i m \phi},
\end{equation}
where $d_{km}^l (\chi)$ is \emph{Wigner small $d$-matrix}. We refer to the book  of Wigner  \cite{Wigner} in order to see how the coefficients $D_{km}^l \big( R  \big)$ are deduced. 

\bigskip
Later we use the following property of Wigner matrices. Let $R,T \in \SO(3)$, $l,k,m \in \tfrac{1}{2}\Z$, $l \geq 0$ and such that $k,m \in [-l,l]$. Then
\begin{equation}\label{WI10}
    D^l_{km} (R \circ T) \, = \, \sum_{\substack{ a=-l \\ a \in \tfrac{1}{2} \Z \\ a \equiv l \text{ mod } 1 }}^{l} D^l_{ka} (R ) \cdot D^l_{am} ( T).
\end{equation}

This is a consequence of the fact that for fixed $l$, the entries $ D^l_{bm}$ are the coefficients of a representation. 

\bigskip
In order to see how the basis changes  $\{ \mathcal{T}^{\, l}_{km} \}$  under rotations, we remember the action of $\SU(2)$ in the smooth functions defined in $\SU(2)$ $$ A \in \SU(2) \longmapsto  R_A : C^\infty \big(\SU(2)\big) \longrightarrow C^\infty \big(\SU(2)\big)$$  by
\begin{equation}\label{RotaenK}
R_A : f \longmapsto R_A f \text{ with } R_A f (K) \, := \, f (A^{-1} \, K), \; \; \forall f \in C^\infty \big(\SU(2)\big), \; \; \forall K \in \SU(2).
\end{equation}

\bigskip
Then, 
\begin{equation}\label{B2}
R_{A^{-1}} \,   \mathcal{T}^{\, l}_{km} (K) \, = \, 
   \sqrt{\frac{l+1}{2 \pi^2}} \cdot D_{mk}^{\rfrac{l}{2}} \big(\Phi (AK) \big),   \; \; \forall A,K \in \SU(2).
\end{equation}  

\bigskip
Replacing (\ref{T1}) and (\ref{B2}) in (\ref{WI10}) it is concluded that the basis  $\{ \mathcal{T}^{\, l}_{km} \}$ changes under rotations according to the next formula:
\begin{equation}\label{L19.1}
   R_{A^{-1}}  \mathcal{T}^{\, l}_{km}  \, = \,
    \sum_{\substack{ a=-\rfrac{l}{2} \\ a \in \tfrac{1}{2} \Z \\ a \equiv \rfrac{l}{2} \text{ mod } 1 }}^{\rfrac{l}{2}} {D}^{\rfrac{l}{2}}_{ma} \big ( \Phi(A)  \big)  \cdot  \mathcal{T}^{\, l}_{ka}, \; \, \forall A \in \SU(2). 
\end{equation}

\bigskip
Now we remember other  properties of Wigner matrix that we further use
\begin{equation}\label{A9}
 \mathfrak{U}_{km}^{l} \big(  K  \big) \, = \, 
 D_{km}^l \big( \Phi (  K )  \big), \; \; \forall K \in \SU(2).
\end{equation}
\begin{equation}\label{A7}
D_{km}^l \big( \Phi ( I K I^{-1} )  \big) \, = \, (-1)^{m-k} \cdot
D_{km}^l \big( \Phi (  K )  \big), \; \; \forall K \in \SU(2).
\end{equation}
\begin{equation}\label{A8}
D_{km}^l \big( \Phi (  K )^{-1}  \big) \, = \, (-1)^{m-k} \cdot  
D_{-m,-k}^l \big( \Phi (  K )  \big) \, = \, \overline{  D_{mk}^l \big( \Phi (  K )  \big)}, \; \; \forall K \in \SU(2).
\end{equation}


\bigskip
Let $l,k,m \in \tfrac{1}{2} \Z$, $l \geq 0$, $l \equiv k \equiv m \text{ mod } 1$ and  such that $k,m \in [-l,l]$. We denote by ${\scriptstyle\Gamma'_\infty}\backslash\Gamma$ the set of right cosets of $\Gamma'_{\infty}$ in $\Gamma$. We remember that $\Gamma'_\infty$ is the maximal unipotent subgroup, i.e.,
\[
     \Gamma'_\infty \, = \, \bigg\{
    \left(
\begin{matrix}
1 & z \\
0 & 1 \\
\end{matrix}  \right) \;  ; \; z \in Z[i]  \bigg\}.
\]

If $s \in \C$ is such that $\text{Re}(s) > 1$,  $f_{km}^l(\cdot, s) : \SL(2,\C) \longrightarrow \C $ is defined by the formula:
\begin{equation}\label{Deffff}
   f_{km}^l(g,s) \, := \, \overline{ D_{km}^l \big( \Phi_{ T( g )^{-1}} \big) } \; \text{Im} \,  g (j)^{1+s}.
\end{equation}
The Eisenstein series ${E}_{km}^l(\cdot,s): \SL(2,\C) \longrightarrow \C$ associated to $\Gamma$ at the cusp $\infty$ are defined by the following  formula:
\begin{equation}\label{ES}
  \displaystyle {E}_{km}^l (g,s) \, := \,
       \sum_{\sigma\,\in\,_{\scriptstyle\Gamma'_\infty}\backslash\Gamma} f_{km}^l (\sigma g,s ).
\end{equation}

\begin{rem}
In \cite{Otto} the previous series are denoted as  $\widehat{E}_{km}^l (g,s)$. Furthermore, we omit the factor $ \frac{1}{[\Gamma_{\infty} : \Gamma'_{\infty} ]}$ that appears in this work.
\end{rem}

The series $E_{km}^l (g,s) $ are well defined, they are convergent to $\Re(s) >1$ and are invariants under $\Gamma$. So we define the series $
 E_{km}^l(\cdot,s): \mathbb{H}^3 \longrightarrow \C$ associated to $\Gamma$ at the cusp  $\infty$  by the formula:
\begin{equation}\label{def0004}
   E_{km}^l (z+ \lambda j,s) \, := \, E_{km}^l \big(  n[z] a[\lambda],s ). 
\end{equation}

\bigskip
The identity (\ref{def0004}) implies that the series $E_{km}^l (z+ \lambda j,s)$ are well defined and are convergent in the half-plane  $\text{Re}(s) > 1$. Although  $E_{km}^l (z+ \lambda j,s)$  do not define in general (except for $l=k=m=0$) Eisenstein series in $\mathbb{H}^3$ (since they are not invariant under $\Gamma$), they do admit a Fourier expansion. 

\bigskip
In \cite{Langlands}  Langlands developed the general theory of Eisenstein series, proving that they satisfy a functional equation and the analytic or meromorphic continuation of ${E}_{km}^l (g,s)$ with respect $s$ to all the complex plane. 
Also, since the Bianchi groups are special, the analytic or meromorphic continuation can be obtained without using the Langlands work, see \cite{BruMoto}, page 44, for a proof in our particular case $\Gamma=\PSL(2, \Z[i])$.

\bigskip
In the following sections, the Fourier expansion of the series $E_{km}^l (z+ \lambda j,s)$ will be used, before this, it is necessary to recall some important facts. Let $ n \in  4\Z$, we define a Hecke character $\chi_n$ for  $Q(\sqrt{-1})$ as follows:
$$ \chi_n \big((c)\big) \, := \, \bigg( \frac{c}{\abs{c}} \bigg)^n, \; \; 0 \neq c \in Z[i],$$
and where  $(c)$ denotes the ideal generated by  $c$. The Hecke L function $ L(s, \chi_{n} ) $ are defined by:
\[
     L(s, \chi_{n} )  \, = \,  \sum_{   \mathfrak{a} }   \frac{ \chi_{n} (\mathfrak{a})  }{ \big[ \textit{N} (\mathfrak{a}) \big]^s  } =
       \prod_{ \substack{ p \in Z[i]  \\  p  \text{ primo}  } } \frac{1}{ 1 - \chi_{n} \big((p)\big) \cdot \abs{p}^{-2s}  },
\]
for $s \in \C$ such that $\text{Re}(s)>1$ and the sum runs over all  integral ideals 
 $\mathfrak{a}$ not zero of $Z[i]$. We denoted by $\zeta_K(s)$ to the Dedekind zeta function associated to $Q\sqrt{-1}$. In particular, $L(s, \chi_{0} ) =
\zeta_K(s)$.  

\bigskip
The Fourier expansion of series $ E_{k,m}^l (z+\lambda j,s)$ (see Guleska \cite{Lok04} or Watt \cite{Watt}, page 17) is given by:
$$   E_{k,m}^l (z+\lambda j,s) \, = \, \delta_{k,m} \, [\Gamma_{\infty} : \Gamma'_{\infty} ] \, B_{km}^l \, \lambda^{1+s} \, + \, (-1)^{m+\abs{m} } \pi  \, \frac{\Gamma(1+l-s) \, \Gamma(\abs{m}+s)}{\Gamma(1+l+s) \, \Gamma(1+\abs{m}-s)} \,
\frac{L\big( s,\chi_{2m} \big)}{L( s+1, \chi_{2m})} \,  \delta_{-k,m} \, B_{km}^l \, \lambda^{1-s} $$
$$ \, + \,  (-1)^{l+m} \,  i^{-k-m} \, (2 \pi)^s  \,    B_{km}^l \,    \Large \sum_{ 0 \neq w \in \Lambda'} 
\mathcal{D}_{m} (-w; s) \cdot \abs{ w }^{s-1} \cdot e^{-4 \pi i \, \text{Re}(w z )}    \Big( \frac{w}{\abs{w}}  \Big)^{-k-m} \hspace{3cm} $$
$$ \cdot  \Large \sum_{ u=0 }^{ l- \frac{1}{2}( \abs{k+m}+\abs{k-m})} (-1)^u \; 
 \xi_{-m}^l (-k,u)  \, \frac{\big(2 \pi \abs{w} \lambda \big)^{1+l-u}}{\Gamma(1+l+s-u)} 
\cdot  K_{s+l-\abs{k+m}-u} \big( 4 \pi \abs{w} \lambda \big),  $$
with $\Lambda \, := \, Z[i]$ and
\[
   \Lambda' \, := \, \{ z \in \C \; ; \; \Re(z \alpha) \in \tfrac{1}{2} \Z \; \; \forall \alpha \in \Lambda \} \, = \, \tfrac{1}{2} \, Z[i],
\]
\begin{equation}\label{D_k}
\mathcal{D}_k (w; s) \, := \,  
\sum_{ 
    \left( \begin{matrix} * & * \\ c & d \\ \end{matrix}  \right) \in  \mathcal{R}}
 \frac{1}{\abs{c}^{2+2s}} \bigg( \frac{c}{\abs{c}}  \bigg)^{2k} \cdot 
e^{4 \pi i \, \text{Re}(w \, \frac{d}{c} )}, \; \, \forall k \in \Z
\end{equation}
where
\[
  \mathcal{R} \, := \,  \left \{
    \left( \begin{matrix} * & * \\ c & d \\ \end{matrix}  \right) \in \Gamma   \; ; \;
     c \neq 0, \;  d  \text{ mod} \, c
   \right \},
\]
and 
\[
 \xi_k^l (v,u) \, := \, \frac{u! (2l-u)!}{(l+k)!(l-k)!} \, \binom{l- \frac{1}{2}( \abs{v+k}+\abs{v-k})}{u} 
\binom{l- \frac{1}{2}( \abs{v+k}-\abs{v-k})}{u}, \hspace{1cm}
B_{km}^l \, := \; \frac{\sqrt{(l+m)!(l-m)!}}{\sqrt{(l+k)!(l-k)!}}.
\]

\bigskip
We remember that for  $\nu \in \C$  a twisted divisor function is given by 
\begin{equation}\label{sigma_nu}
\sigma_{\nu} (w,p) \, := \, \frac{1}{4} \, \sum_{d \mid w} \chi_{4p} \big((d)\big) \cdot \abs{d}^{2 \nu}, \; \; \forall w \in Z[i], \; \; \forall p \in \Z.
\end{equation}

For any $k \in 2 \Z$ 
\begin{equation}\label{IE20}
\mathcal{D}_p (w; s) \, = \, \frac{16}{L(1+s,\chi_{2p}) }
\cdot \begin{cases}
\hfill \sigma_{-s} (2w,  \tfrac{p}{2} )& \text{if $ w \neq 0, \text{ Re}(s) > 0$} \\
  \tfrac{1}{4} \;   L (s, \chi_{2p}) & \text{if $w = 0, \text{ Re}(s)  < 1.$ }
 \end{cases}
\end{equation}


\section{Mellin transform}

The aim in this section is to prove the formula in (\ref{PPO2}). Before we require some  notation. Let $(q,\eta) \in \H^3$, $F_{(q,\eta)} := \{ \, n[q] a[\eta] K \, ; \, K \in \SU(2) \, \}$, the ``fiber'' of $(q,\eta)$ in $G$, and consider the natural diffeomorphism  $\Psi_{(q,\eta)} : F_{(q,\eta)}  \longrightarrow \SU(2)$ defined by $  \Psi_{(q,\eta)} \big(\,n[q] a[\eta] K \,\big) = K$. 

\bigskip
For  $\gamma \in G$, ${M}_\gamma(q+\eta j) : \SU(2)   \longrightarrow \SU(2)$ is the function defined as follows:
$$ {M}_\gamma(q+\eta j) \, K \, := \, T \big( \gamma \, n[q] a[\eta] K \big) \, = \,
T \big( \gamma \, n[q] a[\eta] \big) K, \; \; \forall K \in \SU(2). $$

\bigskip
Let $f \in C^\infty \big( \Gamma \backslash G \big)$, equivalently, $f:G  \longrightarrow \C$ (use the same letter) is a smooth function and  $\Gamma$ invariant. This is, $f(g) = f( \gamma g), \; \; \forall \gamma \in \Gamma
, \forall g \in G \; \; \, \iff$
\begin{equation}\label{L3}
  f \big( n[z] a[\lambda] K \big) \, = \,
  f \big( n[z'] a[\lambda'] {M}_\gamma (z+ \lambda j) \, K \big), \text{ where } z'+\lambda' j= \gamma(z+\lambda j), \; \; \forall z+\lambda j \in \H^3, \; \; \forall \gamma \in \Gamma, \; \; \forall K \in \SU(2).
\end{equation}

Given that $G \cong \H^3 \times \SU(2)$ also use the notation  
$$ f \big( z+\lambda j,  K \big) \, = \, f \big( \gamma(z+\lambda j), {M}_\gamma(z+ \lambda j) \, K \big),  \; \; \forall z+\lambda j \in \H^3, \; \; \forall \gamma \in \Gamma, \; \; \forall K \in \SU(2).$$

\bigskip
On the other hand, restricting  $f$ to the fibers we have that  $f \circ \Psi_{(z,\lambda)}^{-1}: \SU(2) \underset{\text{difeo.}}{\cong}  \S^3  \longrightarrow \C$  for every  $(z,\lambda) \in \mathbb{H}^3$, for which, suposing that  $f \circ \Psi_{(z,\lambda)}^{-1}$ admits a Laplace series, we have the following expansion:
\begin{equation}\label{L7}
  f \circ \Psi_{(z,\lambda)}^{-1} (K) \, = \,
   \sum_{\substack{ l \in  \Z \\ l \geq 0 }}  \; 
   \sum_{\substack{ k,m=-\rfrac{l}{2} \\ k,m \in \tfrac{1}{2} \Z \\ k \equiv m \equiv \rfrac{l}{2} \text{ mod } 1 }}^{\rfrac{l}{2}}   \widehat{f}^{\, l}_{k,m} (z+\lambda j) \cdot   \mathcal{T}^{\, l}_{km} (K) 
\end{equation}
where
\begin{equation}\label{L8}
  \widehat{f}_{k,m}^{\, l}  (z+\lambda j)   \, = \,
    2\pi^2   \int_{\SU(2)} f \circ \Psi_{(z,\lambda)}^{-1} (K)
       \cdot \overline{  \mathcal{T}^{ \,l}_{km} (K) } \; dk.
\end{equation}

\bigskip
Let $\gamma \in \Gamma$, for (\ref{L8})
\begin{equation}\label{L9}
  \widehat{f}_{k,m}^{\, l}  \big( \gamma(z+\lambda j) \big)   \, = \,
     2\pi^2  \int_{\SU(2)} f \circ \Psi_{\gamma(z,\lambda)}^{-1} (K)
       \cdot \overline{  \mathcal{T}^{\,l}_{km} (K) } \; dk,
\end{equation}
however for  (\ref{L3})
\begin{equation}\label{L10}
 f \circ \Psi_{\gamma(z,\lambda)}^{-1}   \, = \,
       f \circ \Psi_{(z,\lambda)}^{-1} \circ  {M}_\gamma(z+\lambda j)^{-1}.
\end{equation}
Replacing (\ref{L10}) in (\ref{L9})
\begin{equation}\label{L11}
  \widehat{f}_{k,m}^{\, l}  \big( \gamma(z+\lambda j) \big)   \, = \,
      2\pi^2  \int_{\SU(2)} f \circ \Psi_{(z,\lambda)}^{-1} 
      \big[ {M}_\gamma(z+\lambda j)^{-1}  \, K \big]
       \cdot \overline{  \mathcal{T}^{\,l}_{km} (K) } \; dk.
\end{equation}
We make the variable change in $\SU(2)$ given by 
\begin{equation}\label{L12}
  {M}_\gamma(z+\lambda j)^{-1}  \, K \, = \, K'.
\end{equation}

\bigskip
However if $\gamma = 
 \left( \begin{matrix}
a  &  b \\
c  &  d  
\end{matrix}  \right) \in \SL(2,\C)$ is direct to verify that
$$  {M}_\gamma(z+ \lambda j)  \, K' \, = \, \frac{1}{v} \,  
 \left( \begin{matrix}
\overline{c}\overline{z}+\overline{d}    &  - \lambda \,\overline{c} \\
 \lambda \,  c  &  cz+d 
\end{matrix}  \right)  K', \; \;  \forall K' \in \SU(2),$$
where $v:= \sqrt{\lambda^2 \abs{c}^2 + \abs{cz+d}^2}.$ 
Therefore  
\begin{equation}\label{MG}
 {M}_\gamma(z+ \lambda j) \, K' \, = \, T(\gamma g)  \; K', \; \;  \forall K' \in \SU(2),
\end{equation}
with $g \,  := \, n[z] a[\lambda]$.

\bigskip
We revisit the integral of $ \widehat{f}_{k,m}^{\, l}  \big( \gamma(z+ \lambda j) \big) $. Replacing  (\ref{L12}) in the equation (\ref{L11}) 
\begin{equation}\label{L17}
 \widehat{f}_{k,m}^{\, l}  \big( \gamma(z+\lambda j) \big)   \, = \,
      2\pi^2  \int_{\SU(2)} f \circ \Psi_{(z,\lambda)}^{-1} (K')
            \cdot \overline{  \mathcal{T}^{\, l}_{km} \big(  {M}_\gamma(z+\lambda j) K'  \big)  } \; dk'.
\end{equation}

Using the identity  (\ref{MG}) and (\ref{RotaenK}) it is known that
\begin{equation}\label{L18}
  \mathcal{T}^{\, l}_{km} \big(  {M}_\gamma(z+\lambda j) K'  \big) \, = \,
 R_{T(\gamma g )^{-1}}  \mathcal{T}^{\, l}_{km} ( K').
\end{equation}

\bigskip
As see in  (\ref{L19.1}), the basis of the   $\{ \mathcal{T}^{\, l}_{km} \}$ changes under rotations according to the following formula: 
\begin{equation}\label{L19}
  R_{T(\gamma g )^{-1}}  \mathcal{T}^{\, l}_{km}   \, = \,
      \sum_{\substack{ u=-\rfrac{l}{2} \\ u \in \tfrac{1}{2} \Z \\ u \equiv \rfrac{l}{2} \text{ mod } 1 }}^{\rfrac{l}{2}} {D}^{\rfrac{l}{2}}_{mu} \big ( \Phi_{T(\gamma g )}  \big)  \cdot  \mathcal{T}^{\, l}_{ku}. 
\end{equation}

Summarizing, of (\ref{L18}) and (\ref{L19}),
\begin{equation}\label{L20}
 \mathcal{T}^{\, l}_{km} \big(  {M}_\gamma(z+\lambda j) K'  \big) \, = \,
      \sum_{\substack{ u=-\rfrac{l}{2} \\ u \in \tfrac{1}{2} \Z \\ u \equiv \rfrac{l}{2} \text{ mod } 1 }}^{\rfrac{l}{2}} {D}^{\rfrac{l}{2}}_{mu} \big ( \Phi_{T(\gamma g )}  \big)  \cdot  \mathcal{T}^{\, l}_{ku} (K'). 
\end{equation}

\bigskip
Replacing  (\ref{L20}), (\ref{L8}) and (\ref{A8}) in (\ref{L17}) it is obtained
\begin{lemm}\label{LP}
Let $z+ \lambda j \in \mathbb{H}^3$, $f \in C_0^\infty \big(\Gamma \backslash G \big)$, $\gamma \in \Gamma$. For $l \in  \Z$, $ l \geq 0$,  $k,m  \in \tfrac{1}{2} \Z$,   $k,m \in [-\tfrac{l}{2},\tfrac{l}{2}]$  and  $ k \equiv  m \equiv \tfrac{l}{2} \text{ mod } 1$ it is true that 
\[
   \widehat{f}_{k,m}^{\, l}  \big( \gamma(z+\lambda j) \big)   \, = \,  \sum_{\substack{ u=-\rfrac{l}{2} \\ u \in \tfrac{1}{2} \Z \\ u \equiv \rfrac{l}{2} \text{ mod } 1 }}^{\rfrac{l}{2}} (-1)^{u-m} \;
      \overline{{D}^{\rfrac{l}{2}}_{-u,-m} \big ( \Phi_{ T(\gamma g )^{-1}} \big)} \cdot     \widehat{f}_{k,u}^{\, l}  ( z+\lambda j).
\]
\end{lemm}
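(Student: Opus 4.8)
The plan is to start from the integral representation (\ref{L17}) for $\widehat{f}_{k,m}^{\,l}\big(\gamma(z+\lambda j)\big)$ and reduce its integrand to a finite linear combination of the Fourier coefficients $\widehat{f}_{k,u}^{\,l}(z+\lambda j)$ introduced in (\ref{L8}). First I would take the complex conjugate of the rotation formula (\ref{L20}) and write
\[
 \overline{\mathcal{T}^{\,l}_{km}\big(M_\gamma(z+\lambda j)K'\big)}
 \, = \, \sum_{u}\overline{D^{\rfrac{l}{2}}_{mu}\big(\Phi_{T(\gamma g)}\big)}\cdot\overline{\mathcal{T}^{\,l}_{ku}(K')},
\]
where $u$ ranges over $u\in\tfrac{1}{2}\Z$, $u\equiv\rfrac{l}{2}\text{ mod }1$, $u\in[-\rfrac{l}{2},\rfrac{l}{2}]$ exactly as in (\ref{L20}). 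Substituting this into (\ref{L17}) and noting that the coefficients $\overline{D^{\rfrac{l}{2}}_{mu}(\Phi_{T(\gamma g)})}$ do not depend on the integration variable $K'$, I would interchange the finite sum with the integral.

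Next I would recognize each resulting integral as a Fourier coefficient: by the very definition (\ref{L8}),
\[
 2\pi^2\int_{\SU(2)} f\circ\Psi_{(z,\lambda)}^{-1}(K')\cdot\overline{\mathcal{T}^{\,l}_{ku}(K')}\;dk' \, = \, \widehat{f}_{k,u}^{\,l}(z+\lambda j),
\]
so that after the interchange one arrives at the intermediate identity
\[
 \widehat{f}_{k,m}^{\,l}\big(\gamma(z+\lambda j)\big) \, = \, \sum_{u}\overline{D^{\rfrac{l}{2}}_{mu}\big(\Phi_{T(\gamma g)}\big)}\cdot\widehat{f}_{k,u}^{\,l}(z+\lambda j).
\]
It then remains only to put the matrix coefficient into the stated normalized form. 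Writing $W:=T(\gamma g)$ and using that $\Phi$ is a homomorphism, so $\Phi_{W^{-1}}=\Phi_W^{-1}$, I would apply (\ref{A8}) with the index pair $(-u,-m)$ in place of $(k,m)$ to obtain $D^{\rfrac{l}{2}}_{-u,-m}(\Phi_W^{-1})=(-1)^{u-m}D^{\rfrac{l}{2}}_{mu}(\Phi_W)$, and hence
\[
 (-1)^{u-m}\,\overline{D^{\rfrac{l}{2}}_{-u,-m}\big(\Phi_{T(\gamma g)^{-1}}\big)} \, = \, (-1)^{2(u-m)}\,\overline{D^{\rfrac{l}{2}}_{mu}\big(\Phi_{T(\gamma g)}\big)}.
\]

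The only point requiring care, and the one I would flag as the crux of the bookkeeping, is the collapse of the double sign. Since both $u$ and $m$ are congruent to $\rfrac{l}{2}\text{ mod }1$, their difference $u-m$ is an integer, so $(-1)^{2(u-m)}=1$ and the coefficient $\overline{D^{\rfrac{l}{2}}_{mu}(\Phi_{T(\gamma g)})}$ is identically equal to $(-1)^{u-m}\overline{D^{\rfrac{l}{2}}_{-u,-m}(\Phi_{T(\gamma g)^{-1}})}$. Inserting this into the intermediate identity produces the claimed formula. I do not anticipate any genuine difficulty beyond this: the whole argument is an interchange of a finite sum with an integral, the recognition of the Fourier coefficient via (\ref{L8}), and then a careful tracking of indices and signs through the symmetry relation (\ref{A8}), where the half-integrality of $u$ and $m$ is precisely what makes the sign work out.
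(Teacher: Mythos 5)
Your proposal is correct and follows exactly the paper's own argument: the paper proves Lemma (\ref{LP}) precisely by substituting (\ref{L20}) and (\ref{L8}) into (\ref{L17}) and then rewriting the coefficient via (\ref{A8}), which is what you do, including the key observation that $u-m\in\Z$ (so the double sign $(-1)^{2(u-m)}$ collapses to $1$) and the use of the homomorphism property $\Phi_{W^{-1}}=\Phi_W^{-1}$. Nothing is missing.
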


\bigskip
Some known facts about the Mellin transform are recalled. For every  $\lambda \in \R$ with  $\lambda > 0$ consider the surface:
\[
    \mathcal{S}_\lambda \, := \, \{ (z+\lambda j,I)   \; ; \;  z \in \C  \}  \subset  G.
\]
$\mathcal{S}_\lambda$ passes to the quotient  $ \Gamma \backslash G  $ as a closed orbifold denoted by $\widetilde{T}_\lambda $.

\begin{Definition-non}
Let $\lambda \in \R$ such that  $\lambda > 0$, the probability measure  $\nu (\lambda)$  located at   $\widetilde{T}_\lambda \subset \Gamma \backslash G $ are defined as follows:
 \[
     \nu (\lambda ) (f) \, : = \,   \int_{\R^2 / Z[i]}  f ( z+ \lambda j, I ) \, dxdy,
     \; \; \; \forall f \in C^{\infty}_{0} ( \Gamma \backslash G  ).
 \]
\end{Definition-non}

\begin{defi}\label{Mellinf}
If $s \in \C$ such that  $\text{Re}(s) >1$, the Mellin transform  $\mathfrak{M} (f,s)$ of $f \in  C^{\infty}_{0} ( \Gamma \backslash G  )$ is defined by the following equality:
\[
\mathfrak{M}(f,s)    \, := \,  \int_0^\infty \nu (\lambda)(f) \, \lambda^{s-2}  d\lambda  \, = \,   \int_0^\infty \int_{\R^2 / Z[i] } f ( z + \lambda j, I ) \, \lambda^{s+1}  \; \frac{dxdyd\lambda}{\lambda^3}, \; \; z=x+iy.
\]
\end{defi}

\bigskip
Let  $  S \, = \, \{ \, (z,\lambda) \in \mathbb{H}^3 ; z \in [-\tfrac{1}{2}, \tfrac{1}{2} ]^2 \,  \}$, the next identity is true
\begin{equation}\label{LS3}
      \displaystyle S= \bigcup_{\sigma\,\in\,_{\scriptstyle\Gamma'_\infty}\backslash\Gamma} \sigma(F),
\end{equation}
where $F \, := \, \{ (z,\lambda) \in \H^3 ; z \in [-\tfrac{1}{2}, \tfrac{1}{2} ]^2 \,, \, \abs{z}^2 + \lambda^2 \geq 1  \}$.

\begin{prop}\label{PPr} Let  $f \in C^\infty_0 \big( \Gamma \backslash G  \big)$, $s \in \C$ such that  $\text{Re}(s)>1$. Then
\[
 \mathfrak{M}(f,s)   \, = \,  \sum_{\substack{ l \in \Z \\ l \geq 0 }}  \;  \sum_{\substack{ k,u=-\rfrac{l}{2} \\ k \equiv u \equiv \rfrac{l}{2} \text{ mod } 1 }}^{\rfrac{l}{2}}  \, (-1)^{u-k} \cdot \mathcal{T}^{\, l}_{kk} (I)     \int_{F}    \widehat{f}_{k,u}^{\, l} ( z+\lambda j) \cdot  {E}_{-u,-k}^{\rfrac{l}{2}} (z +\lambda j,s)  \; \frac{dx dy d\lambda}{\lambda^3}.
\]
\end{prop}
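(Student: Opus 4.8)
The plan is to run a Rankin--Selberg unfolding adapted to the frame bundle: start from the Mellin integral over the cusp strip, expand $f$ along the fibres, unfold the strip against the fundamental domain $F$ so that the sum over $\Gamma'_\infty\backslash\Gamma$ reassembles the vector-valued Eisenstein series, and finally use a diagonality property of $\mathcal{T}^l_{km}(I)$ to cut the index $m$ down to $m=k$.

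First I would insert the fibre Laplace expansion (\ref{L7}) evaluated at $K=I$, namely $f(z+\lambda j,I)=\sum_{l,k,m}\widehat{f}^{\,l}_{k,m}(z+\lambda j)\,\mathcal{T}^{\,l}_{km}(I)$, into Definition \ref{Mellinf}. Interchanging the Laplace series (absolutely convergent by smoothness of $f$ on the compact fibre $\SU(2)\cong\S^3$) with the integral, and recognizing the domain $\R^2/\Z[i]\times(0,\infty)$ as the strip $S$ of (\ref{LS3}) — a fundamental domain for $\Gamma'_\infty\backslash\H^3$ — gives
\[
\mathfrak{M}(f,s)=\sum_{l,k,m}\mathcal{T}^{\,l}_{km}(I)\int_{S}\widehat{f}^{\,l}_{k,m}(z+\lambda j)\,\lambda^{1+s}\,\frac{dx\,dy\,d\lambda}{\lambda^3}.
\]

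Next I would unfold using $S=\bigcup_{\sigma\in{}_{\Gamma'_\infty}\backslash\Gamma}\sigma(F)$, splitting the integral over $S$ into integrals over the essentially disjoint translates $\sigma(F)$ and substituting the point as $\sigma(z+\lambda j)$ with $z+\lambda j\in F$ and $g=n[z]a[\lambda]$, using the $\SL(2,\C)$-invariance of $\frac{dx\,dy\,d\lambda}{\lambda^3}$. The height factor becomes $\Im\big((\sigma g)(j)\big)^{1+s}$, while Lemma \ref{LP} rewrites $\widehat{f}^{\,l}_{k,m}\big(\sigma(z+\lambda j)\big)$ as $\sum_u(-1)^{u-m}\overline{D^{\rfrac{l}{2}}_{-u,-m}\big(\Phi_{T(\sigma g)^{-1}}\big)}\,\widehat{f}^{\,l}_{k,u}(z+\lambda j)$. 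The key recombination is that the conjugated Wigner coefficient times the transported height is exactly $f^{\rfrac{l}{2}}_{-u,-m}(\sigma g,s)$ of (\ref{Deffff}); since $\widehat{f}^{\,l}_{k,u}$ does not depend on $\sigma$, summing over $\sigma\in{}_{\Gamma'_\infty}\backslash\Gamma$ builds the Eisenstein series (\ref{ES}), yielding
\[
\int_{S}\widehat{f}^{\,l}_{k,m}(z+\lambda j)\,\lambda^{1+s}\,\frac{dx\,dy\,d\lambda}{\lambda^3}=\sum_u(-1)^{u-m}\int_{F}\widehat{f}^{\,l}_{k,u}(z+\lambda j)\,E^{\rfrac{l}{2}}_{-u,-m}(z+\lambda j,s)\,\frac{dx\,dy\,d\lambda}{\lambda^3}.
\]
To collapse the index I would compute $\mathcal{T}^{\,l}_{km}(I)$: by (\ref{T1}) it equals $\sqrt{\tfrac{l+1}{2\pi^2}}\,D^{\rfrac{l}{2}}_{mk}(\Phi_I)$, and (\ref{AuxM1}) with $\alpha=i,\beta=0$ gives $\Phi_I=\mathrm{diag}(-1,-1,1)$, a rotation about the $z$-axis with zero middle Euler angle; by (\ref{WignerIg}) and $d^{\rfrac{l}{2}}_{mk}(0)=\delta_{mk}$ this forces $\mathcal{T}^{\,l}_{km}(I)=0$ for $m\neq k$. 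Hence the $m$-summation survives only at $m=k$, producing precisely the factor $(-1)^{u-k}\mathcal{T}^{\,l}_{kk}(I)$ and the Eisenstein series $E^{\rfrac{l}{2}}_{-u,-k}$ of the statement.

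I expect the main obstacle to be the rigorous justification of the two interchanges of summation with integration: integrating the fibre Laplace series term by term (covered by the smoothness and compact support of $f$), and exchanging $\sum_\sigma$ with $\int_F$, which rests on the normal convergence of the Eisenstein series for $\Re(s)>1$ together with the compact support of $f$ bounding the relevant terms. The substantive computational point — that the transported height recombines with Lemma \ref{LP} into $f^{\rfrac{l}{2}}_{-u,-m}(\sigma g,s)$ — is a direct comparison with (\ref{Deffff}), and the diagonality of $\mathcal{T}^{\,l}_{km}(I)$ is the clean algebraic fact that finishes the argument.
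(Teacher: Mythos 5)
Your proposal is correct and takes essentially the same route as the paper's proof: fibre expansion (\ref{L7}), unfolding $S=\bigcup_{\sigma}\sigma(F)$ from (\ref{LS3}), change of variables to $F$, Lemma \ref{LP}, and reassembly of the sum over $\sigma\in{\scriptstyle\Gamma'_\infty}\backslash\Gamma$ into the Eisenstein series via (\ref{Deffff}) and (\ref{ES}). The only differences are cosmetic: the paper invokes the diagonality of $\mathcal{T}^{\,l}_{km}(I)$ at the outset (equation (\ref{L19.5})) so that only $m=k$ survives before unfolding, whereas you carry the full double sum through the unfolding and collapse it at the end, making explicit the Euler-angle computation ($\Phi_I=\mathrm{diag}(-1,-1,1)$, $d^{\rfrac{l}{2}}_{mk}(0)=\delta_{mk}$) that the paper leaves implicit.
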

\begin{proof}
 It is first observed that for  (\ref{L7})
\begin{equation}\label{L19.5}
f \big( z+\lambda j, I \big) \, = \,
  \sum_{\substack{ l \in \Z \\ l \geq 0 }}  \;   \sum_{\substack{ k=-\rfrac{l}{2} \\ k \in \tfrac{1}{2} \Z  \\ k \equiv  \rfrac{l}{2} \text{ mod } 1 }}^{\rfrac{l}{2}}  \widehat{f}^{\, l}_{k,k} (z+\lambda j) \cdot \mathcal{T}^{\, l}_{kk} (I).
\end{equation}

There is the following chain of equalities:   
\begin{align}
 \mathfrak{M}(f,s)  & \, = \, \int_0^\infty \int_{[-\rfrac{1}{2}, \rfrac{1}{2} ]^2}   f(z +\lambda j, I) \; \text{Im} \, (z+\lambda j)^{1+s} \;  \frac{dx dy d\lambda}{\lambda^3}      &&  \text {definition  (\ref{Mellinf})}  \nonumber \\
             & \, = \,  \sum_{\substack{ l \in \Z \\ l \geq 0 }}  \;  \sum_{\substack{ k=-\rfrac{l}{2} \\ k \in \tfrac{1}{2} \Z  \\ k \equiv  \rfrac{l}{2} \text{ mod } 1 }}^{\rfrac{l}{2}}   \,  \mathcal{T}^{\, l}_{kk} (I)   \int_{S}   \widehat{f}_{k,k}^{\, l} (z+\lambda j)   \;    \text{Im} \, (z+ \lambda j)^{1+s} \; \frac{dx dy d\lambda}{\lambda^3}      && \text {by  (\ref{L19.5})}  \nonumber \\
             & \, = \,  \sum_{\substack{ l \in \Z \\ l \geq 0 }}  \;  \sum_{\substack{ k=-\rfrac{l}{2} \\ k \in \tfrac{1}{2} \Z  \\ k \equiv  \rfrac{l}{2} \text{ mod } 1 }}^{\rfrac{l}{2}}   \,  \mathcal{T}^{\, l}_{kk} (I) \sum_{\sigma\,\in\,_{\scriptstyle\Gamma'_\infty}\backslash\Gamma}
              \int_{\sigma(F)}   \widehat{f}_{k,k}^{\, l} (z+\lambda j)   \;      \text{Im} \, (z+\lambda j)^{1+s} \; \cfrac{dx dy d\lambda}{\lambda^3}.      && \text {by  (\ref{LS3})}  \label{LS4}
\end{align}

\bigskip
Now we take care of the integral obtained in the equation (\ref{LS4}), for that we consider the next variable change:
\begin{equation*}
  z+\lambda j = \sigma (z'+\lambda' j), \; z'=x'+iy.
\end{equation*}
As  $\sigma: F \longrightarrow \sigma(F)$ is an isometry, it is followed that:
\begin{align}
 \int_{\sigma(F)}   \widehat{f}_{k,k}^{\, l} (z+\lambda j)   \; \text{Im} \, (z+\lambda j)^{1+s} \; \frac{dx dy d\lambda}{\lambda^3}  & \, = \,
      \int_{F}  \widehat{f}_{k,k}^{\, l} \big( \sigma (z'+\lambda' j) \big)  \; \text{Im} \, \sigma (z'+\lambda' j)^{1+s} \; \frac{dx' dy' d\lambda'}{\lambda'^3}.
                 &&  \nonumber 
\end{align}
However for the lemma (\ref{LP}) 
\begin{equation}\label{LS8}
   \int_{\sigma(F)}  \widehat{f}_{k,k}^{\, l}(z+\lambda j)  \; \text{Im} \, (z+\lambda j)^{1+s} \; \frac{dx dy d\lambda}{\lambda^3}    \, = \,
  \sum_{\substack{ u=-\rfrac{l}{2} \\ u \in \tfrac{1}{2} \Z \\ u \equiv \rfrac{l}{2} \text{ mod } 1 }}^{\rfrac{l}{2}} (-1)^{u-k} \int_{F}  
        \overline{{D}^{\rfrac{l}{2}}_{-u,-k} \big ( \Phi_{ T(\sigma g )^{-1}} \big)} \cdot   \widehat{f}_{k,u}^{\, l}  ( z+\lambda j) \; \text{Im} \, \sigma (z+\lambda j)^{1+s}   \; \frac{dx dy d\lambda}{\lambda^3}.
\end{equation}

\bigskip
Replacing the identity (\ref{LS8}) in the equation (\ref{LS4}) it is obtained the first identity of the following:
\begin{align}
\mathfrak{M}(f,s)   & \, = \,  \sum_{\substack{ l \in \Z \\ l \geq 0 }}   \; \sum_{\substack{ k,u=-\rfrac{l}{2} \\ k,u \in \tfrac{1}{2} \Z \\ k \equiv u \equiv  \rfrac{l}{2} \text{ mod } 1 }}^{\rfrac{l}{2}}     \, (-1)^{u-k} \cdot \mathcal{T}^{\, l}_{kk} (I)
                \int_{F}   \widehat{f}_{k,u}^{\, l} (z+\lambda j)
                  \Bigg[ \sum_{\sigma\,\in\,_{\scriptstyle\Gamma'_\infty}\backslash\Gamma} \overline{{D}^{\rfrac{l}{2}}_{-u,-k} \big ( \Phi_{ T(\sigma  g )^{-1}} \big)} \;        \text{Im} \, \sigma (z+\lambda j)^{1+s}  \Bigg]
                 \frac{dx dy d\lambda}{\lambda^3}        &&  \nonumber \\
      & \, = \, \sum_{\substack{ l \in \Z \\ l \geq 0 }}  \;    \sum_{\substack{ k,u=-\rfrac{l}{2} \\ k,u \in \tfrac{1}{2} \Z \\ k \equiv u \equiv \rfrac{l}{2} \text{ mod } 1 }}^{\rfrac{l}{2}}  \, (-1)^{u-k} \cdot \mathcal{T}^{\, l}_{kk} (I)
                 \int_{F}    \widehat{f}_{k,u}^{\, l} ( z+\lambda j)  \cdot  {E}_{-u,-k}^{\rfrac{l}{2}} (z +\lambda j,s)       \; \frac{dx dy d\lambda}{\lambda^3}.
 && \nonumber
\end{align}
\end{proof}

\bigskip
\bigskip
\bigskip
\bigskip
\bigskip
\section{Cusp forms}
We remember that the aim in this section is to encounter a formula for the inner product  $\Big(  F_{p,q}^l  ,  d\epsilon_{it}  \Big)$, we also see that 
 \[
\lim_{t \rightarrow \infty}  \Big(  F_{p,q}^l  ,  d\epsilon_{it}  \Big) \, = \, 0.
\]

\bigskip
The following version of cusp forms is taken like Bruggeman and Motohashi \cite{BruMoto}, see formula 5.35 and lemma $5.1$. It is also useful for the next definition the lemma $5.2.1$ in Guleska \cite{Lok04}.

\begin{defi}
Let  $l,p,q \in \tfrac{1}{2} \mathbb{Z}$, $l \geq 0$,  such that $p,q \in [-l,l]$,  $p \equiv q \equiv l \text{ mod } 1$. The cusp form  $F_{p,q}^l: \Gamma \backslash G \longrightarrow \C$ associated to the value   $\widehat{\lambda}$ (with  $\tfrac{1}{4} + r^2 = \widehat{\lambda}$) is given for the following formula:
\begin{equation}\label{FC}
 F_{p,q}^l \big(  z+\lambda j,  K \big) \, = \, F_{p,q}^l \big( n[z]a[\lambda]  K \big)  \, = \,\sum_{0 \neq w \in \Lambda}   \sum_{\substack{ a=-l \\ a \in  \rfrac{1}{2} \mathbb{Z} \\ a \equiv l \text{ mod } 1 }}^l c(w) \, j_a^w (ir ; \lambda ) \, e^{2 \pi i \, \Re(wz) } \; \Phi_{aq}^l (K),
\end{equation} 
\end{defi}
where  
\begin{equation} \label{FC0}
 j_a^w (s ; \lambda ) \, := \, 2 \, (-1)^{l-p} \; \pi^{s} \, \abs{ w }^{s-1} \, \bigg( \frac{i w}{\abs{w}}  \bigg)^{-p-a} \cdot \mathcal{W}_a^l (s,p; \tfrac{\abs{w} \lambda}{2} ),
\end{equation}
moreover
\begin{equation} \label{FC01}
\mathcal{W}_a^l (s,p;  \lambda ) \, := \,  \sum_{ v=0 }^{ l- \frac{1}{2}( \abs{a+p}+\abs{a-p})} (-1)^v \; \xi_p^l (a,v) \, \frac{\big(2\pi  \lambda \big)^{l+1-v}}{\Gamma(1+l+s-v)} \cdot K_{s+l-\abs{a+p}-v} \big( 4 \pi  \lambda \big).   
\end{equation}

\bigskip
The associated L-function to the cusp form $F_{p,q}^l$ twisted by  $\chi_m$, an Hecke character for  $K_D$,  is given by the formula:
\[
     L(s,F_{p,q}^l,\chi_{m} )  \, = \,  \sum_{  (w)  }     \frac{ c(w) \cdot \chi_{m} \big((w)\big)  }{ \abs{w}^{2s}  } \, = \,
       \prod_{ \substack{ p \in Z[i]  \\  p  \text{ primo}  } } \Bigg( 1 - \frac{ c(p) \cdot \chi_m(p) }{ \abs{p}^{2s} } + \frac{ \chi_m(p)^2}{ \abs{p}^{4s}} \Bigg)^{-1},
\] 
the sum is over all the integral ideals $(w)$ not zero of $Z[i]$.
 
\bigskip
The cusp form in (\ref{FC}) for $K=I$ is given by
\begin{equation}\label{FC02}
F_{p,q}^l \big(  z+\lambda j, I \big)
   \, = \,  \sum_{0 \neq w \in \Lambda} \sum_{\substack{ a=-l \\ a \in  \rfrac{1}{2} \mathbb{Z} \\ a \equiv l \text{ mod } 1 }}^l c(w) \, j_a^w (ir ; \lambda ) \, e^{2 \pi i \, \text{Re}(wz) } \; \Phi_{aq}^l (I)   \, = \,  \sum_{0 \neq w \in \Lambda'}      c(w) \, j_q^w (ir ; \lambda ) \, e^{2 \pi i \, \Re(wz) }.
\end{equation}

Later, of (\ref{FC02}) and (\ref{FC}) it is seen that the following equation is valid for $K \in \SU(2)$ arbitrary  \begin{equation}\label{FC1}
 F_{p,q}^l \big(  z+\lambda j,  K \big) \, = \, \sum_{\substack{ a=-l \\ a \in  \rfrac{1}{2} \mathbb{Z} \\ a \equiv l \text{ mod } 1 }}^l \Phi_{aq}^l (K) \cdot F_{p,a}^l \big(  z+\lambda j, I \big).
\end{equation}

\bigskip
So for the identity in (\ref{Medida}) we have that $   \Big(  F_{p,q}^l  ,  d\epsilon_{it}  \Big) \hspace{10cm} $
\begin{align}
   & \, = \, \int_{\Gamma \backslash G }   F_{p,q}^l \big(   z+\lambda j, K \big)  \cdot  E \big( z+ \lambda j, it \big)
 \;  \sum_{\substack{ L \in  \mathbb{Z} \\ L \geq 0 }}  \; \; \sum_{\substack{ k,m=-\rfrac{L}{2} \\ k \equiv m \equiv \rfrac{L}{2} \text{ mod } 1 }}^{\rfrac{L}{2}}  (-1)^{m-k} \cdot \mathcal{T}^{\, L}_{kk} (I) \cdot  {E}_{-m,-k}^{\rfrac{L}{2}} \big(z +\lambda j,-it \big)    \; \cdot  \; \overline{\mathcal{T}_{km}^L (K)} \, d g 
 && \nonumber \\  
          & \, = \,  \int_{ \Gamma \backslash \mathbb{H}^3 }  E \big( z+ \lambda j,  it \big)
   \sum_{\substack{ a=-l \\ a \in  \rfrac{1}{2} \mathbb{Z} \\ a \equiv l \text{ mod } 1 }}^l    \sum_{\substack{ L \in  \mathbb{Z} \\ L \geq 0 }}  \; \; \sum_{\substack{ k,m=-\rfrac{L}{2} \\ k \equiv m \equiv \rfrac{L}{2} \text{ mod } 1 }}^{\rfrac{L}{2}}   (-1)^{m-k} \cdot \mathcal{T}^{\, L}_{kk} (I)  \cdot  {E}_{-m,-k}^{\rfrac{L}{2}} \big(z +\lambda j,-it\big)   \cdot F_{p,a}^l \big(  z+\lambda j, I \big) &&  \nonumber \\ 
         & \hspace{6cm} \Bigg[ \int_{\SU(2) }  \Phi_{aq}^l (K)   \cdot   \overline{\mathcal{T}_{km}^L (K)} \; d k \Bigg]  d V. \hspace{4cm} \text{by } (\ref{FC1})  && \label{Fcuspidal} 
 \end{align}

\bigskip
Seen the integral between brackets, for the identities (\ref{T1}), (\ref{A11}) and (\ref{A10}) it is true that 
\begin{equation}\label{Fcusp2} 
 \int_{\SU(2) }  \Phi_{aq}^l (K)  
   \cdot   \overline{\mathcal{T}_{km}^L (K)} \; d k  \, = \,
    (-1)^{m-k} \cdot \mathcal{T}_{qq}^{2l} (I)   \cdot  \frac{\sqrt{(l+q)!(l-q)!}}{\sqrt{(l+a)!(l-a)!}} \cdot \frac{1}{2l+1} \cdot \delta_{l,\frac{L}{2}} \,  \delta_{a,m} \, \delta_{q,k}.
\end{equation}

 \bigskip
Replacing (\ref{Fcusp2}) in (\ref{Fcuspidal}) we obtain that 
\begin{equation}\label{Im1}
 \Big(  F_{p,q}^l  ,  d\epsilon_{it}  \Big) \, = \, \frac{1}{2 \pi^2}  \sum_{\substack{ a=-l \\ a \in  \rfrac{1}{2} \mathbb{Z} \\ a \equiv l \text{ mod } 1 }}^l 
  \frac{\sqrt{(l+q)!(l-q)!}}{\sqrt{(l+a)!(l-a)!}} \,
\int_{ \Gamma \backslash \mathbb{H}^3 }  E \big( z+ \lambda j,  it \big) \cdot
    {E}_{-a,-q}^{l} \big(z +\lambda j,-it\big)  \cdot F_{p,a}^l \big(  z+\lambda j, I \big)  \,  d V .
\end{equation}

\bigskip
We make the following change of variable 
$$ \sigma ( z + \lambda j ) \, = \, z'+\lambda' j \; \; \iff \; \;  z + \lambda j  \, = \, \sigma^{-1} ( z'+\lambda' j).$$
Therefore
$$ n[z]a[\lambda] \, = \, \sigma^{-1} \, n[z']a[\lambda'] \, A^{-1}, $$
where $A:= T \big(  \sigma^{-1} \, n[z']a[\lambda'] \big)$. Then the inner product is given by 
$$ \frac{1}{2 \pi^2}   \sum_{\substack{ a=-l \\ a \in  \rfrac{1}{2} \mathbb{Z} \\ a \equiv l \text{ mod } 1 }}^l \frac{\sqrt{(l+q)!(l-q)!}}{\sqrt{(l+a)!(l-a)!}} \hspace{8cm} $$
$$  \int_{ \sigma (   \Gamma \backslash \mathbb{H}^3 ) }  E \big( \sigma^{-1} ( z'+\lambda' j), it \big) \,
    \sum_{\sigma\,\in\,_{\scriptstyle\Gamma'_\infty}\backslash\Gamma}   \overline{ D_{-a,-q}^l \big(  \Phi_{ T(  n[z']a[\lambda']  A^{-1} )^{-1}} \big)} \; \Im \, (  z'+\lambda' j)^{1-it}  \cdot  F_{p,a}^l \big(  \sigma^{-1} \, n[z']a[\lambda'] \, A^{-1} \big) 
 \; \frac{dx' dy' d\lambda'}{\lambda'^3}.$$

\bigskip
Now, for the invariance of the Eisenstein series and the functions $ F_{p,a}^l $  the inner product is
\begin{equation}\label{Tm2.1}
 \frac{1}{2 \pi^2}   \sum_{\substack{ a=-l \\ a \in  \rfrac{1}{2} \mathbb{Z} \\ a \equiv l \text{ mod } 1 }}^l  \frac{\sqrt{(l+q)!(l-q)!}}{\sqrt{(l+a)!(l-a)!}}
 \int_{ \sigma (   \Gamma \backslash \mathbb{H}^3 ) }  E \big(  z'+\lambda' j, it \big) \,     \sum_{\sigma\,\in\,_{\scriptstyle\Gamma'_\infty}\backslash\Gamma}   \overline{ D_{-a,-q}^l \big(  \Phi (A) \big) } \; \lambda'^{1-it}  \cdot  F_{p,a}^l \big(   n[z']a[\lambda'] \, A^{-1} \big) 
 \; \frac{dx' dy' d\lambda'}{\lambda'^3}.
\end{equation}

\bigskip
For (\ref{FC1})
\begin{equation} \label{Tm3}
 F_{p,a}^l \big(   n[z']a[\lambda'] \, A^{-1} \big)  \, = \,
  \sum_{\substack{ u=-l \\ u \in  \rfrac{1}{2} \mathbb{Z} \\ u \equiv l \text{ mod } 1 }}^l  \Phi_{ua}^l \big( A^{-1} \big) \cdot F_{p,u}^l \big(  z'+\lambda' j, I \big).
\end{equation}

\bigskip
Replacing (\ref{Tm3}) in (\ref{Tm2.1}), the inner product is given by
$$ \; \; \; \frac{1}{2 \pi^2}   \sum_{\substack{ a=-l \\ a \in  \rfrac{1}{2} \mathbb{Z} \\ a \equiv l \text{ mod } 1 }}^l  \frac{\sqrt{(l+q)!(l-q)!}}{\sqrt{(l+a)!(l-a)!}} \sum_{\substack{ u=-l \\ u \in  \rfrac{1}{2} \mathbb{Z} \\ u \equiv l \text{ mod } 1 }}^l  \;  \sum_{\sigma\,\in\,_{\scriptstyle\Gamma'_\infty}\backslash\Gamma}   \hspace{9cm} $$
$$  \int_{ \sigma (   \Gamma \backslash \mathbb{H}^3 ) }  E \big(  z'+\lambda' j, it \big) \cdot   \overline{ D_{-a,-q}^l \big(  \Phi (A) \big) } \cdot \lambda'^{1- it}  \cdot  \Phi_{ua}^l \big( A^{-1} \big) \cdot F_{p,u}^l \big(  z'+\lambda' j, I \big)
 \; \frac{dx' dy' d\lambda'}{\lambda'^3} $$
 $$ = \,  \frac{1}{2 \pi^2}  \sum_{\substack{ u=-l \\ u \in  \rfrac{1}{2} \mathbb{Z} \\ u \equiv l \text{ mod } 1 }}^l \, \sum_{\sigma\,\in\,_{\scriptstyle\Gamma'_\infty}\backslash\Gamma}  \hspace{13cm} $$
\begin{equation}\label{FC8}
 \int_{ \sigma (   \Gamma \backslash \mathbb{H}^3 ) }  E \big(  z+\lambda j, it \big) \cdot F_{p,u}^l \big(  z+\lambda j, I \big) \cdot \lambda^{1-it}  \Bigg[
   \sum_{\substack{ a=-l \\ a \in  \rfrac{1}{2} \mathbb{Z} \\ a \equiv l \text{ mod } 1 }}^l  \frac{\sqrt{(l+q)!(l-q)!}}{\sqrt{(l+a)!(l-a)!}} \cdot  \overline{ D_{-a,-q}^l \big(  \Phi (A) \big) }  \cdot  \Phi_{ua}^l \big( A^{-1} \big)  \Bigg] \frac{dx dy d\lambda}{\lambda^3}.
\end{equation}

\bigskip
Now we solve the summation between brackets, using (\ref{A11}), (\ref{A8}) and (\ref{WI10}) we obtain 
\begin{equation}\label{FC9} 
 \sum_{\substack{ a=-l \\ a \in  \rfrac{1}{2} \mathbb{Z} \\ a \equiv l \text{ mod } 1 }}^l   \frac{\sqrt{(l+q)!(l-q)!}}{\sqrt{(l+a)!(l-a)!}} \cdot   \overline{ D_{-a,-q}^l \big(  \Phi (A) \big) }  \cdot  \Phi_{ua}^l \big( A^{-1} \big) \, = \,  \delta_{uq}.
\end{equation}

\bigskip
Then, replacing (\ref{FC9}) in (\ref{FC8}) we have 
\begin{align}
 \Big(  F_{p,q}^l  ,  d\epsilon_{it}  \Big) 
  & \, = \,  \frac{1}{2 \pi^2}   \sum_{\sigma\,\in\,_{\scriptstyle\Gamma'_\infty}\backslash\Gamma} \; \int_{ \sigma (   \Gamma \backslash \mathbb{H}^3 ) }  E \big(  z+\lambda j, it \big)  \cdot  F_{p,q}^l \big(  z+\lambda j, I \big)  \cdot  \lambda^{1-it}   \; \frac{dx dy d\lambda}{\lambda^3} &&    \nonumber \\  
        & \, = \,   \frac{1}{2 \pi^2}   \int_{ \Gamma'_\infty \backslash   \mathbb{H}^3   }  E \big(  z+\lambda j,it \big)  \cdot  F_{p,q}^l \big(  z+\lambda j, I \big)  \cdot  \lambda^{1-it}   \; \frac{dx dy d\lambda}{\lambda^3}. &&    \label{FC10}
 \end{align}

\bigskip
For (\ref{FC02}) and (\ref{FC0})
\begin{equation}\label{Aux8}
 F_{p,q}^l \big(  z+\lambda j, I \big)  \, = \,
    \sum_{0 \neq w \in \Lambda} 2 \,  (-1)^{l-p} \, \pi^{ir} \, c(w) \, \abs{ w }^{ir-1} \,  \bigg( \frac{i w}{\abs{w}}  \bigg)^{-p-q} \cdot  \mathcal{W}_q^l (ir,p; \tfrac{\abs{w} \lambda}{2} ) \cdot e^{2 \pi i \, \Re(wz) } .
\end{equation}

\bigskip
Replacing (\ref{Aux8}) in (\ref{FC10})
$$  \Big(  F_{p,q}^l  ,  d\epsilon_{it}  \Big) \, = \, \frac{1}{ \pi^2} \sum_{0 \neq w \in \Lambda} (-1)^{l-p} \, \pi^{ir} \, c(w) \, \abs{ w }^{ir-1} \,  \bigg( \frac{i w}{\abs{w}}  \bigg)^{-p-q} \hspace{5cm}  $$
\begin{equation}\label{FC11}
 \hspace{3cm} \cdot \int_0^\infty     \mathcal{W}_q^l (ir,p; \tfrac{\abs{w} \lambda}{2} ) \cdot  \lambda^{1-it}  \; \Bigg[ \int_{ [0,1]^2 }  E \big(  z+\lambda j,it \big)  \cdot   \, e^{2 \pi i \, \Re(wz) }   \; dx dy    \Bigg] \;  \frac{ d\lambda}{\lambda^3}.
\end{equation}

\bigskip
\bigskip
The Fourier expansion of the clasical Eisenstein series evaluated in $s=it$ (with  $t \neq 0$) is given by:
$$    E (z+\lambda j,it) \, = \, [\Gamma_{\infty} : \Gamma'_{\infty} ]  \; \lambda^{1+it} \;  - \;   \frac{ i \pi  }{t} \;
\frac{ \zeta_K ( it )}{ \zeta_K ( 1+it )}  \cdot \lambda^{1-it}  \hspace{4cm} $$
$$ \hspace{2cm} + \;     \frac{(2\pi)^{1+it} \,  \lambda}{ \Gamma(1+it)}  \,  \sum_{ 0 \neq \alpha \in \Lambda'} 
\mathcal{D}_{0} (-\alpha; it) \cdot \abs{ \alpha }^{it}  \cdot  K_{it} \big( 4 \pi \abs{\alpha} \lambda \big) \cdot e^{-4 \pi i \, \Re(\alpha z )}. $$

\bigskip
Then
\begin{equation}\label{FC12}
 \int_{[0,1]^2 }  E \big(  z+\lambda j,it \big)  \cdot   \, e^{2 \pi i \, \Re(wz) }   \; dx dy      \, = \,   \frac{2  \pi^{1+it} \,  \lambda}{\Gamma(1+it)}  
\, \mathcal{D}_{0} (-\tfrac{w}{2}; it) \cdot \abs{ w }^{it} 
 \cdot  K_{it} \big( 2 \pi \abs{w} \lambda \big).
\end{equation}

\bigskip
Replacing (\ref{FC12}) in (\ref{FC11}) is followed that
$$  \Big(  F_{p,q}^l  ,  d\epsilon_{it}  \Big) \, = \,   \frac{2  \pi^{-1+it+ir} }{\Gamma(1+it)}  \, (-1)^{l-p} \, i^{-p-q} \, \Bigg[ \sum_{0 \neq w \in \Lambda}   \, c(w) \, \abs{ w }^{-1+ir+it} \, \mathcal{D}_{0} (-\tfrac{w}{2}; it)  \;  \bigg( \frac{w}{\abs{w}}  \bigg)^{-p-q}
\Bigg]  \hspace{4cm}  $$
\begin{equation}\label{FC13}
\cdot  \int_0^\infty     \mathcal{W}_q^l (ir,p; \tfrac{\abs{w} \lambda}{2} ) \cdot  K_{it} \big( 2 \pi \abs{w} \lambda \big)  \cdot \lambda^{-1-it}  \;  d\lambda.
\end{equation}

\bigskip
Nevertheless for (\ref{FC01})
\begin{equation}\label{FC14}
\mathcal{W}_q^l (ir,p; \tfrac{\abs{w} \lambda}{2} ) \, = \,  \sum_{ v=0 }^{ l- \frac{1}{2}( \abs{q+p}+\abs{q-p})} (-1)^v \; \xi_p^l (q,v) \, \frac{\big(\pi  \abs{w} \lambda \big)^{1+l-v}}{\Gamma(1+l-v+ir)} 
\cdot K_{l-\abs{q+p}-v+ir} \, ( 2 \pi \abs{w} \lambda ). 
\end{equation}

\bigskip
Replacing the equality (\ref{FC14}) in (\ref{FC13}) we obtain that 
$$  \Big(  F_{p,q}^l  ,  d\epsilon_{it}  \Big) \, = \,   \frac{2 \pi^{l+it+ir} }{\Gamma(1+it)}  \, (-1)^{l-p} \, i^{-p-q} \,   \sum_{ v=0 }^{ l- \frac{1}{2}( \abs{q+p}+\abs{q-p})}  \;  \frac{  (-\pi)^{-v} \, \xi_p^l (q,v)}{\Gamma(1+l-v+ir)}  $$
$$ \Bigg[ \sum_{0 \neq w \in \Lambda}   \, c(w) \,  \abs{ w }^{l-v+ir+it} \, \mathcal{D}_{0} (-\tfrac{w}{2}; it)  \;  \bigg( \frac{w}{\abs{w}}  \bigg)^{-p-q}
\Bigg]  \;   \int_0^\infty \lambda^{l-v-it} \cdot  K_{l-\abs{q+p}-v+ir} \, ( 2 \pi \abs{w} \lambda ) \cdot  K_{it} \big( 2 \pi \abs{w} \lambda \big)  \;  d\lambda.
$$

\bigskip
Changing the variable $x = 2 \pi \abs{w} \lambda$, 
$$  \Big(  F_{p,q}^l  ,  d\epsilon_{it}  \Big) \, = \, 
  \frac{2^{-l+it} \, \pi^{-1+ir+2it} }{\Gamma(1+it)} 
   \, (-1)^{l-p} \, i^{-p-q} \,   \sum_{ v=0 }^{ l- \frac{1}{2}( \abs{q+p}+\abs{q-p})}  \;  \frac{  (-2)^{v} \, \xi_p^l (q,v)}{\Gamma(1+l-v+ir)}  $$
   \begin{equation}\label{FC15}
\Bigg[ \sum_{0 \neq w \in \Lambda}   \, c(w) \,  \abs{ w }^{-1+ir+2it} \, \mathcal{D}_{0} (-\tfrac{w}{2}; it)  \;  \bigg( \frac{w}{\abs{w}}  \bigg)^{-p-q}
\Bigg]  \;   \int_0^\infty x^{l-v-it} \cdot  K_{l-\abs{q+p}-v+ir} \, ( x ) \cdot  K_{it} ( x )  \;  dx.
\end{equation}

\bigskip
\bigskip
The integral at the end of the last expression can be found in  \cite{GR} page 692.
$$ \int_0^\infty x^{\,-z} \cdot K_{\mu} ( x )  \cdot K_{\nu}   ( x ) \;  dx   \, = \,  \frac{2^{-2-z}}{\Gamma(1-z)} \Gamma \big( \tfrac{1-z+\mu+\nu}{2}  \big) \Gamma \big( \tfrac{1-z-\mu+\nu}{2}  \big) \Gamma \big( \tfrac{1-z+\mu-\nu}{2}  \big) \Gamma \big( \tfrac{1-z-\mu-\nu}{2}  \big) \cdot F \big( \tfrac{1-z+\mu+\nu}{2} , \tfrac{1-z-\mu+\nu}{2} ;1-z ; 0 \big), $$
for $\Re (z) <  1- \abs{\Re(\mu)} - \abs{\Re(\nu)}$. Particularly,  

$$\int_0^\infty   x^{l-v-it} \cdot K_{l-\abs{q+p}-v+ir} \, ( x ) \cdot  K_{it} \big( x \big) \,  dx \,  = \,  \frac{2^{l-2-v-it}}{\Gamma(l+1-v-it)}$$
\begin{equation}\label{FC16}
 \cdot \; 
   \Gamma \big( \tfrac{1}{2}+l-v-\tfrac{1}{2} \abs{q+p} +\tfrac{ir}{2} \big) \cdot
    \Gamma \big( \tfrac{1}{2}+\tfrac{1}{2} \abs{q+p} -\tfrac{ir}{2} \big) \cdot 
  \Gamma \big( \tfrac{1}{2}+l-v-\tfrac{1}{2} \abs{q+p} -it + \tfrac{ir}{2}  \big) \cdot 
    \Gamma   \big(  \tfrac{1}{2}+\tfrac{1}{2} \abs{q+p} -it - \tfrac{ir}{2} \big).
\end{equation}

\bigskip
On the other hand, for (\ref{IE20}) 
\begin{equation}\label{FC16.4}
 S \, := \,   \sum_{0 \neq w \in \Lambda}  c(w) \, \abs{ w }^{-1+ir+2it} \, \mathcal{D}_{0} (-\tfrac{w}{2}; it) \, \bigg( \frac{ w}{\abs{w}}  \bigg)^{-p-q}   \, = \,   \frac{4^2}{\zeta_K (1+it)} \sum_{0 \neq w \in \Lambda}  c(w) \, \abs{ w }^{-1+ir+2it} \, \bigg( \frac{ w}{\abs{w}}  \bigg)^{-p-q} \, \sigma_{-it} (w,0).
\end{equation}

\bigskip
However, for the lemma (\ref{LFC}) 
\begin{equation}\label{FC16.5}
 \sum_{0 \neq  w \in Z[i]}  c(w) \cdot \abs{w}^s \cdot  \bigg( \frac{ w }{\abs{w}}  \bigg)^{\alpha} \cdot \sigma_{\nu} (w,0) \, = \,  \frac{1}{4}
 \, \frac{L(-\tfrac{s}{2}, \phi, \chi_\alpha ) \, L(-\tfrac{s}{2}-\nu, \phi, \chi_\alpha ) }{ L( -s-\nu, \chi_{2\alpha}) } .
\end{equation}

\bigskip
Then, replacing (\ref{FC16.5}) in (\ref{FC16.4}) it follows that
\begin{equation}\label{FC17}
S \, = \,  \frac{ 4 }{\zeta_K (1+it)} \cdot  \frac{L(\tfrac{1}{2}-\tfrac{ir}{2}-it, F_{p,q}^l, \chi_{-p-q} ) \cdot L(\tfrac{1}{2}-\tfrac{ir}{2}, F_{p,q}^l, \chi_{-p-q} ) }{ L( 1-ir-it, \chi_{-2p-2q}) }.
\end{equation}

\bigskip
Finally, replacing (\ref{FC16}) and (\ref{FC17}) in (\ref{FC15}) we obtain
$$  \Big(  F_{p,q}^l  ,  d\epsilon_{it}  \Big) \, = \, 
  (-1)^{l-p} \, i^{-p-q} \,  \frac{ \pi^{-1+ir+2it} }{\Gamma(1+it)}  $$
  $$   \frac{L(\tfrac{1}{2}-\tfrac{ir}{2}-it, F_{p,q}^l, \chi_{-p-q} ) \cdot L(\tfrac{1}{2}-\tfrac{ir}{2}, F_{p,q}^l, \chi_{-p-q} ) \cdot  \Gamma \big( \tfrac{1}{2}+\tfrac{1}{2} \abs{q+p} -\tfrac{ir}{2} \big)  \cdot  \Gamma   \big(  \tfrac{1}{2}+\tfrac{1}{2} \abs{q+p} -\tfrac{ir}{2}-it \big) }{ \zeta_K (1+it) \cdot L( 1-ir-it, \chi_{-2p-2q}) }   $$
  \begin{equation}\label{FC18}
  \sum_{ v=0 }^{ l- \frac{1}{2}( \abs{q+p}+\abs{q-p})} (-1)^{v} \; \xi_p^l (q,v) 
   \cdot   \frac{ 
   \Gamma \big( \tfrac{1}{2}+l-v-\tfrac{1}{2} \abs{q+p} +\tfrac{ir}{2} \big) \cdot
       \Gamma \big(  \tfrac{1}{2}+l-v-\tfrac{1}{2} \abs{q+p} +\tfrac{ir}{2}-it   \big)    }{    \Gamma(1+l-v+ir) \cdot \Gamma(1+l-v-it)}. 
  \end{equation}



\bigskip
For the Stirling's formula $ \abs{\Gamma (\sigma \pm it)} \, \sim \, \sqrt{2 \pi} \;
e^{-\tfrac{\pi}{2} \abs{t}} \, \abs{t}^{\, \sigma-\tfrac{1}{2}}$ as $ t \longrightarrow \infty$. Then, the absolute value of gamma factors in (\ref{FC18}) where appears $t$ satisfy that
\begin{equation}\label{FC19}
  \ll \, t^{-1} \; \; \text{ as } t \longrightarrow \infty.
\end{equation}

\bigskip
It is known that the Dedekind zeta function in (\ref{FC18}) is estimated as follows:
\begin{equation}\label{FC20}
  t^{-\epsilon} \ll \abs{\zeta_K (1+it)} \ll t^\epsilon, \; \; \forall \epsilon > 0.
\end{equation}

\bigskip
The results of Michel and Venkatesh \cite{MiVen} and Garrett and Diaconu \cite{GaDia} proved that  exists $\delta >0$ such that for every $\epsilon > 0$ 
\begin{equation}\label{FC21}
  L(\tfrac{1}{2}-it, F_{p,q}^l, \chi_\alpha ) \, {\ll}_{F_{p,q}^l,\epsilon,\chi_\alpha} \; \abs{t}^{1-\delta+\epsilon}  \; \; \text{ as } t \longrightarrow \infty.
\end{equation}

The estimates (\ref{FC19}), (\ref{FC20}) and (\ref{FC21}) in the formula (\ref{FC18}) show the next:
\begin{prop}\label{PFC}
We have that 
\[
\lim_{t \rightarrow \infty}  \Big(  F_{p,q}^l  ,  d\epsilon_{it}  \Big) \, = \, 0.
\]
\end{prop}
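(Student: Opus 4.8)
The plan is to read the asymptotics directly off the closed formula (\ref{FC18}), since every piece of genuine analysis has already been isolated in the estimates (\ref{FC19})--(\ref{FC21}). The strategy is simply to bound each factor on the right-hand side of (\ref{FC18}) according to its dependence on $t$ and then multiply the bounds together. First I would dispose of the harmless factors: the prefactor $\pi^{-1+ir+2it}$ has absolute value $\pi^{-1}$, the constant $(-1)^{l-p} i^{-p-q}$ has modulus one, and the $t$-free factors — namely $L(\tfrac12-\tfrac{ir}{2}, F_{p,q}^l, \chi_{-p-q})$, the gamma factor $\Gamma(\tfrac12+\tfrac12\abs{q+p}-\tfrac{ir}{2})$, and each summand's $t$-free gamma ratio $\Gamma(\tfrac12+l-v-\tfrac12\abs{q+p}+\tfrac{ir}{2})/\Gamma(1+l-v+ir)$ — are all bounded by a constant depending only on $l,p,q,r$. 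The sum over $v$ ranges over a fixed finite set, so it introduces no growth in $t$.

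Next I would invoke the three stated estimates to control the $t$-dependent factors. For the gamma factors, the point already recorded in (\ref{FC19}) is that the product, for each fixed $v$, of $1/\Gamma(1+it)$ with $\Gamma(\tfrac12+\tfrac12\abs{q+p}-\tfrac{ir}{2}-it)$ and the ratio $\Gamma(\tfrac12+l-v-\tfrac12\abs{q+p}+\tfrac{ir}{2}-it)/\Gamma(1+l-v-it)$ is $\ll t^{-1}$: by Stirling the exponential factors $e^{\pm\pi t/2}$ cancel and the surviving power is $-\tfrac12+\tfrac12\abs{q+p}-\tfrac12-\tfrac12\abs{q+p}=-1$, uniformly in $v$ since the cutoff parameter cancels in the ratio. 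For the arithmetic factors I would use (\ref{FC20}) in the form $1/\abs{\zeta_K(1+it)}\ll t^\epsilon$, the subconvexity bound (\ref{FC21}) in the form $\abs{L(\tfrac12-\tfrac{ir}{2}-it, F_{p,q}^l, \chi_{-p-q})}\ll t^{1-\delta+\epsilon}$, and a standard lower bound for the Hecke $L$-function on the line $\Re(s)=1$ (of exactly the shape of (\ref{FC20})) giving $1/\abs{L(1-ir-it,\chi_{-2p-2q})}\ll t^\epsilon$.

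Combining these bounds yields
\[
\abs{\big(F_{p,q}^l, d\epsilon_{it}\big)} \, \ll \, t^{-1}\cdot t^{\epsilon}\cdot t^{1-\delta+\epsilon}\cdot t^{\epsilon} \, = \, t^{-\delta+3\epsilon},
\]
and choosing $\epsilon<\delta/3$ forces the right-hand side to $0$ as $t\to\infty$, which is the assertion. The only deep ingredient is the subconvexity estimate (\ref{FC21}), quoted from Michel--Venkatesh and Garrett--Diaconu, which I take as given; everything else is bookkeeping of powers of $t$ already packaged into (\ref{FC19})--(\ref{FC21}). The one step demanding a little care in my own accounting is the denominator value $L(1-ir-it,\chi_{-2p-2q})$: I must cite (or reprove) the non-vanishing and polynomial lower bound on $\Re(s)=1$, so that its reciprocal contributes only $t^\epsilon$ and does not erode the decisive saving $t^{-\delta}$ coming from subconvexity.
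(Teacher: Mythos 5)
Your proposal is correct and is essentially identical to the paper's own proof: the paper likewise reads Proposition \ref{PFC} directly off formula (\ref{FC18}) by combining the Stirling cancellation (\ref{FC19}), the zeta bound (\ref{FC20}), and the subconvexity estimate (\ref{FC21}), exactly as you do. Your explicit accounting of the denominator $L(1-ir-it,\chi_{-2p-2q})$ is in fact slightly more careful than the paper, which omits this point in Section 3 (the needed lower bound of the shape (\ref{Bo4}) only appears later, in Section 4).
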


\bigskip
\bigskip
\section{Incomplete Eisenstein Series}
Let $\psi(\lambda) \in C_0^\infty ((0,\infty))$ be a rapidly decreasing function at $0$ and $\infty$. Consider the Mellin transform of $\psi$
\begin{equation}\label{H}
H(s) \, = \, \int_0^\infty \psi(\lambda) \, \lambda^{-s} \, \frac{d \lambda}{\lambda}.
\end{equation}
$H(s)$ is of Schwartz class in $t$ for each vertical line $\sigma +it$, we denote such line by $(\sigma)$.

\bigskip
The Mellin inversion formula affirms that
\begin{equation}\label{IM}
\psi(\lambda)  \, = \, \frac{1}{2\pi i} \int_{(\sigma)} H(s) \, \lambda^s \, ds
\end{equation}
for any $\sigma \in \R$.

\begin{defi}\label{DIES}
Let  $l,a,b \in \tfrac{1}{2} \mathbb{Z}$, $l \geq 0$, such that  $l \equiv a \equiv b \text{ mod } 1$ with   $a,b \in [-l,l]$, the incomplete Eisenstein series associated to $\psi$, denoted $ F_{a,b}^l (\psi)$, are given by
$$ F_{a,b}^l (\psi) (g)   \, := \,  \sum_{\sigma \,\in \,_{\scriptstyle\Gamma'_\infty}\backslash\Gamma}   \overline{ D_{ab}^l \big(  \Phi_{ T( \sigma g )^{-1}} \big)} \; \psi \big( \Im \, \sigma g ( j ) \big) \, = \, \frac{1}{2 \pi i} \, 
 \int_{(3)}  H(s)  \cdot E_{ab}^l (g,s-1) \; ds.$$
\end{defi}

\begin{rem}
It will be used too the notation $ F_{a,b}^l (\psi) (g) \, = \,F_{a,b}^l (\psi) (z + \lambda j, K).  $
\end{rem}

For  (\ref{PropE})
\begin{equation}\label{IES0}
 F_{a,b}^l (\psi) (z + \lambda j, K) \, = \, \frac{1}{2 \pi i} \, 
\sum_{\substack{ r=-l \\ r \in \tfrac{1}{2} \Z \\ r \equiv l \text{ mod } 1 }}^{l}   \overline{D^l_{ar} \big( \Phi (K)^{-1}  \big)}  \int_{(3)}  H(s)  \cdot E_{rb}^l (z+ \lambda j ,s-1) \; ds.
\end{equation}

\bigskip
We have to $\Big(  F_{a,b}^l (\psi)  ,  d\epsilon_{it}  \Big)  $
\begin{align}
   & \, = \, 
\int_{\Gamma \backslash G }   F_{a,b}^l (\psi) \big(   z+\lambda j, K \big)  \cdot  E \big( z+ \lambda j,  it \big)
 \;  \sum_{\substack{ L \in  \mathbb{Z} \\ L \geq 0 }}  \;  \sum_{\substack{ k,m=-\rfrac{L}{2} \\ k,m \in \tfrac{1}{2} \Z \\ k \equiv m \equiv \rfrac{L}{2} \text{ mod } 1 }}^{\rfrac{L}{2}}   (-1)^{m-k} \cdot \mathcal{T}^{\, L}_{kk} (I) \cdot  {E}_{-m,-k}^{\rfrac{L}{2}} \big(z +\lambda j, -it\big)    \; \cdot  \; \overline{\mathcal{T}_{km}^L(K)} \, d g 
 &&  \nonumber   \\
   & \, = \, 
\int_{\Gamma \backslash G }  \bigg(  \frac{1}{2 \pi i} \, 
\sum_{\substack{ r=-l \\ r \in \tfrac{1}{2} \Z \\ r \equiv l \text{ mod } 1 }}^{l}    \overline{D^l_{ar} \big( \Phi (K)^{-1} \big)}  \int_{(3)}  H(s)  \cdot E_{rb}^l (z+ \lambda j ,s-1) \; ds \bigg) \;  E \big( z+ \lambda j, it \big)
 && \nonumber \\
   & \hspace{3cm} \cdot \; \sum_{\substack{ L \in  \mathbb{Z} \\ L \geq 0 }}   \;  \sum_{\substack{ k,m=-\rfrac{L}{2} \\ k,m \in \tfrac{1}{2} \Z \\ k \equiv m \equiv \rfrac{L}{2} \text{ mod } 1 }}^{\rfrac{L}{2}}   (-1)^{m-k} \cdot \mathcal{T}^{\, L}_{kk} (I) \cdot  {E}_{-m,-k}^{\rfrac{L}{2}} \big(z +\lambda j, -it\big)    \; \cdot  \; \overline{\mathcal{T}_{km}^L(K)} \, d g 
    \hspace{3cm} \textrm{por (\ref{IES0})}&&   \nonumber \\
   & \, = \,  \sum_{\substack{ r=-l \\ r \in \tfrac{1}{2} \Z \\ r \equiv l \text{ mod } 1 }}^{l}   \int_{(3)}  \frac{1}{2 \pi i} \, H(s) 
\int_{ \Gamma \backslash \mathbb{H}^3 }  E_{rb}^l (z+\lambda j,s-1) \cdot E \big( z+ \lambda j,  it \big) 
   \sum_{\substack{ L \in  \mathbb{Z} \\ L \geq 0 }}  \;   \sum_{\substack{ k,m=-\rfrac{L}{2}  \\ k,m \in \tfrac{1}{2} \Z \\  k \equiv m \equiv \rfrac{L}{2} \text{ mod } 1 }}^{\rfrac{L}{2}}   (-1)^{m-k} \cdot \mathcal{T}^{\, L}_{kk} (I) \cdot  {E}_{-m,-k}^{\rfrac{L}{2}} \big(z +\lambda j, -it\big) && \nonumber \\
   &  \hspace{4.5cm}  \bigg[ \int_{\SU(2) }  
 \overline{D^l_{ar} \big( \Phi (K)^{-1} \big)}  \cdot \overline{\mathcal{T}_{km}^L(K)} \; d k \bigg] 
d V  \, ds. && \label{InES1}
 \end{align} 

\bigskip
Working now with the integral in $\SU(2)$ of the equation (\ref{InES1}). Using the identities (\ref{A8}), (\ref{A11}), (\ref{T1}) y (\ref{A10}) it is seen that 
\begin{equation}\label{InE2}
\int_{\SU(2) }  
 \overline{D^l_{ar} \big( \Phi (K)^{-1} \big)}  \cdot \overline{\mathcal{T}_{km}^L(K)} \; d k \, = \,  \frac{1}{\sqrt{2} \pi \,\sqrt{2l+1}} \;
   \delta_{l,\rfrac{L}{2}} \, \delta_{r,m}  \, \delta_{a,k}. 
\end{equation}

 \bigskip 
 As a consequence, replacing (\ref{InE2}) in (\ref{InES1}) the inner product is:
 $$    \frac{1}{\sqrt{2} \pi \,\sqrt{2l+1}}  \int_{(3)}  \frac{1}{2 \pi i} \, H(s) 
\int_{ \Gamma \backslash \mathbb{H}^3 }  E \big( z+ \lambda j, it \big)  \sum_{\substack{ m=-l \\ m \in \tfrac{1}{2} \Z \\ m \equiv l \text{ mod } 1 }}^{l}    (-1)^{m-a} \cdot \mathcal{T}^{\, 2l}_{aa} (I) \cdot E_{mb}^l (z+\lambda j,s-1) \cdot
    {E}_{-m,-a}^{l} \big(z +\lambda j, -it\big)    
d V  \, ds$$ 
and developing  $E_{mb}^l (z+\lambda j,s-1)$,
 $$= \,  \frac{1}{2 \pi^2} \int_{(3)}  \frac{1}{2 \pi i} \, H(s) 
\int_{ \Gamma \backslash \mathbb{H}^3 }  E \big( z+ \lambda j,  it \big)  \sum_{\substack{ m=-l \\ m \in \tfrac{1}{2} \Z \\ m \equiv l \text{ mod } 1 }}^{l}   (-1)^{m-a}   \sum_{\sigma\,\in\,_{\scriptstyle\Gamma'_\infty}\backslash\Gamma}   \overline{ D_{mb}^l \big(  \Phi_{ T( \sigma n[z]a[\lambda]  )^{-1}} \big)} \hspace{4.3cm} $$
\begin{equation}\label{IES1}
\cdot \,  \Im \,  \sigma ( z+\lambda j)^{s} \cdot   {E}_{-m,-a}^{l} \big(z +\lambda j, -it\big)  \;  
d V  \, ds.
\end{equation}

\bigskip
As earlier, we make the variable change
$$ \sigma ( z + \lambda j ) \, = \, z'+\lambda' j \; \; \iff \; \;  z + \lambda j  \, = \, \sigma^{-1} ( z'+\lambda' j).$$
So that 
$$ n[z]a[\lambda] \, = \, \sigma^{-1} \, n[z']a[\lambda'] \, A^{-1}, $$
with $A:= T \big(  \sigma^{-1} \, n[z']a[\lambda'] \big)$. Using the invariance for the Eisenstein series we have that the integral in (\ref{IES1})  is
\begin{equation}\label{IES2}
 \int_{(3)}  \frac{1}{2 \pi i} \, H(s)   \sum_{\sigma\,\in\,_{\scriptstyle\Gamma'_\infty}\backslash\Gamma} 
\int_{\sigma( \Gamma \backslash \mathbb{H}^3) }  E \big(  z'+\lambda' j,  it \big)   \sum_{\substack{ m=-l \\ m \in \tfrac{1}{2} \Z \\ m \equiv l \text{ mod } 1 }}^{l}   (-1)^{m-a} \cdot    \overline{ D_{mb}^l \big(  \Phi ( A ) \big)} \; \lambda'^{s} \cdot     {E}_{-m,-a}^{l} \big( n[z']a[\lambda'] \, A^{-1}, -it\big)    
d V  \, ds.
\end{equation}

For (\ref{PropE})
\begin{equation}\label{IES3}
 {E}_{-m,-a}^l \big( n[z']a[\lambda'] \, A^{-1}, -it \big)   \, = \,    \sum_{\substack{ u=-l \\ u \in \tfrac{1}{2} \Z \\ u \equiv l \text{ mod } 1 }}^{l}   \overline{D^l_{-m,u} \big( \Phi (A) \big)}  \cdot E_{u,-a}^l (z' +\lambda' j, -it).
\end{equation}

\bigskip
Replacing (\ref{IES3}) in  (\ref{IES2})
 $$ \textbf{I}(t) \, := \,  \frac{1}{2 \pi^2}  \; \int_{(3)}  \frac{1}{2 \pi i} \, H(s)   \sum_{\sigma\,\in\,_{\scriptstyle\Gamma'_\infty}\backslash\Gamma} 
\int_{\sigma( \Gamma \backslash \mathbb{H}^3) }  E \big( z'+\lambda' j, it \big)   \sum_{\substack{ m=-l \\ m \in \tfrac{1}{2} \Z \\ m \equiv l \text{ mod } 1 }}^{l}  (-1)^{m-a} \cdot    \overline{ D_{mb}^l \big(  \Phi ( A) \big)} \cdot \lambda'^{s}$$ $$  \cdot  \; \sum_{\substack{ u=-l \\ u \in \tfrac{1}{2} \Z \\ u \equiv l \text{ mod } 1 }}^{l}     \overline{D^l_{-m,u} \big( \Phi ( A) \big)}  \cdot E_{u,-a}^l (z' +\lambda' j, -it)    
d V  \, ds$$
 $$= \; \frac{1}{2 \pi^2}  \; \int_{(3)}  \frac{1}{2 \pi i} \, H(s)   \sum_{\sigma\,\in\,_{\scriptstyle\Gamma'_\infty}\backslash\Gamma} 
\int_{\sigma( \Gamma \backslash \mathbb{H}^3) } \lambda'^{s} \cdot E \big(  z'+\lambda' j, it \big)  \sum_{\substack{ u=-l \\ u \in \tfrac{1}{2} \Z \\ u \equiv l \text{ mod } 1 }}^{l}     E_{u,-a}^l (z' +\lambda' j, -it) $$
\begin{equation}\label{IES4}
\hspace{1cm} \bigg[  \sum_{\substack{ m=-l \\ m \in \tfrac{1}{2} \Z \\ m \equiv l \text{ mod } 1 }}^{l}  (-1)^{m-a} \cdot    \overline{ D_{mb}^l \big(  \Phi (A) \big)} \;  \cdot     \overline{D^l_{-m,u} \big( \Phi (A) \big)} \,  \bigg]  \,  d V  \, ds.
\end{equation}

\bigskip
Now we take care of the sum in (\ref{IES4}) between brackets, for (\ref{A8}) and (\ref{WI10})
\begin{equation}\label{IES5} 
\sum_{\substack{ m=-l \\ m \in \tfrac{1}{2} \Z \\ m \equiv l \text{ mod } 1 }}^{l}  (-1)^{m-a} \cdot \overline{ D_{mb}^l \big(  \Phi ( A) \big)}  \cdot     \overline{D^l_{-m,u} \big( \Phi ( A ) \big)}   \, = \, (-1)^{a+b}   \cdot  \delta_{u,-b}.
\end{equation}

\bigskip
Then, replacing (\ref{IES5}) in (\ref{IES4}) 
 $$\textbf{I}(t) \, = \,   (-1)^{a+b}  \, \frac{1}{2 \pi^2}   \; \int_{(3)}  \frac{1}{2 \pi i} \, H(s)   \sum_{\sigma\,\in\,_{\scriptstyle\Gamma'_\infty}\backslash\Gamma} 
\int_{\sigma( \Gamma \backslash \mathbb{H}^3) } \lambda^{s} \cdot E \big(  z+\lambda j, it \big) \cdot   E_{-b,-a}^l (z +\lambda j, -it) \,     
d V  \, ds$$
\begin{equation}\label{IES6}
= \, (-1)^{a+b} \, \frac{1}{2 \pi^2}   \; \int_{(3)}  \frac{1}{2 \pi i} \, H(s)   \int_0^\infty   \lambda^{s} \bigg[
\int_{[0,1]^2} E \big(  z+\lambda j, it \big) \cdot   E_{-b,-a}^l (z +\lambda j,-it)      
\; dx dy \bigg] \frac{d\lambda}{\lambda^3} \, ds.
\end{equation}

\bigskip
The Fourier expansion of the classic Eisenstein series associated to $\Gamma$ and evaluated in $s=it$ (with $t \neq 0$) is given by:
$$    E (z+\lambda j,it) \, = \, [\Gamma_{\infty} : \Gamma'_{\infty} ]  \; \lambda^{1+it} \;  - \;   \frac{i}{t} \;
\frac{ \zeta_K ( it )}{ \zeta_K ( 1+it )}  \; \lambda^{1-it} \hspace{5cm}$$
\begin{equation}\label{IES7}
\hspace{2cm} + \; \frac{(2\pi)^{1+it} \, \lambda}{  \Gamma(1+it) } \,      \sum_{ 0 \neq \alpha \in \Lambda'} 
\mathcal{D}_{0} (-\alpha; it) \cdot \abs{ \alpha }^{it}  \cdot  K_{it} \big( 4 \pi \abs{\alpha} \lambda \big) \cdot e^{-4 \pi i \, \Re(\alpha z )}.
\end{equation}

\bigskip
The Fourier expansion of the Eisenstein series generalized and evaluated in $s=-it$ (with $t \neq 0$) is:
$$  E_{-b,-a}^l (z+\lambda j,-it) \, = \, [\Gamma_{\infty} : \Gamma'_{\infty} ]   \; \delta_{b,a} \, B_{b,a}^l \, \lambda^{1-it} \; + \;  (-1)^{-a+\abs{a} } \pi \, \frac{\Gamma(1+l+it) \, \Gamma(\abs{a}-it)}{\Gamma(1+l-it) \, \Gamma(1+\abs{a}+it)} \,
\frac{L\big(-it,\chi_{-2a} \big)}{L( 1-it,\chi_{-2a})} \, \delta_{b,-a} \, B_{b,a}^l \, \lambda^{1+it} $$
$$ \, + \,  (-1)^{l-a}  \, i^{b+a} \, (2 \pi)^{-it}  \, B_{b,a}^l   \Large \sum_{ 0 \neq w \in \Lambda'} 
\mathcal{D}_{-a} (-w; -it) \cdot  \abs{ w }^{-it-1} \cdot e^{-4 \pi i \, \Re(w z )}    \Big( \frac{ w }{\abs{w}}  \Big)^{a+b} \hspace{1cm} $$
 \begin{equation}\label{IES8}
 \cdot  \,  \sum_{ u=0 }^{ l- \frac{1}{2}( \abs{a+b}+
\abs{a-b})} (-1)^u \;  \xi_{a}^l (b,u)  \, \frac{\big(2 \pi \abs{w} \lambda \big)^{1+l-u}}{\Gamma(1+l-u-it)} \cdot  K_{l-\abs{a+b}-u-it}   \big( 4 \pi \abs{w} \lambda \big).  
 \end{equation}


\bigskip
From (\ref{IES6}), (\ref{IES7}) and (\ref{IES8}) we have that
$$ \textbf{I}(t) \, = \, (-1)^{a+b} \, \delta_{b,a} \, B_{b,a}^l  \, [\Gamma_{\infty} : \Gamma'_{\infty} ]^2    \,  \frac{1}{2 \pi^2}
\; \int_{(3)}  \frac{1}{2 \pi i} \, H(s)   \int_0^\infty   \lambda^{s-1} 
\; d \lambda  \, ds \; + \;  \Big( 
\text{rapidly decreasing in } t \Big) $$
$$ + \,  (-1)^{l+b}  \, i^{b+a} \, \frac{2 \, (2\pi)^{l} }{  \Gamma(1+it) } \,   B_{b,a}^l  \;  \sum_{ u=0 }^{ l- \frac{1}{2}( \abs{a+b}+\abs{a-b})}  
\;  \xi_{a}^l (b,u) \, \frac{(-1)^u \, (2 \pi)^{-u}  }{\Gamma(1+l-u-it)} \hspace{1cm}$$    
$$  \int_{(3)}  \frac{1}{2 \pi i} \, H(s) 
   \, \Bigg[ \sum_{ 0 \neq w \in \Lambda'}  \mathcal{D}_{-a} (-w; -it) \cdot  
\mathcal{D}_{0} (w; it) \cdot \abs{ w }^{l-u}  \cdot   \bigg( \frac{ w }{\abs{w}}  \bigg)^{a+b} \Bigg] $$ 
$$ \bigg[ \int_0^\infty  \lambda^{-1+s+l-u} \cdot  K_{l-\abs{a+b}-u-it}   \big( 4 \pi \abs{w} \lambda \big)  \cdot  K_{it} \big( 4 \pi \abs{w} \lambda \big) \; d\lambda \bigg] \, ds. $$

\bigskip
And making the variable change $ x =  4 \pi \abs{w} \lambda$ in the last integral,
$$ \textbf{I}(t) \, = \, (-1)^{a+b} \, \delta_{b,a} \, B_{b,a}^l  \, [\Gamma_{\infty} : \Gamma'_{\infty} ]^2   \;   \frac{1}{2 \pi^2}
\; \int_{(3)}  \frac{1}{2 \pi i} \, H(s)   \int_0^\infty   \lambda^{s-1} 
\; d \lambda  \, ds \; + \;  \Big( 
\text{rapidly decreasing in } t \Big) $$

$$ + \,  (-1)^{l+b}  \, i^{b+a} \, \frac{2^{1-l}}{ \Gamma(1+it) } \,   B_{b,a}^l \;  \sum_{ u=0 }^{ l- \frac{1}{2}( \abs{a+b}+\abs{a-b})}  
\;  \xi_{a}^l (b,u) \, 
\frac{(-2)^u }{\Gamma(1+l-u-it)} $$
$$   \int_{(3)} \, (4 \pi)^{-s} \, \frac{1}{2 \pi i} \, H(s) \, \Bigg[ \sum_{ 0 \neq w \in \Lambda'}  \mathcal{D}_{-a} (-w; -it) \cdot  
\mathcal{D}_{0} (w; it) \cdot \abs{ w }^{-s}  \cdot   \bigg( \frac{ w }{\abs{w}}  \bigg)^{a+b} \Bigg]$$
\begin{equation}\label{IES10}
  \bigg[  \int_0^\infty  x^{-1+s+l-u}  \cdot     
   K_{l-\abs{a+b}-u-it}   (x) \cdot  K_{it} (x) \; dx \bigg] \, ds.
\end{equation}


\bigskip
The integral at the end of the expresion (\ref{IES10}) can be found in Gradshteyn and Ryzhik   \cite{GR}  page 692.
$$ \int_0^\infty x^{\,-z} \cdot K_{\mu} ( x )  \cdot K_{\nu}   ( x ) \;  dx   \, = \,  \frac{2^{-2-z}}{\Gamma(1-z)} \Gamma \big( \tfrac{1-z+\mu+\nu}{2}  \big) \Gamma \big( \tfrac{1-z-\mu+\nu}{2}  \big) \Gamma \big( \tfrac{1-z+\mu-\nu}{2}  \big) \Gamma \big( \tfrac{1-z-\mu-\nu}{2}  \big) \cdot F \big( \tfrac{1-z+\mu+\nu}{2} , \tfrac{1-z-\mu+\nu}{2} ;1-z ; 0 \big), $$
for $\Re (z) <  1- \abs{\Re(\mu)} - \abs{\Re(\nu)}$. Particularly, 
$$  \int_0^\infty  x^{s-1+l-u}
\cdot  K_{l-\abs{a+b}-u-it} \, (x) \cdot K_{it} \, (x) \, d x \, $$
\begin{equation}\label{IE13}
 = \;  \frac{2^{s+l-3-u}}{\Gamma ( s+l-u) } \,
  \Gamma \big( \tfrac{s}{2}+l-u-\tfrac{1}{2} \abs{a+b}  \big) \, 
  \Gamma \big( \tfrac{s}{2}+\tfrac{1}{2} \abs{a+b}+it \big) \, 
  \Gamma \big( \tfrac{s}{2}+l-u-\tfrac{1}{2} \abs{a+b} -it \big) \,
    \Gamma \big( \tfrac{s}{2}  +  \tfrac{1}{2} \abs{a+b} \big)
\end{equation}
for $-l-\Re (s) + u <   - \abs{l  - \abs{a+b}-u}$.

\bigskip
As $ \Lambda' \, = \, \tfrac{1}{2} \Lambda$ the summation of the expression (\ref{IES10})  is the following:
\begin{equation} \label{IE18}
  S\, = \, 2^s \, \sum_{ 0 \neq w \in Z[i]} 
\mathcal{D}_{-a} (-\tfrac{w}{2} ; -it) \cdot 
 \mathcal{D}_0 (\tfrac{w}{2} ;  it) \cdot  \abs{w}^{-s} \, 
   \bigg( \frac{ w }{\abs{w}}  \bigg)^{a+b}.
\end{equation}

\bigskip
We will use the following identity Ramanujan type. For every $\alpha_1,\alpha_2,\alpha_3 \in \Z$, $\mu, \nu \in \C$, in the convergence region the following formula is valid:
$$  \sum_{0 \neq  w \in Z[i]}  \bigg( \frac{ w }{\abs{w}}  \bigg)^{4\alpha_1} \frac{1}{  \abs{w}^{2s} }    \cdot \sigma_\mu (w,\alpha_2) \cdot \sigma_\nu (w,\alpha_3)   \hspace{10cm} $$ 
\begin{equation}\label{IES115}
\hspace{3cm} = \; \frac{1}{16} \cdot \frac{ L(s, \chi_{4\alpha_1} ) \cdot L( s -\mu, \chi_{4\alpha_1+4\alpha_2} ) \cdot L( s-\nu, \chi_{4\alpha_1+4\alpha_3} ) \cdot L(s-\mu-\nu, \chi_{4\alpha_1+4\alpha_2+4\alpha_3} ) }{L( 2s-\mu-\nu, \chi_{8\alpha_1+4\alpha_2+4\alpha_3} )}.
\end{equation}



\bigskip
Replacing (\ref{IE20}) in (\ref{IE18}) and using the identity $\sigma_s (-w, p) = \sigma_s (w, p)$ for each $p \in \Z$ we have that 
$$ S \, = \,  2^s \,  \frac{16}{L(1+it,\chi_0)}
\cdot  \frac{16}{L(1-it,\chi_{-2a})}
 \sum_{0 \neq  w \in Z[i]}  \bigg( \frac{  w  }{\abs{w}}  \bigg)^{a+b}    \frac{1}{  \abs{w}^{s} } \cdot \sigma_{-it} (w,0) \cdot \sigma_{it} (w,-\tfrac{a}{2}).$$

\bigskip 
Then, for (\ref{IES115})
 \begin{equation}\label{IE22}
S \, = \,  2^s \, \frac{4}{L(1+it,\chi_0)}
\cdot  \frac{4}{L(1-it,\chi_{-2a})} \cdot
 \frac{ L(\tfrac{s}{2}, \chi_{a+b} ) \cdot  L(\tfrac{s}{2}  +it, \chi_{a+b} ) \cdot  L(\tfrac{s}{2}-it, \chi_{-a+b} ) \cdot L(\tfrac{s}{2}, \chi_{-a+b} ) }{L( s, \chi_{2b} )}. 
 \end{equation}

\bigskip
Finally, replacing  (\ref{IE13}) and (\ref{IE22}) in (\ref{IES10}) we conclude that 
\begin{equation}\label{IES25}
\Big(  F_{a,b}^l (\psi)  ,  d\epsilon_{it}  \Big) \, = \,  F_1 (t ) \, + \, F_2(t)
\end{equation}
where
$$
   F_1 (t ) \, = \,  (-1)^{a+b} \, \delta_{b,a} \, B_{b,a}^l  \, [\Gamma_{\infty} : \Gamma'_{\infty} ]^2   \,  \frac{1}{2 \pi^2}
\; \int_{(3)}  \frac{1}{2 \pi i} \, H(s)   \int_0^\infty   \lambda^{s-1} 
\; d \lambda  \, ds \, + \,  \Big( 
\text{rapidly decreasing in } t \Big), $$

$$ F_2 (t) \, = \,  (-1)^{l+b}  \, i^{b+a} \,  B_{b,a}^l \, \frac{ 4 }{ \Gamma(1+it) } \cdot  \frac{1}{L(1+it,\chi_0) \cdot L(1-it,\chi_{-2a})} $$
\begin{equation}\label{F2}
\sum_{ u=0 }^{ l- \frac{1}{2}( \abs{a+b}+\abs{a-b})}  
\frac{(-1)^u \; \xi_{a}^l (b,u) }{\Gamma(1+l-u-it)}  \;  \int_{(3)}  \frac{1}{2 \pi i} \,   B(s) \; ds, 
\end{equation}
and 
$$   B(s) \, = \,  \frac{   H(s) \cdot L(\tfrac{s}{2}, \chi_{a+b} ) \cdot  L(\tfrac{s}{2}  +it, \chi_{a+b} ) \cdot  L(\tfrac{s}{2}-it, \chi_{-a+b} ) \cdot L(\tfrac{s}{2}, \chi_{-a+b} ) }{\pi^s \; \Gamma ( s+l-u) \cdot L( s, \chi_{2b} )} $$
$$ \cdot \, 
  \Gamma \big( \tfrac{s}{2}+l-u-\tfrac{1}{2} \abs{a+b}  \big) \cdot 
  \Gamma \big( \tfrac{s}{2}+\tfrac{1}{2} \abs{a+b}+it \big) \cdot 
  \Gamma \big( \tfrac{s}{2}+l-u-\tfrac{1}{2} \abs{a+b} -it \big) \cdot
    \Gamma \big( \tfrac{s}{2}  +  \tfrac{1}{2} \abs{a+b} \big).
$$

\bigskip
Applying the residue theorem and rearranging terms we have that
$$ \Big(  F_{a,b}^l (\psi)  ,  d\epsilon_{it}  \Big) \, = \, O(1) \, + \, 
4  (-1)^{l+b}  \, i^{b+a} \,  B_{b,a}^l  $$ 
$$ \sum_{ u=0 }^{ l- \frac{1}{2}( \abs{a+b}+\abs{a-b})}   \frac{(-1)^u \; \xi_{a}^l (b,u) }{  \Gamma(1+it) \cdot \Gamma(1+l-u-it) \cdot L(1+it,\chi_0) \cdot L(1-it,\chi_{-2a})} \,  \int_{(1)}  \frac{1}{2 \pi i} \,   B(s) \; ds $$
\begin{equation}\label{Cuspidal1}
+ \,  4(-1)^{l+b}  \, i^{b+a} \,  B_{b,a}^l \, 
\sum_{ u=0 }^{ l- \frac{1}{2}( \abs{a+b}+\abs{a-b})}  
  \frac{(-1)^u \; \xi_{a}^l (b,u) }{  \Gamma(1+it) \cdot \Gamma(1+l-u-it) \cdot L(1+it,\chi_0) \cdot L(1-it,\chi_{-2a})}  \; \text{Res}_{s=2} B(s).
\end{equation}

\bigskip
Now we take care of estimate the second adding in (\ref{Cuspidal1}), for that we consider  
\begin{equation}\label{Cuspidal2}
 A(t) \, := \, \frac{1}{  \Gamma(1+it) \cdot \Gamma(1+l-u-it) \cdot L(1+it,\chi_0) \cdot L(1-it,\chi_{-2a})} \,  \int_{(1)}  \frac{1}{2 \pi i} \,   B(s) \; ds.
\end{equation}

\bigskip
We make $s=1+i\tau$ in the formula for $B(s)$, then 
$$ A(t) \, = \,  \frac{1}{ \Gamma(1+it) \cdot \Gamma(1+l-u-it) \cdot L(1+it,\chi_0) \cdot L(1-it,\chi_{-2a})} $$
$$ \cdot \,  \int_{-\infty}^{\infty}
 \frac{  H(1+i\tau) \cdot L(\tfrac{1}{2}+i\tfrac{\tau}{2}, \chi_{a+b} ) \cdot  L(\tfrac{1}{2} +it+i\tfrac{\tau}{2} , \chi_{a+b} ) \cdot  L(\tfrac{1}{2}-it+i\tfrac{\tau}{2}, \chi_{-a+b} ) \cdot L(\tfrac{1}{2}+i\tfrac{\tau}{2}, \chi_{-a+b} ) }{2 \pi^{2+i\tau} \, \Gamma ( 1+l-u+i\tau) \cdot L( 1+ i\tau, \chi_{2b} )} $$
 \begin{equation}\label{cuspidal3}
 \cdot \; \Gamma \big( l+\tfrac{1}{2}-u-\tfrac{1}{2} \abs{a+b} +i\tfrac{\tau}{2} \big) \cdot 
  \Gamma \big( \tfrac{1}{2}+\tfrac{1}{2} \abs{a+b}+it + i\tfrac{\tau}{2} \big) \cdot 
  \Gamma \big(\tfrac{1}{2}+l-u-\tfrac{1}{2} \abs{a+b} -it +i\tfrac{\tau}{2} \big) \cdot
    \Gamma \big(  \tfrac{1}{2} +  \tfrac{1}{2} \abs{a+b}+i\tfrac{\tau}{2}  \big) \, d \tau.
 \end{equation}


\bigskip
It is  known that 
\begin{equation}\label{Bo1}
\ln^{-2} \, \abs{t} \ll \abs{ \zeta_K (1+it) } \ll \ln^2 \, \abs{t}.
\end{equation}

\bigskip 
Using results in \cite{Landau} and \cite{Ti} the above can be generalized as follows:
\begin{equation}\label{Bo4}
\ln^{-2} \, \abs{t} \ll \abs{ L(1+it, \chi_p ) } \ll \ln^2 \, \abs{t}, \; \; \forall p \in 4 \Z.
\end{equation}

\bigskip
The bound for Heath-Brown \cite{HB} says that for every $\epsilon > 0$ 
$$ \zeta_K (\tfrac{1}{2} + it) \ll_K t^{\rfrac{1}{3}+ \epsilon}, \; \; t \geq  1.  $$
The version that generalizes the previous that we will use is due to Kaufman \cite{Kau} and Sohne \cite{Soh} 
\begin{equation}\label{Bo2}
L(\tfrac{1}{2}+it, \chi_p ) \ll \big( 1+\abs{t} \big)^{\rfrac{1}{3}+ \epsilon}, \; \; \forall p \in 4 \Z, \; \; \forall \epsilon > 0.
\end{equation}

\bigskip
Stirling exponential asymptotics for $B(1+i \tau)$ as a function of $t$ gives
$$
\displaystyle \frac{ e^{-\tfrac{\pi}{2} \,\abs{\tfrac{\tau}{2}} } \cdot 
e^{-\tfrac{\pi}{2} \, \abs{t+\tfrac{\tau}{2} }} \cdot
e^{-\tfrac{\pi}{2} \, \abs{t-\tfrac{\tau}{2} }} \cdot
e^{-\tfrac{\pi}{2} \,\abs{\tfrac{\tau}{2}} } }{ e^{-\tfrac{\pi}{2} \, \abs{\tau} } }
\; \leq \; e^{- \pi t} ,$$
which cancels with the exponential growth of denominator in (\ref{Cuspidal2}).

\bigskip
Using (\ref{Bo1}), (\ref{Bo4}), (\ref{Bo2}), Stirling's formula and the rapid decay of $H(1+i\tau)$ we are reduced to estimate in $t$ the integral in (\ref{cuspidal3}), this is 
$$
\frac{ \ln^2 \, \abs{t} \cdot \ln^2 \, \abs{t} }{ \abs{t}^{1+l-u} }
\,  \int_{-\infty}^{\infty} H(1+ i \tau) \cdot 
\big( 1 + \abs{t+\tfrac{\tau}{2}} \big)^{\rfrac{1}{3}+\epsilon}
   \cdot \big( 1 + \abs{t-\tfrac{\tau}{2}} \big)^{\rfrac{1}{3}+\epsilon} \, d \tau
   \, = \, O\big( t^{-\rfrac{1}{3}-(l-u)+\epsilon} \big).
$$

\bigskip
This concludes that 
\begin{equation}\label{Bo5}
 A(t) \, = \, O\big( t^{-\rfrac{1}{3}-(l-u)+\epsilon} \big).
\end{equation}

\bigskip
Summarizing, from the formulas (\ref{Cuspidal1}) and (\ref{Bo5}) we have that 
$$ \Big(  F_{a,b}^l (\psi)  ,  d\epsilon_{it}  \Big) \, = \, O(1) \; + \; 4 (-1)^{l+b}  \, i^{b+a} \,  B_{b,a}^l  $$
\begin{equation}\label{F3}
\cdot \, \sum_{ u=0 }^{ l- \frac{1}{2}( \abs{a+b}+\abs{a-b})}  
  \frac{(-1)^u \; \xi_{a}^l (b,u) }{  \Gamma(1+it) \cdot \Gamma(1+l-u-it) \cdot L(1+it,\chi_0) \cdot L(1-it,\chi_{-2a})}  \; \text{Res}_{s=2} B(s).
\end{equation}

\bigskip
\bigskip
For the previous formula we require to find the residue of  $B(s)$ in $s=2$. This will be made for cases. First we consider  $a=b=0$ and $t \neq 0$. In this conditions we have that 
$$ B(s) \, = \,  \frac{  H(s) \cdot  \zeta_K(\tfrac{s}{2} )^2 \cdot  \zeta_K (\tfrac{s}{2}  +it) \cdot  \zeta_K (\tfrac{s}{2}-it) \cdot \; \Gamma \big( \tfrac{s}{2} +l-u \big) \cdot   \Gamma \big( \tfrac{s}{2}+it \big) \cdot 
  \Gamma \big( \tfrac{s}{2}+l-u-it \big) \cdot  \Gamma \big( \tfrac{s}{2}  \big) }{\pi^{s} \; \Gamma (s+l-u) \cdot \zeta_K ( s )}. $$

\bigskip
Write  $B(s) = \zeta_K \big(\frac{s}{2} \big)^2 \cdot G(s)$, with $G(s)$ holomorphic in $s=2$. Put
$$  \zeta_K \big( \tfrac{s}{2} \big) \, = \, \frac{A_{-1}}{s-2} + A_0 + O(s-2) \; \; \text{ as } s \longrightarrow 2. $$
Then $$ B(s) \, = \, \Big( \frac{A_{-1}}{s-2} + A_0 + O(s-2) \Big)^2 \Big( G(2)+G'(2)(s-2)+O(s-2)^2  \Big). $$
The residue is
\begin{equation}\label{IES27}
\text{Res}_{s=2} \, B(s) \, = \, G(2) \, A_{-1} \Big( 2 A_0 + A_{-1} \frac{G'}{G}(2) \Big).
\end{equation}

\bigskip
As $$ G(s) \, = \,   \frac{  H(s) \cdot   \zeta_K (\tfrac{s}{2}  +it) \cdot  \zeta_K (\tfrac{s}{2}-it) \cdot \; \Gamma \big( \tfrac{s}{2} +l-u \big) \cdot   \Gamma \big( \tfrac{s}{2}+it \big) \cdot 
  \Gamma \big( \tfrac{s}{2}+l-u-it \big) \cdot  \Gamma \big( \tfrac{s}{2}  \big) }{\pi^{s} \; \Gamma (s+l-u) \cdot \zeta_K ( s )} $$
it has that
\begin{equation}\label{IES28}
G(2) \, = \,   \frac{  H(2) \cdot   \zeta_K (1+it) \cdot  \zeta_K (1-it) \cdot  \Gamma ( 1+it ) \cdot \Gamma ( 1+l-u-it ) }{\pi^2 \, (1+l-u) \cdot \zeta_K ( 2 )}.
\end{equation}

\bigskip
On the other hand,
\begin{equation}\label{IES30}
\frac{G'}{G}(2) \, = \,  \frac{H'}{H}(2) + \frac{ \zeta_K'(1+it)}{2 \, \zeta_K(1+it)} +  \frac{ \zeta_K'(1-it)}{2 \, \zeta_K(1-it)}  +  \frac{ \Gamma' \big( 1+it \big) }{ 2 \, \Gamma \big( 1+it \big)}  +  \frac{ \Gamma' \big(1+l-u -it \big) }{ 2 \, \Gamma \big(1+ l-u -it \big) } + C,
\end{equation}
where $C$ is a constant that does not depend on $t$.

\bigskip
The bound to Dirichlet L-functions by Landau says
\begin{equation}\label{IES31}
 \frac{\zeta_K' (1+it)}{\zeta_K (1+it)}  \ll_K  \frac{\ln \, \abs{t}}{\ln \ln \, \abs{t}}
\end{equation}
as $ t \longrightarrow \infty$. 

\bigskip
It is known that (see Laaksonen \cite{Niko} formula (A.8) page 155)
\begin{equation}\label{IES32}
\frac{\Gamma' }{\Gamma} (1+it) \, = \, \ln \, \abs{t} \; + \; O(1).
\end{equation}

\bigskip
Moreover
\begin{equation}\label{IES33}
 A_{-1} \, = \,  \text{Res}_{s=1} \, \zeta_K(s) \, = \, \frac{\pi}{4}. 
\end{equation}

\bigskip
For the lemma (\ref{Ap2}) if $l-u \geq 1$
\begin{equation}\label{IES34.5}
 \frac{ \Gamma' \big(1+l-u -it \big) }{ 2 \, \Gamma \big( 1+l-u -it \big) } \, = \, \frac{1}{2} \sum_{k=0}^{l-u-1} \frac{1}{k+1-it} \; + \; \frac{\Gamma'(1-it)}{2 \, \Gamma(1-it)} \, = \, O(1) \, + \, \frac{1}{2} \ln t
\end{equation}
as $t \longrightarrow \infty$.

\bigskip 
Replacing (\ref{IES31}), (\ref{IES32}) and (\ref{IES34.5}) in (\ref{IES30})  we have
\begin{equation}\label{IES34.6}
\frac{G'}{G}(2) \, = \, O(1) \, + \, O \Big( \frac{\ln t}{\ln \ln t} \Big) \, + \,   \ln t.
\end{equation}
 
\bigskip
For (\ref{IES27}) and (\ref{IES34.6}),
\begin{equation}\label{ESI10}
\text{Res}_{s=2} \, B(s) \, = \, G(2) \, \Big( O(1) \, + \,  A_{-1}^2 \, \ln t \, + \,  O \Big( \frac{\ln t}{\ln \ln t} \Big)  \Big).
\end{equation}

From  (\ref{ESI10}) and (\ref{IES28})  the formula  (\ref{F3}) is transformed as follows


\begin{equation}\label{IES98}
\Big(  F_{0,0}^l (\psi)  ,  d\epsilon_{it}  \Big) \, = \, O(1) \; + \;  \sum_{ u=0 }^{ l }     \frac{ (-1)^u \; \xi_{0}^l (0,u) }{ 1+l-u}  \bigg[ O \Big( \frac{\ln t}{\ln \ln t} \Big) \; + \;  (-1)^{l}  \,  \frac{H(2)}{4 \, \zeta_K(2)} \,  \ln t \bigg].
\end{equation}

\bigskip
In particular, 
\begin{equation}\label{IES99}
\Big(  F_{0,0}^0 (\psi)  ,  d\epsilon_{it}  \Big) \, = \, O(1) \; + \;  O \Big( \frac{\ln t}{\ln \ln t} \Big) \; + \;    \frac{H(2)}{4 \, \zeta_K(2)} \,  \ln t.
\end{equation}

\bigskip
For $l \geq 1 $ for the lemma (\ref{SumaEs0}) we know that the sum in $u$ in the equation (\ref{IES98}) is 0. Therefore
\begin{equation}\label{IES100}
\Big(  F_{0,0}^l (\psi)  ,  d\epsilon_{it}  \Big) \, = \, O(1).
\end{equation}

\bigskip
\bigskip
We consider now the case  $a=b \neq 0$ and $t \neq 0$. Then, 
$$  B(s) \, = \, \frac{ H(s) \cdot L(\tfrac{s}{2}, \chi_{2a} ) \cdot  L(\tfrac{s}{2} +it, \chi_{2a} ) \cdot  \zeta_K \big( \tfrac{s}{2}-it  \big) \cdot \zeta_K \big(\tfrac{s}{2} \big)  }{ \pi^{s} \; \Gamma ( s+l-u) \cdot L( s, \chi_{2a} )} \hspace{7cm} $$
$$ \hspace{3cm} \cdot \,  \Gamma \big( \tfrac{s}{2}+l-u- \abs{a}  \big) \cdot \Gamma \big( \tfrac{s}{2}+ \abs{a}+it \big) \cdot 
  \Gamma \big( \tfrac{s}{2}+l-u- \abs{a} -it \big) \cdot
    \Gamma \big( \tfrac{s}{2}  +  \abs{a} \big). $$

\bigskip  
It is clear then that  $\infty$ is a simple pole of $B(s)$ in $s=2$. For (\ref{IES33})
$$
\text{Res}_{s=2} \, B(s) \, = \,  \frac{ H(2) \cdot L(1, \chi_{2a} ) \cdot  L(1 +it, \chi_{2a} ) \cdot  \zeta_K (1-it)}{4  \pi \, \Gamma ( 2+l-u) \cdot L( 2, \chi_{2a} )} \hspace{6cm} $$
\begin{equation}\label{IES37}
\hspace{2.8cm} \cdot  \Gamma \big( 1+l-u- \abs{a}  \big) \cdot 
  \Gamma \big( 1+ \abs{a}+it \big) \cdot   \Gamma \big( 1+l-u-\abs{a} -it \big) \cdot     \Gamma \big( 1 +  \abs{a} \big).
\end{equation}

\bigskip
Replacing (\ref{IES37}) in (\ref{F3}) we see that
$$ \Big(  F_{a,a}^l (\psi)  ,  d\epsilon_{it}  \Big) \, = \, O(1) \; + \;  (-1)^{l}  \,
 \frac{ H(2) \cdot L(1, \chi_{2a} ) \cdot  \Gamma ( 1 +  \abs{a} )}{\pi \, L( 2, \chi_{2a} )} \, \bigg[  \sum_{ u=0 }^{ l- \abs{a}}  (-1)^u \; \xi_{a}^l (a,u) \, \frac{  \Gamma ( 1+l-u- \abs{a} ) }{ \Gamma ( 2+l-u) }  \bigg] $$
\begin{equation}\label{IES101}
\cdot \,  \frac{ L(1 +it, \chi_{2a} )}{L(1+it,\chi_0)} 
\, \frac{  \zeta_K (1-it) }{ L(1-it,\chi_{-2a}) } 
\, \frac{  \Gamma ( 1+ \abs{a}+it ) }{ \Gamma(1+it)}
\, \frac{  \Gamma ( 1+l-u-\abs{a} -it ) }{ \Gamma(1+l-u-it) }.
\end{equation}

\bigskip
Applying (\ref{Bo1}), (\ref{Bo4})  and the Stirling's formula in the identity  (\ref{IES101}) we conclude that 
\begin{equation}\label{IES102}
\Big(  F_{a,a}^l (\psi)  ,  d\epsilon_{it}  \Big)  \, = \, O(1).
\end{equation}


\bigskip
\bigskip
For the case $a=-b \neq 0$ and $t \neq 0$ we have that 
$$ B(s) \, = \,  \frac{  H(s) \cdot \zeta_K (\tfrac{s}{2} ) \cdot  \zeta_K (\tfrac{s}{2} +it ) \cdot  L(\tfrac{s}{2}-it, \chi_{-2a} ) \cdot L(\tfrac{s}{2}, \chi_{-2a} ) \cdot  \Gamma \big( \tfrac{s}{2}+l-u \big) \cdot 
  \Gamma \big( \tfrac{s}{2}+it \big) \cdot 
  \Gamma \big( \tfrac{s}{2}+l-u-it \big) \cdot
    \Gamma \big( \tfrac{s}{2} \big)}{\pi^{s} \, \Gamma ( s+l-u) \cdot L( s, \chi_{-2a} )}. $$
    
\bigskip  
As in the previous case $\infty$ is a simple pole of $B(s)$ in  $s=2$. For (\ref{IES33})
\begin{equation}\label{IES42}
\text{Res}_{s=2} \, B(s)  =   \frac{  H(2) \cdot  \zeta_K (1 +it ) \cdot  L(1-it, \chi_{-2a} ) \cdot L(1, \chi_{-2a} ) \cdot  \Gamma \big( 1+l-u \big) \cdot 
  \Gamma \big( 1+it \big) \cdot 
  \Gamma \big( 1+l-u-it \big) } {4\pi \, \Gamma ( 2+l-u) \cdot L( 2, \chi_{-2a} )}.
\end{equation}    

\bigskip
Replacing (\ref{IES42}) in (\ref{F3}) and simplifying
\begin{equation}\label{IES107}
\Big(  F_{a,-a}^l (\psi)  ,  d\epsilon_{it}  \Big) \, = \, O(1) \; + \;  (-1)^{a}  \,  \frac{H(2) \cdot L(1,\chi_{-2a}) }{\pi \, L( 2, \chi_{-2a})} \;
\sum_{ u=0 }^{ l- \abs{a}} \frac{ (-1)^u \, \xi_{a}^l (-a,u) }{1+l-u} \, = \, O(1).
\end{equation}


\bigskip
\bigskip
If $ a \neq \pm b$ and  $ t \neq 0$ it has that $B(s)$ is analytic in $s=2$ and then $ \text{Res}_{s=2} \, B(s) \; =  \; 0$. In this case, for (\ref{F3})
\begin{equation}\label{IES108}
 \Big(  F_{a,b}^l (\psi)  ,  d\epsilon_{it}  \Big) \, = \, O(1).
\end{equation}

\bigskip
The results in (\ref{IES99}), (\ref{IES100}), lemma (\ref{Integral}), (\ref{IES102}), (\ref{IES107}) and (\ref{IES108}) imply the next:
\begin{prop}\label{IncEisSeries}
For   $l=a=b=0$, $ t \neq 0$ we have to 
$$ \Big(  F_{0,0}^0 (\psi)  ,  d\epsilon_{it}  \Big) \, = \, O(1) \; + \; O \Big( \frac{\ln t}{\ln \ln t} \Big) \; + \;   \frac{1}{4 \, \zeta_K(2)} \bigg(  \int_{\Gamma \backslash G} F_{00}^0 (\psi) (g) \, d g  \bigg) \,  \ln t,$$
then
\[
\Big(  F_{0,0}^0 (\psi)  ,  d\epsilon_{it}  \Big) \, \sim  \,  \frac{1}{4 \, \zeta_K(2)} \bigg(  \int_{\Gamma \backslash G} F_{00}^0 (\psi) (g) \, d g  \bigg) \,  \ln t.
\]
In other case and with $ t \neq 0$
$$ \Big(  F_{a,b}^l (\psi)  ,  d\epsilon_{it}  \Big)  \; =  \;   O(1). $$
   

\end{prop}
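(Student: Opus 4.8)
The entire derivation of this section has already reduced the inner product to formula (\ref{F3}), which expresses $\big(F_{a,b}^l(\psi),d\epsilon_{it}\big)$ as $O(1)$ plus an explicit multiple of $\mathrm{Res}_{s=2}B(s)$; the off-line part of the contour integral was absorbed into the $O(1)$ by the subconvexity-driven decay estimate (\ref{Bo5}). The plan is therefore to evaluate this residue case by case according to how the four $L$-factors in the numerator of $B(s)$ behave at $s=2$, and then to translate the one surviving main term into the intrinsic quantity $\frac{1}{4\,\zeta_K(2)}\int_{\Gamma\backslash G}F_{00}^0(\psi)\,dg$.

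First I would dispose of the generic cases. When $a\neq\pm b$ all four Hecke $L$-factors are entire near $s=2$, so $B$ is holomorphic there, the residue vanishes, and (\ref{IES108}) gives $O(1)$. When $a=b\neq 0$ or $a=-b\neq 0$ exactly one factor $\zeta_K(s/2)$ contributes a simple pole at $s=2$; taking the residue with $\mathrm{Res}_{s=1}\zeta_K=\pi/4$ from (\ref{IES33}) yields (\ref{IES37}), respectively (\ref{IES42}). Substituting into (\ref{F3}) and applying the two-sided bounds (\ref{Bo4}) for $L(1\pm it,\chi)$ together with Stirling's formula forces the $t$-dependence to cancel, so these cases are $O(1)$, as recorded in (\ref{IES102}) and (\ref{IES107}).

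The delicate case is $a=b=0$, where $B(s)=\zeta_K(s/2)^2\,G(s)$ has a double pole at $s=2$. Here the plan is to expand $\zeta_K(s/2)=A_{-1}/(s-2)+A_0+O(s-2)$ and read off the residue from (\ref{IES27}) as $G(2)\,A_{-1}\big(2A_0+A_{-1}(G'/G)(2)\big)$. The crucial step is to identify which ingredient of the logarithmic derivative (\ref{IES30}) actually grows in $t$: the terms $\zeta_K'/\zeta_K(1\pm it)$ contribute only $O(\ln t/\ln\ln t)$ by Landau's bound (\ref{IES31}), whereas the archimedean factor $\Gamma(1+l-u-it)$ supplies, via (\ref{IES34.5}) and (\ref{IES32}), a clean $\tfrac12\ln t$. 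Combined with $A_{-1}^2=(\pi/4)^2$ this produces the leading $\ln t$ term of (\ref{IES98}). For $l\geq 1$ the accompanying sum over $u$ vanishes identically by lemma (\ref{SumaEs0}), collapsing the whole expression to $O(1)$ as in (\ref{IES100}); only $l=0$ survives, giving (\ref{IES99}).

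Finally I would invoke lemma (\ref{Integral}) to rewrite the constant $H(2)$ — the value at $s=2$ of the Mellin transform of $\psi$ — as $\int_{\Gamma\backslash G}F_{00}^0(\psi)(g)\,dg$, so that the coefficient $H(2)/(4\,\zeta_K(2))$ of $\ln t$ becomes the stated $\frac{1}{4\,\zeta_K(2)}\int_{\Gamma\backslash G}F_{00}^0(\psi)\,dg$. Collecting the five cases then yields the two asserted regimes. I expect the main obstacle to be precisely the double-pole analysis in the $a=b=0$ case: one must verify that the two potential sources of $\ln t$ coming from $\zeta_K'/\zeta_K$ are genuinely of lower order, and that the single surviving $\ln t$ originates entirely from the Gamma factor, so that the leading coefficient is pinned down exactly rather than merely up to its order of magnitude.
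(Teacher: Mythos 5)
You follow the paper's route step for step --- the reduction to (\ref{F3}), the four-way case analysis on $(a,b)$, the simple-pole residues (\ref{IES37}) and (\ref{IES42}), the vanishing $u$-sum of lemma (\ref{SumaEs0}) for $l\geq 1$, and lemma (\ref{Integral}) to convert $H(2)$ into $\int_{\Gamma\backslash G}F_{00}^0(\psi)(g)\,dg$ --- so there is no divergence of method. But there is a concrete error in the one step you yourself single out as the crux. In the case $a=b=0$ the function $G(s)$ contains \emph{two} $t$-dependent Gamma factors, $\Gamma\big(\tfrac{s}{2}+it\big)$ and $\Gamma\big(\tfrac{s}{2}+l-u-it\big)$, and the logarithmic derivative (\ref{IES30}) accordingly carries the two terms $\frac{\Gamma'(1+it)}{2\,\Gamma(1+it)}$ and $\frac{\Gamma'(1+l-u-it)}{2\,\Gamma(1+l-u-it)}$. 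By (\ref{IES32}) the first already contributes $\tfrac{1}{2}\ln t + O(1)$, and by (\ref{IES34.5}) the second contributes another $\tfrac{1}{2}\ln t + O(1)$; it is their \emph{sum} that yields the full $\ln t$ in (\ref{IES34.6}). Your accounting attributes the growth solely to the factor $\Gamma(1+l-u-it)$, i.e. to a single $\tfrac{1}{2}\ln t$, and never mentions the factor $\Gamma\big(\tfrac{s}{2}+it\big)$ at all.

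Chasing this through the residue formula $\text{Res}_{s=2}B(s)=G(2)\,A_{-1}\big(2A_0+A_{-1}\tfrac{G'}{G}(2)\big)$ of (\ref{IES27}), your version would produce $A_{-1}^2\cdot\tfrac{1}{2}\ln t$ in place of $A_{-1}^2\ln t$ in (\ref{ESI10}), and after the cancellation of $G(2)$ from (\ref{IES28}) against the denominator $\Gamma(1+it)\,\Gamma(1+l-u-it)\,\zeta_K(1+it)\,\zeta_K(1-it)$ of (\ref{F3}), the leading term of (\ref{IES99}) would come out as $\frac{H(2)}{8\,\zeta_K(2)}\,\ln t$ --- half of the coefficient $\frac{H(2)}{4\,\zeta_K(2)}$ that the proposition asserts and that lemma (\ref{Integral}) identifies with $\frac{1}{4\,\zeta_K(2)}\int_{\Gamma\backslash G}F_{00}^0(\psi)(g)\,dg$. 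Since the entire content of the first assertion is the exact constant in front of $\ln t$ (the orders of magnitude of the error terms being routine), this factor of two is a genuine gap: to close it you must add the contribution $\frac{\Gamma'(1+it)}{2\,\Gamma(1+it)}=\tfrac{1}{2}\ln t+O(1)$ of the second archimedean factor. Everything else in your plan (the generic case $a\neq\pm b$, the two simple-pole cases, the collapse for $l\geq 1$, and the final identification of $H(2)$) agrees with the paper.
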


\bigskip
\bigskip
\bigskip
\bigskip
\section{Appendix}

\begin{lemm}\label{SumaEs0}
Let $l \in \Z$, $l \geq 1$, we have 
\[
 \sum_{u=0}^l \frac{(-1)^u \; \xi_{0}^l (0,u) }{1+l-u} \, = \, 0.
\]
\begin{proof}
We  denote by $S$ the previous sum, the following equations are valid
\[
 S   \, = \,  \sum_{u=0}^l \frac{(-1)^u  }{1+l-u} \, \frac{u! (2l-u)!}{l! \, l!} \, \binom{l}{u} \, \binom{l}{u}   \, = \,  \frac{1}{l!} \,  \sum_{u=0}^l (-1)^u  \, \frac{(2l-u)!}{(1+l-u)!}  \, \binom{l}{u}.
\]
Making the change of variable $a=l-u$ and using property $\binom{l}{k} =  \binom{l}{l-k}$ it follows that
\[
 S \, = \,  \frac{(-1)^l}{l!} \,  \sum_{a=0}^l (-1)^a   \, \binom{l}{a} \, \frac{\Gamma(a+l)}{\Gamma(a+1)}.
\]
By formula 0.160 (2) in  Gradshteyn and Ryzhik \cite{GR},
$$ S \, = \,  \frac{(-1)^l}{l!} \,  \frac{\Gamma(l)}{\Gamma(1+l) \; \Gamma(1-l)} \, = \, 0. $$
\end{proof}
\end{lemm}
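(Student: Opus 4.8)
The plan is to reduce the sum to a pure binomial identity by inserting the explicit value of $\xi_0^l(0,u)$ and then invoking the vanishing of a high-order finite difference of a low-degree polynomial.

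First I would specialize the definition of $\xi_k^l(v,u)$ to $k=v=0$. Since then $\abs{v+k}=\abs{v-k}=0$, both upper binomial arguments collapse to $l$, giving
\[
 \xi_0^l(0,u) \, = \, \frac{u!\,(2l-u)!}{l!\,l!}\,\binom{l}{u}^2 .
\]
Substituting this into the sum $S$ and using $\frac{u!}{l!}\binom{l}{u}=\frac{1}{(l-u)!}$ together with $(1+l-u)\,(l-u)! = (1+l-u)!$ to absorb the denominator $1+l-u$, the expression collapses to
\[
 S \, = \, \frac{1}{l!}\sum_{u=0}^l (-1)^u\,\frac{(2l-u)!}{(1+l-u)!}\,\binom{l}{u}.
\]

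Next I would perform the reindexing $a = l-u$, which turns the factorials into $(2l-u)! = (l+a)!$ and $(1+l-u)! = (1+a)!$ and uses $\binom{l}{u}=\binom{l}{a}$, yielding
\[
 S \, = \, \frac{(-1)^l}{l!}\sum_{a=0}^l (-1)^a\,\binom{l}{a}\,\frac{(l+a)!}{(1+a)!}.
\]
The key observation is that $\frac{(l+a)!}{(1+a)!} = (a+2)(a+3)\cdots(a+l)$ is a \emph{polynomial} in $a$ of degree $l-1$. For $l \geq 1$ this degree is strictly smaller than $l$, so the alternating binomial sum is, up to sign, the $l$-th forward finite difference at $0$ of a polynomial of degree below $l$, and therefore vanishes; equivalently $\sum_{a=0}^l (-1)^a\binom{l}{a}P(a)=0$ for every polynomial $P$ with $\deg P < l$. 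This gives $S=0$, as claimed.

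The one point to verify with care is the cancellation in the second step: one must confirm that the factor $\frac{1}{1+l-u}$ genuinely merges into a factorial, so that after reindexing the summand is a binomial coefficient times a \emph{polynomial} in $a$ (of degree $l-1$) and not a rational function. It is exactly this polynomiality, below the critical degree $l$, that forces the sum to zero; I expect no further obstacle. As an alternative to the finite-difference argument, one could evaluate the reindexed sum in closed form via a $\Gamma$-function identity such as Gradshteyn--Ryzhik~0.160(2), in which case the resulting expression carries a factor $1/\Gamma(1-l)$ that vanishes for every integer $l\geq 1$.
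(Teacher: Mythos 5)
Your proof is correct, and while it begins exactly as the paper's does, it finishes by a genuinely more elementary argument. The reduction is identical in both: substitute $\xi_0^l(0,u)=\frac{u!\,(2l-u)!}{l!\,l!}\binom{l}{u}^2$, merge the factor $\frac{1}{1+l-u}$ into $(1+l-u)!$ to get $S=\frac{1}{l!}\sum_{u=0}^l(-1)^u\frac{(2l-u)!}{(1+l-u)!}\binom{l}{u}$, and reindex with $a=l-u$. At that point the paper cites Gradshteyn--Ryzhik 0.160(2), a $\Gamma$-function evaluation whose closed form carries the factor $1/\Gamma(1-l)$, which vanishes for every integer $l\geq 1$; you instead observe that $\frac{(l+a)!}{(1+a)!}=(a+2)(a+3)\cdots(a+l)$ is a polynomial in $a$ of degree $l-1<l$, so that $\sum_{a=0}^l(-1)^a\binom{l}{a}P(a)$ is, up to sign, an $l$-th finite difference of a polynomial of degree below $l$ and hence zero. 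Your finish is self-contained (no table identity needed), and it also gets the reindexed summand exactly right: the paper's corresponding display writes $\frac{\Gamma(a+l)}{\Gamma(a+1)}$, whereas the substitution $a=l-u$ actually produces $\frac{(l+a)!}{(1+a)!}=\frac{\Gamma(a+l+1)}{\Gamma(a+2)}$, which differs by the factor $\frac{a+l}{a+1}$. That slip is harmless to the paper's conclusion, since GR 0.160(2) yields the vanishing factor $1/\Gamma(1-l)$ in either case, but your expression is the correct one, and correctness of the exact form is what your degree-count argument rests on. Your closing remark that the $\Gamma$-identity would equally close the argument recovers the paper's route, so nothing is missing from your proposal.
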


\begin{lemm}\label{Ap1}
Let $l,k,m \in \tfrac{1}{2} \Z$, $l \geq 0$, $l \equiv k \equiv m \text{ mod } 1$ and  such that $k,m \in [-l,l]$. Moreover, $B \in SU(2)$, $g \in SL(2,\mathbb{C})$. Then
\[
  f_{km}^l(gB,s)   \, = \,  \sum_{\substack{ a=-l \\ a \in \tfrac{1}{2} \Z \\ a \equiv l \text{ mod } 1 }}^{l}  \overline{D^l_{ka} \big(\Phi_{ B^{-1}} \big)} \cdot f_{am}^l(g,s).
\]
In particular, for $g=n[z] a[\lambda] K$ we obtain
\begin{equation}\label{PropE}
 {E}_{km}^l (g,s)   \, = \,   \sum_{\substack{ a=-l \\ a \in \tfrac{1}{2} \Z \\ a \equiv l \text{ mod } 1 }}^{l}   \overline{D^l_{ka} \big( \Phi_{ K^{-1}} \big)}  \cdot E_{am}^l (z+\lambda j,s).
\end{equation}
\end{lemm}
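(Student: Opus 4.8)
The plan is to reduce both assertions to the defining formula (\ref{Deffff}) and to track how its two factors transform under right multiplication by $B\in\SU(2)$. The essential geometric fact is that $\SU(2)$ is exactly the stabilizer of $j$ in $\H^3$: evaluating the action (\ref{Pre3}) at $z=0$, $\lambda=1$ for an arbitrary $B=K[\alpha,\beta]$ gives $B(j)=j$, so the height factor satisfies $\Im\,(gB)(j)=\Im\,g\big(B(j)\big)=\Im\,g(j)$ and is therefore unaffected by $B$. Consequently only the Wigner coefficient in (\ref{Deffff}) is touched.

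First I would record how the $\SU(2)$-component $T$ behaves under right translation. Writing the complex Iwasawa decomposition $g=n[z]\,a[\tfrac{1}{\lambda}]\,T(g)$ as in (\ref{Pre4}), right multiplication by $B\in\SU(2)$ gives $gB=n[z]\,a[\tfrac{1}{\lambda}]\,\big(T(g)B\big)$; since $T(g)B\in\SU(2)$ this is again of Iwasawa type, so by uniqueness $T(gB)=T(g)\,B$, whence $T(gB)^{-1}=B^{-1}\,T(g)^{-1}$.

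Substituting these two facts into (\ref{Deffff}), the homomorphism property (\ref{HomorfismoPhi2}) factors $\Phi_{T(gB)^{-1}}=\Phi_{B^{-1}}\circ\Phi_{T(g)^{-1}}$, and the multiplicativity (\ref{WI10}) of the Wigner coefficients expands
\[
D_{km}^l\big(\Phi_{B^{-1}}\circ\Phi_{T(g)^{-1}}\big)=\sum_{\substack{ a=-l \\ a \in \tfrac{1}{2}\Z \\ a \equiv l \text{ mod } 1 }}^{l} D_{ka}^l\big(\Phi_{B^{-1}}\big)\,D_{am}^l\big(\Phi_{T(g)^{-1}}\big).
\]
Taking complex conjugates and reattaching the unchanged factor $\Im\,g(j)^{1+s}$ recognizes each summand as $\overline{D_{ka}^l(\Phi_{B^{-1}})}\,f_{am}^l(g,s)$, which is the first identity.

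For the Eisenstein series I would apply this termwise inside the defining sum (\ref{ES}). With $g=n[z]a[\lambda]K$ and $\sigma g=\big(\sigma\,n[z]a[\lambda]\big)K$, the first part (with $B=K$) gives $f_{km}^l(\sigma\,n[z]a[\lambda]K,s)=\sum_{a}\overline{D_{ka}^l(\Phi_{K^{-1}})}\,f_{am}^l(\sigma\,n[z]a[\lambda],s)$. Because the coefficients $\overline{D_{ka}^l(\Phi_{K^{-1}})}$ are independent of $\sigma$, they factor out of the summation over $\Gamma'_\infty\backslash\Gamma$, leaving $E_{am}^l(n[z]a[\lambda],s)=E_{am}^l(z+\lambda j,s)$ by (\ref{def0004}); this is (\ref{PropE}). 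The argument is formal once the two transformation rules are in place, so the only delicate point is verifying $T(gB)=T(g)B$ and $B(j)=j$ for $B\in\SU(2)$; everything after that is a direct application of (\ref{WI10}).
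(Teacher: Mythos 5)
Your proof is correct and takes essentially the same route as the paper's: unwind definition (\ref{Deffff}), use $T(gB)=T(g)B$ and the fact that $B\in\SU(2)$ fixes $j$ to factor $\Phi_{T(gB)^{-1}}=\Phi_{B^{-1}}\circ\Phi_{T(g)^{-1}}$ via (\ref{HomorfismoPhi2}), then expand with the multiplicativity (\ref{WI10}) and take conjugates. The only difference is that you explicitly verify the two transformation facts and the termwise passage from the first identity to (\ref{PropE}), steps the paper uses implicitly without comment.
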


\begin{proof}
We have the following chain of identities
\begin{align}
 f_{km}^l(gB,s)  & \, = \,  \overline{ D_{km}^l \big( \Phi_{ T( gB )^{-1}} \big) } \; \Im \,  gB (j)^{1+s}   &&   \textrm{by (\ref{Deffff})}  \nonumber   \\
  & \, = \,     \overline{ D_{km}^l \big( \Phi_{ B^{-1}} \circ \Phi_{ T(g)^{-1}}  \big) } \; \Im \,  g(j)^{1+s}  
 &&    \nonumber      \\
  & \, = \,    \sum_{\substack{ a=-l \\ a \in \tfrac{1}{2} \Z \\ a \equiv l \text{ mod } 1 }}^{l}  \overline { D^l_{am} \big(\Phi_{ T(g)^{-1}} \big) }
                       \cdot  \overline{D^l_{ka} \big( \Phi_{ B^{-1}} \big)}
                   \; \Im \,  g (j)^{1+s}  &&  \text {by  (\ref{WI10})} \nonumber \\
   & \, = \, \sum_{\substack{ a=-l \\ a \in \tfrac{1}{2} \Z \\ a \equiv l \text{ mod } 1 }}^{l}   \overline{D^l_{ka} \big( \Phi_{ B^{-1}} \big)} \cdot f_{am}^l(g,s).       && \nonumber  
 \end{align}
\end{proof}

\begin{lemm}\label{Ap2}
Let $s \in \mathbb{C}$, $m \in \mathbb{Z}$ such that $m \geq 1$ and  $s+k \neq 0$ for $k=\{0,1, \ldots ,m-1\}$. Then we have 
\[
  \frac{\Gamma'(m+s)}{\Gamma(m+s)} \, = \, \sum_{k=0}^{m-1} \frac{1}{s+k} \, + \, \frac{\Gamma'(s)}{\Gamma(s)}.
\]
\end{lemm}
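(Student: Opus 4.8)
The plan is to reduce the identity to the one-step functional equation for the logarithmic derivative of $\Gamma$ and then iterate. The starting point is the fundamental relation $\Gamma(z+1) = z\,\Gamma(z)$. First I would take the logarithmic derivative of both sides: applying $\tfrac{d}{dz}\log(\cdot)$ turns the product on the right into a sum, giving the recurrence
\[
\frac{\Gamma'(z+1)}{\Gamma(z+1)} \, = \, \frac{1}{z} \, + \, \frac{\Gamma'(z)}{\Gamma(z)},
\]
valid whenever $z \neq 0$ and $z$ is not a pole of $\Gamma$.

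With this recurrence established, I would prove the stated formula by induction on $m$. For the base case $m=1$ the claim is precisely the recurrence evaluated at $z=s$, which is legitimate because the hypothesis with $k=0$ gives $s \neq 0$. For the inductive step, assume the formula holds for some $m \geq 1$. Applying the recurrence once more at $z = s+m$ --- allowed since $s+m \neq 0$, which is the instance $k=m$ of the hypothesis in the case $m+1$ --- yields $\tfrac{\Gamma'(s+m+1)}{\Gamma(s+m+1)} = \tfrac{1}{s+m} + \tfrac{\Gamma'(s+m)}{\Gamma(s+m)}$. Substituting the inductive hypothesis for the last term and absorbing $\tfrac{1}{s+m}$ into the sum produces $\tfrac{\Gamma'(s+m+1)}{\Gamma(s+m+1)} = \tfrac{\Gamma'(s)}{\Gamma(s)} + \sum_{k=0}^{m} \tfrac{1}{s+k}$, completing the induction. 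Equivalently, one may telescope directly, writing $\tfrac{\Gamma'(s+m)}{\Gamma(s+m)} - \tfrac{\Gamma'(s)}{\Gamma(s)} = \sum_{k=0}^{m-1}\bigl(\tfrac{\Gamma'(s+k+1)}{\Gamma(s+k+1)} - \tfrac{\Gamma'(s+k)}{\Gamma(s+k)}\bigr)$ and evaluating each difference by the recurrence.

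I expect no substantive obstacle here; the computation is routine once the recurrence is in place. The only point requiring care is the bookkeeping of the hypothesis $s+k \neq 0$ for $k \in \{0, \dots, m-1\}$, which is exactly what licenses dividing by $s+k$ at the step passing from $s+k$ to $s+k+1$. In the intended application one has $s = -it$ with $t \neq 0$ (see (\ref{IES34.5})), so $s$ lies on the imaginary axis and is therefore never a nonpositive integer; consequently every value of $\Gamma'/\Gamma$ appearing is finite and the identity holds as a genuine equality of complex numbers.
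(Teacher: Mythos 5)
Your proof is correct and is essentially the paper's argument with the two steps in the opposite order: the paper first iterates the functional equation to get $\Gamma(m+s) = \prod_{k=0}^{m-1}(s+k)\,\Gamma(s)$ and then differentiates and divides, whereas you take the logarithmic derivative of the one-step relation $\Gamma(z+1)=z\,\Gamma(z)$ first and then iterate by induction/telescoping. The mathematical content — functional equation plus logarithmic differentiation, with the hypothesis $s+k\neq 0$ licensing each division — is identical.
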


\begin{proof}

We know that  $\Gamma(1+s) = s  \,  \Gamma(s)$, so
\begin{equation}\label{Le0}
\Gamma(m+s) \, = \,  \prod_{k=0}^{m-1} (s+k) \cdot \Gamma (s).
\end{equation}
Derivating, 
\begin{equation}\label{Le1}
\Gamma'(m+s) \, = \, \Bigg[  \prod_{k=0}^{m-1} (s+k) \Bigg]' \cdot \Gamma (s) \; + \; \Bigg[ \prod_{k=0}^{m-1} (s+k) \Bigg] \cdot \Gamma' (s).
\end{equation}
Dividing the expression in (\ref{Le1}) by (\ref{Le0}) we get the lemma.
\end{proof}

\begin{lemm}\label{Integral}
It is true that
\[
 \int_{\Gamma \backslash G} F_{00}^0 (\psi) (g) \, d g \, = \, H(2).
\]
\end{lemm}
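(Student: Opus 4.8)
The plan is to reduce the left-hand side to a classical Rankin--Selberg unfolding and then read off the answer from the definition \eqref{H} of $H$ evaluated at $s=2$. First I would specialize Definition \ref{DIES} to $l=a=b=0$. Since the representation $\sigma_0$ is the trivial one-dimensional representation of $\SU(2)$, its single matrix coefficient satisfies $D_{00}^0 \equiv 1$, so the twisting factor disappears and
\[
F_{0,0}^0(\psi)(g) \, = \, \sum_{\sigma \in \Gamma'_\infty\backslash\Gamma} \psi\big(\Im\,\sigma g(j)\big),
\]
which in particular is independent of the frame variable $K \in \SU(2)$.

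Next, using $dg = \tfrac{dx\,dy\,d\lambda\,dk}{\lambda^3}$ together with $\int_{\SU(2)}dk = 1$, the integral over $\Gamma\backslash G$ collapses onto the base $\Gamma\backslash\H^3$:
\[
\int_{\Gamma\backslash G} F_{0,0}^0(\psi)(g)\,dg \, = \, \int_{\Gamma\backslash\H^3} \sum_{\sigma \in \Gamma'_\infty\backslash\Gamma}\psi\big(\Im\,\sigma(z+\lambda j)\big)\,dV .
\]
I would then unfold: because $\psi\in C_0^\infty((0,\infty))$, the sum-integral converges absolutely, so the sum over $\Gamma'_\infty\backslash\Gamma$ against a fundamental domain for $\Gamma$ recombines into a single integral over a fundamental domain for $\Gamma'_\infty$, giving
\[
\int_{\Gamma\backslash\H^3} \sum_{\sigma \in \Gamma'_\infty\backslash\Gamma}\psi\big(\Im\,\sigma(z+\lambda j)\big)\,dV \, = \, \int_{\Gamma'_\infty\backslash\H^3}\psi(\lambda)\,\frac{dx\,dy\,d\lambda}{\lambda^3}.
\]

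Finally I would evaluate the unfolded integral. Since $\Gamma'_\infty$ acts by the translations $z\mapsto z+\omega$, $\omega\in\Z[i]$, a fundamental domain is represented by $(z,\lambda)$ with $z$ ranging over a fundamental parallelogram for $\Z[i]$ (of covolume $1$) and $\lambda>0$, and $\Im$ is invariant under these translations. Integrating out $x,y$ contributes the factor $\mathrm{covol}(\Z[i])=1$, so what remains is $\int_0^\infty \psi(\lambda)\,\lambda^{-2}\,\tfrac{d\lambda}{\lambda}$, which is precisely $H(2)$ by \eqref{H}.

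The computation is essentially routine; the only point requiring care is the unfolding step, namely justifying the interchange of summation and integration and verifying that no index $[\Gamma_\infty:\Gamma'_\infty]$ intrudes --- it does not, because the series is already summed over $\Gamma'_\infty\backslash\Gamma$ and therefore unfolds directly against $\Gamma'_\infty$. The compact support of $\psi$ in $(0,\infty)$ guarantees the absolute convergence needed for the interchange, so this presents no genuine obstacle.
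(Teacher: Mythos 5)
Your proposal is correct and follows essentially the same argument as the paper: specialize Definition \ref{DIES} to $l=a=b=0$ (so $D^0_{00}\equiv 1$ and the twist disappears), unfold the sum over $\Gamma'_\infty\backslash\Gamma$ against the integral, and evaluate in Iwasawa coordinates using $\int_{\SU(2)}dk=1$ and $\mathrm{covol}(\Z[i])=1$, giving $\int_0^\infty \psi(\lambda)\,\frac{d\lambda}{\lambda^3}=H(2)$. The only difference is the order of operations: the paper unfolds directly on $G$, where $\Gamma$ acts freely so the unfolding is exact, and only then integrates out the fiber, whereas you collapse the $\SU(2)$-fiber first and unfold on $\H^3$ --- a route where, strictly speaking, $-I\in\Gamma\setminus\Gamma'_\infty$ makes the cosets $\Gamma'_\infty\backslash\Gamma$ double-count isometries of $\H^3$ while the fiber of $\Gamma\backslash G\rightarrow\Gamma\backslash\H^3$ is really $\SU(2)/\{\pm I\}$, two compensating factors of $2$ that your intermediate identities silently absorb.
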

\begin{proof}
We have the following equivalences
\begin{align}
  \int_{\Gamma \backslash G} F_{00}^0 (\psi) (g) \, d g  & \, = \, \int_{\Gamma \backslash G}   \sum_{\sigma \,\in \,_{\scriptstyle\Gamma'_\infty}\backslash\Gamma}   \overline{ D_{00}^0 \big(  \Phi_{ T( \sigma g )^{-1}} \big)} \; \psi \big( \Im \, \sigma g ( j ) \big) \, d g   &&   \textrm{by definition (\ref{DIES})}  \nonumber   \\
  & \, = \,   \int_{\Gamma'_\infty \backslash G}     \psi \big( \Im \,  g ( j ) \big) \, d g  \, = \,    \int_0^\infty    \psi ( \lambda ) \, \int_{\mathbb{R}^2 /\Gamma'_\infty } dxdy \, \int_{SU(2)}  \, d k \, \frac{d\lambda}{\lambda^3}  &&  \nonumber \\
   & \, = \,     \int_0^\infty    \psi ( \lambda ) \,  \frac{d\lambda}{\lambda^3} 
    \, = \,  H(2).     &&  \textrm{by  (\ref{H})} \nonumber 
  \end{align}


\end{proof}

\begin{lemm} \label{LFC} The following identity is valid
$$  \sum_{0 \neq  w \in Z[i]}  c(w) \cdot \abs{w}^s \cdot  \bigg( \frac{ w }{\abs{w}}  \bigg)^{\alpha} \cdot \sigma_{\nu} (w,0) \, = \,  \frac{1}{4}
 \, \frac{L(-\tfrac{s}{2}, F_{pq}^l, \chi_\alpha ) \, L(-\tfrac{s}{2}-\nu, F_{pq}^l, \chi_\alpha ) }{ L( -s-\nu, \chi_{2\alpha}) } .   $$
\end{lemm}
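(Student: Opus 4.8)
The plan is to prove the identity by inserting the twisted divisor function, recognising the left-hand side as a Rankin--Selberg-type convolution, and factoring the resulting Dirichlet series into an Euler product over the prime ideals of $\Z[i]$, in complete parallel with the Ramanujan-type identity (\ref{IES115}). The only structural difference is that one of the two divisor generating series appearing in (\ref{IES115}) is now replaced by the degree-two local factor attached to the Hecke eigenform $F_{p,q}^l$, so the cusp-form $L$-functions $L(\,\cdot\,,F_{p,q}^l,\chi_\alpha)$ take the place of two of the Hecke $L$-functions there.

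First I would substitute the definition (\ref{sigma_nu}) of $\sigma_\nu(w,0)$ and group the sum over nonzero $w\in\Z[i]$ according to the ideal $(w)$. Since $\chi_\alpha((w))=(w/|w|)^\alpha$ is a Hecke character and the whole summand is invariant under the four units of $\Z[i]$, the left-hand side becomes, up to an explicit constant, a Dirichlet series $\sum_{\mathfrak a}c(\mathfrak a)\,\chi_\alpha(\mathfrak a)\,\sigma_\nu(\mathfrak a)\,N(\mathfrak a)^{s/2}$ over integral ideals, with the multiplicative divisor function $\sigma_\nu(\mathfrak a)=\sum_{\mathfrak d\mid\mathfrak a}N(\mathfrak d)^{\nu}$.

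Next I would factor this series as an Euler product. Because the coefficients $c$ are Hecke-multiplicative, the degree-two Euler product for $L(s,F_{p,q}^l,\chi_\alpha)$ gives $c(\mathfrak p^k)\chi_\alpha(\mathfrak p)^k=\tfrac{\gamma^{k+1}-\delta^{k+1}}{\gamma-\delta}$ at each prime $\mathfrak p$, where $\gamma+\delta=c(\mathfrak p)\chi_\alpha(\mathfrak p)$ and $\gamma\delta=\chi_\alpha(\mathfrak p)^2=\chi_{2\alpha}(\mathfrak p)$, while the divisor function gives $\sigma_\nu(\mathfrak p^k)=\tfrac{R^{k+1}-1}{R-1}$ with $R=N(\mathfrak p)^{\nu}$. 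Writing $z=N(\mathfrak p)^{s/2}$, the crux of the proof is the symmetric-function identity
\[
\sum_{k\ge 0}\frac{\gamma^{k+1}-\delta^{k+1}}{\gamma-\delta}\,\frac{R^{k+1}-1}{R-1}\,z^{k}=\frac{1-\gamma\delta R z^{2}}{(1-\gamma z)(1-\delta z)(1-\gamma R z)(1-\delta R z)},
\]
which I would verify by partial fractions (equivalently, it is the local form of the Rankin--Selberg unfolding). The four denominator factors are exactly the local factors of $L(-s/2,F_{p,q}^l,\chi_\alpha)$ and of $L(-s/2-\nu,F_{p,q}^l,\chi_\alpha)$, while the numerator $1-\gamma\delta R z^{2}=1-\chi_{2\alpha}(\mathfrak p)\,N(\mathfrak p)^{s+\nu}$ is the reciprocal of the local factor of $L(-s-\nu,\chi_{2\alpha})$; taking the product over all $\mathfrak p$ then produces the claimed ratio of $L$-functions.

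The step I expect to be the main obstacle is the bookkeeping of the character and of the constant. One must check that the locally produced quadratic numerator matches $L(-s-\nu,\chi_{2\alpha})^{-1}$ precisely through the relation $\gamma\delta=\chi_\alpha(\mathfrak p)^2=\chi_{2\alpha}(\mathfrak p)$, and one must keep simultaneous track of the normalising $\tfrac14$ in (\ref{sigma_nu}) and of the factor arising from the $|\mathcal{O}^{\times}|=4$ units of $\Z[i]$ when passing between the sum over elements $w$ and the Euler product over ideals; together these account for the overall constant $\tfrac14$. All the interchanges of summation and the Euler factorisation are carried out in the common half-plane of absolute convergence, where they are legitimate, and the identity then extends by analytic continuation.
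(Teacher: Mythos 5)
Your proposal is correct and takes essentially the same route as the paper: the paper likewise factors the sum into an Euler product over the Gaussian primes and evaluates each local factor by combining the degree-two Hecke relation for $\sum_j c(p^j)x^j$ with the finite geometric sum $\sigma_\nu(p^j,0)=\tfrac14\,\frac{1-\abs{p}^{2\nu(j+1)}}{1-\abs{p}^{2\nu}}$, obtaining the two cusp-form local factors in the denominator and the inverse local factor of $L(-s-\nu,\chi_{2\alpha})$ in the numerator. The only difference is organizational: you diagonalize with Satake parameters $\gamma,\delta$ and quote the closed four-factor Rankin--Selberg identity, while the paper splits the divisor sum into two geometric pieces, sums the two resulting Hecke series separately, and recombines over a common denominator --- the same algebra in a different order.
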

\begin{proof}

Let $$R(s)  :=  \sum_{0 \neq  w \in Z[i]}  c(w) \cdot \abs{w}^s \cdot  \bigg( \frac{ w }{\abs{w}}  \bigg)^{\alpha} \cdot \sigma_{\nu} (w,0)  = \prod_{ \substack{ p \in Z[i] \\  p  \text{ primo}  } }  R_p (s),$$
with
$$ R_p (s) \, = \, \sum_{j=0}^\infty   c(p^j) \cdot \abs{p^j}^s \cdot  \bigg( \frac{ p^j }{\abs{p^j}}  \bigg)^{\alpha} \cdot \sigma_{\nu} (p^j,0). $$

\bigskip
We observed that
$$ \sigma_{\nu} (p^j,0) \, = \, \frac{1}{4} \; \sum_{t=0}^j  \abs{p^t}^{2 \nu}  \, = \, \frac{1}{4} \; \sum_{t=0}^j \Big( \abs{p}^{2\nu}  \Big)^{t} 
\, = \, \frac{1}{4} \cdot \frac{1-\abs{p}^{2\nu(j+1)}}{1-\abs{p}^{2\nu}}.  $$

Therefore,  
 $$  R_p (s) \, = \,  \frac{1}{4} \; \sum_{j=0}^\infty   c(p^j) \cdot \abs{p}^{js} \cdot  \frac{ p^{j\alpha} }{\abs{p}^{j\alpha}}  \cdot \frac{1-\abs{p}^{2\nu(j+1)}}{1-\abs{p}^{2\nu}} \, = \,  \frac{1}{4} \, \frac{ 1}{ 1-\abs{p}^{2\nu} }  \; \sum_{j=0}^\infty   c(p^j) \cdot  \frac{ p^{j\alpha} }{\abs{p}^{j(-s+\alpha)}}  \cdot \Big( 1-\abs{p}^{2\nu(j+1)} \Big) $$

$$ = \, \frac{1}{4} \,  \frac{ 1}{ 1-\abs{p}^{2\nu} }  \; \Bigg[ \sum_{j=0}^\infty  c(p^j) \Big( p^{\alpha} \cdot \abs{p}^{s-\alpha} \Big)^j \; - \; \abs{p}^{2\nu} \; \sum_{j=0}^\infty c(p^j)  \Big(  p^{\alpha} \cdot \abs{p}^{s-\alpha+2\nu} \Big)^j \Bigg] $$

$$ = \, \frac{1}{4} \, \frac{1}{ 1-\abs{p}^{2\nu} }  \; \Bigg[
\frac{1}{1 - c(p)\cdot p^{\alpha} \, \abs{p}^{s-\alpha} + 
 p^{2\alpha} \, \abs{p}^{2s-2\alpha}} \; - \; \frac{\abs{p}^{2\nu}}{1 - c(p) \cdot p^{\alpha} \, \abs{p}^{s-\alpha+2\nu} + 
 p^{2\alpha} \, \abs{p}^{2s-2\alpha+4\nu}} \Bigg] $$



$$ = \,  \frac{1}{4} \, 
\frac{  1 - p^{2\alpha} \, \abs{p}^{2s-2\alpha+2\nu}   }{ \big( 1 - c(p)\cdot p^{\alpha} \, \abs{p}^{s-\alpha} +  p^{2\alpha} \, \abs{p}^{2s-2\alpha} \big) \, \big( 1 - c(p) \cdot p^{\alpha} \, \abs{p}^{s-\alpha+2\nu} + 
 p^{2\alpha} \, \abs{p}^{2s-2\alpha+4\nu} \big) }.
 $$

\end{proof}

\bigskip
\subsection*{Acknowledgements}
The author was supported by CINVESTAV with a posdoctoral scholarship, via FORDECYT 265667 project. I want to  thank to Juan Manuel Burgos and  Jacob Mostovoy for the support to get that scholarship.


\end{document}